\documentclass[11pt]{smfart}
\usepackage{color}
\usepackage{amssymb,verbatim}
\usepackage{hyperref}
\usepackage{amsthm,array,amssymb,amscd,amsfonts,amssymb,latexsym, url}
\usepackage{amsmath}
\usepackage[all]{xy}
\usepackage[french]{babel}

\usepackage{cyrillic}

\newcounter{spec}
{\end{list}}

\renewcommand{\P}{{\mathbf P}}

\newcommand{\N}{{\mathbb N}}
\newcommand{\Z}{{\mathbb Z}}
\newcommand{\Q}{{\mathbb Q}}
\newcommand{\C}{{\mathbb C}}
\newcommand{\R}{{\mathbb R}}

\newcommand{\Br}{{\operatorname{Br  }}}

\newcommand{\Spec}{{\operatorname{Spec \ }}}

\newcommand{\Norm}{{\operatorname{Norm}}}

\renewcommand{\lim}{\varprojlim}

\numberwithin{equation}{section}

\newfont{\gothic}{eufb10}


\renewcommand{\qed}{{\hfill$\square$}}

\newtheorem{theo}{Th\'{e}or\`{e}me}[section]
\newtheorem{prop}[theo]{Proposition}

\newtheorem{lem}[theo]{Lemme}
\newtheorem{cor}[theo]{Corollaire}
\theoremstyle{definition}
\newtheorem{defi}[theo]{D\'efinition}
\theoremstyle{remark}
\newtheorem{rema}[theo]{Remarque}

\newtheorem{ex}[theo]{Exemple}

\theoremstyle{definition}

\newcommand{\bthe}{\begin{theo}}
\newcommand{\ble}{\begin{lem}}
\newcommand{\bpr}{\begin{prop}}
\newcommand{\bco}{\begin{cor}}
\newcommand{\bde}{\begin{defi}}
\newcommand{\ethe}{\end{theo}}
\newcommand{\ele}{\end{lem}}
\newcommand{\epr}{\end{prop}}
\newcommand{\eco}{\end{cor}}
\newcommand{\ede}{\end{defi}}

\newcommand{\Pic}{\operatorname{Pic}}
\newcommand{\Div}{\operatorname{Div}}

\newcommand{\F}{{\mathbb F}}

\newcommand{\G}{{\mathbb G}}

%

\DeclareFontFamily{U}{wncy}{}
\DeclareFontShape{U}{wncy}{m}{n}{%
<5>wncyr5%
<6>wncyr6%
<7>wncyr7%
<8>wncyr8%
<9>wncyr9%
<10>wncyr10%
<11>wncyr10%
<12>wncyr6%
<14>wncyr7%
<17>wncyr8%
<20>wncyr10%
<25>wncyr10}{}
\DeclareMathAlphabet{\cyr}{U}{wncy}{m}{n}


 \begin{document}

  \title[Hypersurfaces quartiques de dimension 3]
 {Hypersurfaces quartiques   de dimension 3 : Non rationalit\'e stable}

\author{J.-L. Colliot-Th\'el\`ene}
\address{C.N.R.S., Universit\'e Paris Sud\\Math\'ematiques, B\^atiment 425\\91405 Orsay Cedex\\France}
\email{jlct@math.u-psud.fr}
\author{A. Pirutka}
\address{C.N.R.S., \'Ecole Polytechnique, CMLS,  91128 Palaiseau\\France}
\email{alena.pirutka@polytechnique.edu}

\date{\`A para\^{\i}tre dans  Ann. Sc. \'Ec. Norm. Sup.;
soumis le 27 mars 2014;  rapports re\c cus  par les auteurs le 12 novembre 2014;
version r\'evis\'ee soumise le 13 janvier  2015;  article accept\'e le 17 mars 2015.}
\maketitle

  \begin{abstract}
Inspir\'es par un argument de C. Voisin, nous montrons l'existence d'hypersurfaces quartiques lisses de dimension 3 sur les complexes qui ne sont pas stablement rationnelles, plus pr\'ecis\'ement dont le groupe de Chow de degr\'e z\'ero n'est pas universellement \'egal \`a  $\Z$.
La m\'ethode de sp\'ecialisation adopt\'ee ici permet de construire des exemples d\'efinis sur un corps de nombres.
  \end{abstract}

  \begin{altabstract}  There are (many) smooth quartic threefolds over the complex field which are not stably rational. More precisely, their degree zero Chow group is not universally equal to $\Z$. The proof uses a variation of a method due to C. Voisin. The specialisation argument we use yields
examples defined over a number field.
  \end{altabstract}

\section*{Introduction}

Soit $X \subset \P^4_{\C}$ une hypersurface quartique lisse.
Dans \cite{iskovkikhmanin}, Iskovskikh et Manin  montrent que tout automorphisme birationnel de $X$
est un automorphisme, ce qui implique que le groupe des automorphismes birationnels est fini
et que la vari\'et\'e de Fano $X$ n'est pas rationnelle. Des choix convenables de $X$ donnent alors
des contre-exemples
au th\'eor\`eme de L\"{u}roth pour les solides.

 Cette m\'ethode, dite de rigidit\'e 
 birationnelle, 
 a depuis \'et\'e fort d\'evelopp\'ee.
 Elle ne permet pas de r\'epondre \`a la question de la rationalit\'e
stable de ces vari\'et\'es, que l'on trouve pos\'ee explicitement
dans  \cite{huh}.

Artin et Mumford \cite{artinmumford} construisirent d'autres exemples de 
solides $X/\C $ projectifs et lisses   qui sont unirationnels mais non rationnels.
L'invariant qu'ils utilis\`erent est le sous-groupe de torsion $H^3(X,\Z)_{tors}$  
du troisi\`eme groupe de cohomologie de Betti,
isomorphe pour un solide projectif et lisse  au groupe  $H^4(X,\Z)_{tors}$.
Pour toute  vari\'et\'e $X/\C$  projective et lisse
 rationnellement connexe,
 le  groupe $H^3(X,\Z)_{tors}$ est isomorphe \`a un autre
invariant birationnel, le groupe de Brauer $\Br (X)$. Ce groupe est nul pour toute vari\'et\'e $X/\C$ 
stablement rationnelle, et m\^eme pour toute vari\'et\'e r\'etracte rationnelle.
Leurs exemples ne sont donc pas stablement rationnels.
La m\'ethode ne peut s'appliquer directement aux vari\'et\'es  intersections compl\`etes lisses de dimension
au moins 3 dans un espace projectif $\P^n_{\C}$, car
le groupe de Brauer de telles vari\'et\'es  est nul.

Dans un r\'ecent article \cite{voisin}, C. Voisin a montr\'e qu'un solide lisse rev\^etement double
de $\P^3_{\C}$  ramifi\'e le long
d'une  surface quartique lisse tr\`es  g\'en\'erale n'est pas stablement rationnel. Elle utilise
une famille propre $f : X \to B$ de vari\'et\'es, de base une courbe $B$ lisse, d'espace total une vari\'et\'e lisse $X$,
dont une fibre sp\'eciale $Y$ est   un solide d'Artin-Mumford, de d\'esingularisation $Z\to Y$. 
 Utilisant le fait que le solide $Y$ n'a que des singularit\'es quadratiques ordinaires, par un argument
 de sp\'ecialisation, elle montre
 que si une fibre tr\`es g\'en\'erale de $f$ admettait une d\'ecomposition de Chow de la diagonale,
 alors il en serait de m\^eme pour la vari\'et\'e lisse $Z$. Ceci impliquerait que
 la torsion du groupe $H^3(Z,\Z)$, 
est nulle, ce qui
d'apr\`es Artin et Mumford n'est pas le cas.
Ainsi une fibre tr\`es g\'en\'erale  de $f$ n'est pas stablement rationnelle.

  Dans le pr\'esent article, nous montrons qu'une hypersurface quartique tr\`es g\'en\'erale   dans $\P^4_{\C}$
  n'est pas stablement rationnelle. Pour ce faire, nous rel\^achons les hypoth\`eses dans la m\'ethode de C. Voisin.
  D'une part nous autorisons l'espace total $X$ \`a ne pas \^{e}tre lisse, d'autre part nous
  rel\^{a}chons l'hypoth\`ese sur le diviseur exceptionnel d'une r\'esolution des singularit\'es $Z \to Y$.
  Que l'on puisse un peu rel\^acher cette derni\`ere hypoth\`ese est d\'ej\`a mentionn\'e dans  \cite[Remarque 1.2]{voisin}.

Nous donnons deux versions assez diff\'erentes de l'argument de sp\'ecialisation, l'un purement en termes de groupes
de Chow des z\'ero-cycles (\S 1), l'autre, essentiellement celui de Claire Voisin \cite{voisin},  en termes de correspondances (\S 2).
Un point essentiel de notre d\'emonstration utilise  l'homomorphisme de sp\'ecialisation de Fulton,
qui existe sous des hypoth\`eses tr\`es larges.

  Nous exhibons une hypersurface quartique singuli\`ere  $Y$ birationnelle \`a un solide d'Artin-Mumford,
dont nous construisons une r\'esolution des singularit\'es $Z \to Y$. Nous avons rel\'egu\'e cette construction 
\`a l'appendice A. 
Nous montrons que le diviseur exceptionnel remplit les conditions suffisantes
d\'egag\'ees aux paragraphes pr\'ec\'edents pour faire fonctionner la m\'ethode de sp\'ecialisation.

Le r\'esultat de sp\'ecialisation de z\'ero-cycles (\S 1)  montre   qu'une d\'eformation  g\'en\'erique
 de cette hypersurface quartique $Z$  n'est pas g\'eom\'etriquement stablement rationnelle,
 ni m\^eme r\'etracte rationnelle.

 Le  point de vue des correspondances (\S 2) \'etablit
 la non rationalit\'e stable pour les hypersurfaces quartiques ``tr\`es g\'en\'erales'' sur le corps des complexes.

Le point de vue ``groupe de Chow de z\'ero-cycles'' (\S 1)  \'etablit l'existence d'hypersurfaces quartiques
non stablement rationnelles d\'efinies sur une  cl\^oture alg\'ebrique de $\Q(t)$, et  montre  que
les param\`etres de telles hypersurfaces sont denses pour la topologie de Zariski sur l'espace projectif
param\'etrant ces vari\'et\'es (Th\'eor\`eme \ref{quartiques1}).
  En utilisant des sp\'ecialisations sur un corps fini, on peut m\^eme
  comme nous l'a obligeamment indiqu\'e O.~Wittenberg,   \'etablir  l'existence de telles 
hypersurfaces d\'efinies sur la cl\^oture alg\'ebrique de $\Q$ (Th\'eor\`eme \ref{reducmodp}).

Un exemple de quartique singuli\`ere  $Y$ dans $\P^4_{\C}$ avec une r\'esolution des singularit\'es
$Z \to Y$ satisfaisant les deux conditions : la torsion de 
$H^4(Z,\Z)$ est non nulle, et le diviseur
exceptionnel $E$ satisfait les conditions 
suffisantes mentionn\'ees
ci-dessus, avait d\'ej\`a \'et\'e construit par
J. Huh \cite{huh}. Pour l'exemple que nous construisons,
 point  n'est  
besoin de calculer  la torsion de 
$H^4(Z,\Z)$  : il suffit de renvoyer \`a l'article
d'Artin et Mumford, ou au calcul birationnel du groupe de Brauer de $Z$ \cite[Exemple 2.5]{CTOj}.

Le formalisme du \S 1 permet aussi d'\'etablir la non rationalit\'e stable, sur leur corps
de d\'efinition, de certaines vari\'et\'es. Pour $k$ un corps $p$-adique, ou un corps de nombres,
nous   montrons ainsi l'existence d'hypersurfaces
cubiques lisses de dimension 3 d\'efinies sur $k$ 
et qui ne sont pas stablement $k$-rationnelles (Th\'eor\`eme \ref{padiquestable}).

\bigskip

Soit $k$ un corps.
Une $k$-vari\'et\'e est un $k$-sch\'ema s\'epar\'e de type fini. Une $k$-vari\'et\'e int\`egre
est dite $k$-rationnelle si elle est $k$-birationnelle \`a un espace projectif $\P^n_{k}$.
Une $k$-vari\'et\'e int\`egre $X$ est dite stablement $k$-rationnelle s'il existe des espaces projectifs
$\P^n_{k}$ et $\P^m_{k}$ tels que $X \times_{k}\P^n_{k}$ est $k$-birationnel \`a $\P^m_{k}$.
Une $k$-vari\'et\'e int\`egre $X$ est dite r\'etracte rationnelle 
s'il existe des ouverts de Zariski non vides $U \subset X$ et $V \subset \P^m_{k}$ ($m$ convenable),
 et des $k$-morphismes $ f : U \to V$ et $g : V \to U$ tels que le compos\'e $g \circ f$ est l'identit\'e de $U$.
Une $k$-vari\'et\'e int\`egre stablement $k$-rationnelle est r\'etracte rationnelle.

Soit $X$ une $k$-vari\'et\'e projective  int\`egre. On dit qu'un $k$-morphisme 
$Z \to X$ est une d\'esingularisation de $X$ si $Z$ est une $k$-vari\'et\'e projective
lisse int\`egre et le morphisme $f$ est $k$-birationnel, c'est-\`a-dire qu'il induit un
isomorphisme $k(X) \stackrel{\sim}{\to} k(Z)$.

\section{Groupe de Chow des z\'ero-cycles et sp\'ecialisations}\label{algebrique}

 Soit $k$ un corps.

\defi{
On dit qu'un $k$-morphisme propre $f : X \to Y$ de $k$-vari\'et\'es est
 universellement
  $CH_{0}$-trivial si, pour tout corps $F$
contenant $k$, l'application induite $f_{*}: CH_{0}(X_{F}) \to CH_{0}(Y_{F})$  sur les groupes de Chow de z\'ero-cycles 
est un isomorphisme.}

\bigskip

Dans le cas particulier du morphisme structural d'une $k$-vari\'et\'e, on a la d\'efinition suivante.

\defi{On dit qu'une $k$-vari\'et\'e propre $X$ est 
  universellement
$CH_{0}$-triviale si son groupe de Chow de degr\'e z\'ero
est universellement \'egal \`a $\Z$, c'est-\`a-dire si,
 pour tout corps $F$
contenant $k$,  l'application degr\'e $deg_{F} : CH_{0}(X_{F}) \to \Z$ est un isomorphisme.}

\medskip

 Une telle $k$-vari\'et\'e $X$   est g\'eom\'etriquement connexe et  poss\`ede un z\'ero-cycle de degr\'e 1 sur le corps $k$.

\begin{ex} Soient $X_{i} \subset \P^n_{k}$, $i=1,2$, deux $k$-vari\'et\'es ferm\'ees universellement $CH_{0}$-triviales.
Si $X_{1} \cap X_{2}$ contient un point rationnel ou plus g\'en\'eralement un z\'ero-cycle de degr\'e 1,
alors la $k$-vari\'et\'e $X:=X_{1}\cup X_{2}$ est universellement $CH_{0}$-triviale.
\end{ex}

\begin{prop}\label{equivalencedeuxpointsdevue}
Soit   $X$ une $k$-vari\'et\'e propre,   lisse, g\'eom\'etriquement int\`egre de dimension $n$.

Soit $K$ le corps des fonctions rationnelles de $X$.
Les conditions suivantes sont \'equivalentes :

 (i) La $k$-vari\'et\'e   $X$ est  universellement $CH_{0}$-triviale.

(ii) La $k$-vari\'et\'e $X$ poss\`ede un     z\'ero-cycle  de degr\'e 1,
et  la fl\`eche $deg_{K} : CH_{0}(X_{K}) \to \Z$ est un isomorphisme.

(iii)  Il existe une sous-vari\'et\'e ferm\'ee $D \subset X$ de codimension 1,
   un  z\'ero-cycle $z_{0}$ de degr\'e 1 sur $X$
 et
 un cycle $Z \in Z_{n}(X\times X)$ support\'e dans $D \times X$, tels que le cycle
 $$\Delta_{X} - Z - X\times z_{0} \in Z_{n}(X\times X)$$
  ait une classe nulle dans $CH_{n}(X\times X)$.

  Si ces conditions sont satisfaites, 
  la propri\'et\'e (iii) vaut en y rempla\c cant
 $z_{0}$ par tout autre z\'ero-cycle de degr\'e 1.

   \end{prop}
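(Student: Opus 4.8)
The plan is to establish the cycle of implications (i)$\Rightarrow$(ii)$\Rightarrow$(iii)$\Rightarrow$(i), and then to obtain the final assertion by rerunning the argument for (ii)$\Rightarrow$(iii). Throughout, write $p_1,p_2\colon X\times X\to X$ for the two projections, $K=k(X)$, and $\eta\in X(K)$ for the canonical point attached to the generic point of $X$ (the ``diagonal'' point). The implication (i)$\Rightarrow$(ii) is immediate: specializing the definition of universal $CH_0$-triviality to $F=k$ yields, by surjectivity of $\deg$, a zero-cycle of degree $1$, and to $F=K$ yields that $\deg\colon CH_0(X_K)\to\Z$ is an isomorphism.

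For (ii)$\Rightarrow$(iii), I would use the description of zero-cycles on the generic fibre of $p_1$. That generic fibre is $X_K$, and continuity of Chow groups together with the localization sequence identify $CH_0(X_K)=\colim_U CH_n(U\times X)$, the colimit running over the dense open sets $U\subseteq X$. Under this identification the image of $[\Delta_X]$ is the class $[\eta]$, while the image of $[X\times z_0]$ is the class $[z_{0,K}]$ of the base change of $z_0$; both have degree $1$. Since $\deg$ is injective on $CH_0(X_K)$ by (ii), we get $[\eta]=[z_{0,K}]$, i.e. $[\Delta_X]-[X\times z_0]$ dies in the colimit. Hence it already vanishes in $CH_n(U\times X)$ for some dense open $U$; setting $D:=X\setminus U$ (which we may take of pure codimension $1$) and using exactness of
\[
CH_n(D\times X)\longrightarrow CH_n(X\times X)\longrightarrow CH_n(U\times X)\longrightarrow 0,
\]
we find a cycle $Z$ supported on $D\times X$ with $[\Delta_X-Z-X\times z_0]=0$ in $CH_n(X\times X)$, which is (iii).

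The substantive step is (iii)$\Rightarrow$(i), where I would let the diagonal relation act as a correspondence after base change to an arbitrary field $F\supseteq k$. For $\alpha\in CH_0(X_F)$ one has $(\Delta_{X_F})_*\alpha=\alpha$ and $(X_F\times z_{0,F})_*\alpha=\deg(\alpha)\,z_{0,F}$, so the relation gives $\alpha=\deg(\alpha)\,z_{0,F}+(Z_F)_*\alpha$. Surjectivity of $\deg$ is clear from $z_{0,F}$, so the whole point is to show $(Z_F)_*\alpha=0$ when $\deg(\alpha)=0$. Here I would invoke the moving lemma for zero-cycles on the smooth variety $X_F$ to replace $\alpha$ by a rationally equivalent cycle whose support is disjoint from $D_F$; then $p_1^*\alpha$ and $Z_F$ have disjoint supports (contained in $\mathrm{supp}(\alpha)\times X_F$ and $D_F\times X_F$ respectively), so their intersection product vanishes and $(Z_F)_*\alpha=p_{2*}(p_1^*\alpha\cdot[Z_F])=0$. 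Thus $\alpha=0$, giving injectivity of $\deg$ and hence (i). I expect this moving step to be the main obstacle: the naive hope that $(Z_F)_*$ vanishes for support- or dimension-theoretic reasons alone fails, because of excess intersection when $\mathrm{supp}(\alpha)$ meets $D_F$; one genuinely needs to move $\alpha$ off $D_F$, and one must ensure the moving lemma for zero-cycles on a smooth variety is available over every field $F$ that occurs.

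Finally, assume the equivalent conditions hold and let $z_0'$ be any other degree-$1$ cycle. Since (i) holds, $CH_0(X_K)\cong\Z$, so $[z_{0,K}']=[\eta]$ as well; rerunning the localization argument of (ii)$\Rightarrow$(iii) verbatim with $z_0'$ in place of $z_0$ produces a closed set $D'$ of codimension $1$ and a cycle $Z'$ supported on $D'\times X$ with $[\Delta_X-Z'-X\times z_0']=0$. This yields the last assertion.
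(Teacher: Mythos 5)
Your proof is correct and is essentially the argument the paper relies on: the paper's ``proof'' is just a citation of \cite[Lemma 1.3]{ACTP}, whose argument is precisely yours --- (i)$\Rightarrow$(ii) by specialisation of $F$, (ii)$\Rightarrow$(iii) by the identification $CH_{0}(X_{K})=\colim_{U}CH_{n}(U\times X)$ plus the localization sequence, and (iii)$\Rightarrow$(i) by the correspondence action on $CH_{0}(X_{F})$ after moving the zero-cycle off $D_{F}$. The moving-lemma worry you flag is genuine but is covered by the references the paper itself uses elsewhere (\cite[Cor. 6.7]{GLL} over an arbitrary field), so there is no gap.
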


   \begin{proof}
 Voir \cite[Lemma 1.3]{ACTP}),
   qui utilise la th\'eorie des correspondances sur les vari\'et\'es
 propres et lisses sur un corps  \cite[Chap. 16]{fulton}.
\end{proof}

Dans la situation du point (iii) de la proposition, on dit que l'on a {\it une d\'ecomposition de Chow de la diagonale de la $k$-vari\'et\'e $X$}.

\begin{lem}\label{retractCh0trivial}
Soit $X$ une $k$-vari\'et\'e int\`egre projective et lisse. Si la $k$-vari\'et\'e $X$
est r\'etracte rationnelle, c'est une $k$-vari\'et\'e
 universellement
 $CH_{0}$-triviale.
\end{lem}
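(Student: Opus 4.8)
The plan is to prove that a smooth integral projective retract rational $k$-variety $X$ is universally $CH_0$-trivial. The cleanest route is to verify criterion (ii) of Proposition \ref{equivalencedeuxpointsdevue}: I must show $X$ admits a zero-cycle of degree $1$, and that $\deg_K : CH_0(X_K) \to \Z$ is an isomorphism for $K = k(X)$. Since these properties are themselves preserved under the kind of retraction data we are given — indeed retract rationality is inherited by all base field extensions $F/k$ — it will in fact suffice to establish universal $CH_0$-triviality directly.

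Let me recall the setup. By hypothesis there are nonempty Zariski opens $U \subset X$ and $V \subset \P^m_k$ together with $k$-morphisms $f : U \to V$ and $g : V \to U$ whose composite $g \circ f$ is $\mathrm{id}_U$. The key point is that this retraction datum base-changes: for any field $F$ containing $k$, we obtain opens $U_F \subset X_F$, $V_F \subset \P^m_F$ and $F$-morphisms $f_F, g_F$ with $g_F \circ f_F = \mathrm{id}_{U_F}$. So it is enough to fix an arbitrary $F \supset k$ and prove that $\deg_F : CH_0(X_F) \to \Z$ is an isomorphism, treating $F$ as the ground field.

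\emph{First I would handle surjectivity of the degree map.} Since $X_F$ is geometrically integral and smooth, it is enough to produce a zero-cycle of degree $1$; equivalently I want a closed point of $X_F$ whose residue field has degree prime to every prime (or just a degree-$1$ cycle). Here the morphisms transport a rational point: because $V \subset \P^m$ has $\P^m(F)$-points (indeed $V_F$ contains an $F$-point, as $V$ is a nonempty open in projective space over an infinite field, or more carefully one argues using that $\P^m_F$ is universally $CH_0$-trivial so $CH_0(V_F) \to CH_0(\P^m_F) = \Z$ is surjective), we get a zero-cycle on $V_F$ of degree $1$, and its image under $g_F$ gives a degree-$1$ zero-cycle on $U_F \subset X_F$. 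Thus $\deg_F$ is surjective.

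\emph{The main obstacle, and the heart of the argument, is injectivity.} The strategy is the standard retract argument in the Chow group: I would consider the composite map on $CH_0$ of the degree-zero parts. Concretely, from $g_F \circ f_F = \mathrm{id}_{U_F}$ one gets that the endomorphism $(g_F)_* \circ (f_F)_*$ of $CH_0(U_F)$ is the identity, so $(f_F)_* : CH_0(U_F) \to CH_0(V_F)$ is a split injection, with $CH_0(U_F)$ a direct summand of $CH_0(V_F)$. The delicate technical step is to relate $CH_0$ of the opens $U_F, V_F$ to $CH_0$ of the projective models $X_F, \P^m_F$, using the localization exact sequence of Fulton (restriction to an open is surjective on Chow groups, so $CH_0(X_F) \twoheadrightarrow CH_0(U_F)$ and similarly for $V$). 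Now $\P^m_F$ is universally $CH_0$-trivial, hence $CH_0(V_F)$ is ``trivial'' in the sense that its degree-zero part vanishes after passage to the closed model; the retract then forces the degree-zero part of $CH_0(U_F)$, and hence of $CH_0(X_F)$ (the surjection from $X_F$ being degree-preserving), to vanish. The care required lies in tracking degrees through the non-proper morphisms $f_F$, $g_F$ and the open restrictions, since $f_F, g_F$ need not be proper — but because $g \circ f = \mathrm{id}$ the relevant proper pushforward compatibility on the summand is automatic. Combining surjectivity with the injectivity just obtained gives that $\deg_F$ is an isomorphism for every $F \supset k$, which is exactly universal $CH_0$-triviality of $X$.
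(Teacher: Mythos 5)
Your reduction to a fixed field $F \supset k$ and the surjectivity of $\deg_F$ are essentially fine (modulo the finite-field case: a nonempty open of $\P^m$ over a finite field need not have a rational point, which is why the paper treats that case separately by restriction–corestriction). The genuine gap is in your injectivity step, and it is twofold. First, the maps $(f_F)_*$ and $(g_F)_*$ you invoke do not exist: pushforward on Chow groups is only defined for \emph{proper} morphisms, and $f_F$, $g_F$ are morphisms between open varieties. Cycle-level pushforward does not respect rational equivalence for non-proper maps (for instance $[0]\sim 0$ on $\mathbb{A}^1_k$ but not on $\P^1_k$), and the sentence ``because $g\circ f = \mathrm{id}$ the relevant compatibility on the summand is automatic'' is an assertion, not an argument. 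Second, and more fatally, even if one granted $CH_0(U_F)=0$ (note that ``the degree-zero part of $CH_0(U_F)$'' is not meaningful: degree is not a rational-equivalence invariant on non-proper varieties), this does not propagate back to $X_F$. The localization sequence $CH_0(D_F) \to CH_0(X_F) \to CH_0(U_F) \to 0$ goes the wrong way: vanishing of $CH_0(U_F)$ only says that every zero-cycle on $X_F$ is rationally equivalent to one supported on the boundary $D_F$, and nothing controls those classes. Concretely, for $U = E\times \mathbb{A}^1 \subset X = E\times \P^1$ with $E$ an elliptic curve over $\C$, one has $CH_0(U)=0$ while the degree-zero part of $CH_0(X)$ is $E(\C)\neq 0$; so the inference you rely on is false in general.

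What is missing is the mechanism by which properness of $X$ actually enters, and this is exactly what the paper's proof supplies. It never considers Chow groups of the opens: it manufactures rational equivalences on $X$ itself. Given a degree-zero cycle on $X$, move it into $U$ by the moving lemma (here smoothness of $X$ is used), say $z_1 = \sum_P n_P P$. For each closed point $P$, the point $Q=f(P)$ satisfies $k(Q)\cong k(P)$ and $g(Q)=P$; after base change to $F=k(P)$, the point $Q$ lifts to an $F$-point $R$ of $V_F$, and one takes the line $L\subset \P^m_F$ through $R$ and $A_F$, where $A$ is a fixed rational point of $V$. The rational map $g_F|_L : L \dashrightarrow X_F$ extends to a \emph{morphism} $L \to X_F$ precisely because $L$ is a smooth curve and $X_F$ is proper; this exhibits $g_F(R)-g_F(A_F)\sim 0$ on $X_F$, and pushing forward along the proper projection $X_F \to X$ gives $P - [k(P):k]\,g(A) \sim 0$ on $X$. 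Summing over $P$ annihilates $z_1$. This line-through-two-points construction, plus the extension of rational maps from smooth curves into proper varieties, is the idea your proposal lacks; the purely formal ``retract gives a split injection on $CH_0$ of the opens'' argument cannot be repaired within that framework.
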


\begin{proof}  
Par hypoth\`ese, il existe
$U \subset X$ et $V \subset \P^n_{k}$ des ouverts non vides, et $f : U \to V$ et $g : V \to U$
des $k$-morphismes tels que $g \circ f$ soit l'identit\'e de $U$.

Supposons que $V$ poss\`ede un $k$-point $A$, ce qui est certainement le cas si $k$
est infini. Soit $Q$ un point ferm\'e de $V$ tel que l'inclusion naturelle  de corps r\'esiduels $k(g(Q)) \subset k(Q)$ soit
un isomorphisme.
Notons $F= k(Q)$.
Comme $F$ est un quotient de $F \otimes_{k}F$, 
il existe un point $F$-rationnel  $R$ de $V_{F}$ d'image $Q$ par la projection $V_{F} \to V$. 
il existe
une $F$-droite projective  $L \subset \P^n_{F}$ passant par  les points $F$-rationnels $A_{F}$ et $R$.
L'application $g_{F}$ induit une $F$-application rationnelle de la droite $L$ vers $U_{F} \subset X_{F}$
et donc un $F$-morphisme de $L$ vers la $F$-vari\'et\'e propre  $ X_{F}$.  
Ainsi  le z\'ero-cycle $g_{F}(R) - g_{F}(A_{F})$ est rationnellement \'equivalent \`a z\'ero sur $X_{F}$,
et donc  le z\'ero-cycle
$g_{*}(Q) - [F:k] g(A)$, qui est son image par la projection $X_{F} \to X$,
 est rationnellement \'equivalent \`a z\'ero sur $X$.

Soit $z$ un z\'ero-cycle de degr\'e z\'ero sur $X$. Par un lemme de d\'eplacement facile (\cite[Compl\'ement, page 599]{CTmoving} pour $k$ parfait infini,
\cite[Cor. 6.7]{GLL} pour $k$ quelconque),
ce z\'ero-cycle est rationnellement \'equivalent sur $X$ \`a un z\'ero-cycle $z_{1}= \sum_{P}n_{p}P$
de degr\'e z\'ero \`a support dans $U$.

Comme $g\circ f$ est l'identit\'e, les points ferm\'es $P$ et $f(P)$ ont des corps r\'esiduels isomorphes.
Par l'argument pr\'ec\'edent, chaque z\'ero-cycle $$P- [k(P):k]g(A) = g_{*}f_{*}(P) - [k(P):k]g(A)$$
est rationnellement \'equivalent \`a z\'ero sur $X$. Donc
le z\'ero-cycle de degr\'e z\'ero $z_{1} =\sum_{P}n_{p}P$ est rationnellement \'equivalent \`a z\'ero
sur $X$. Il en est donc de m\^eme de $z$ sur $X$.

Le cas d'un corps fini se traite par un argument de corestriction-restriction,
 par extension \`a des extensions finies de degr\'e premi\`eres
entre elles sur lesquelles $V$ poss\`ede un point rationnel.
\end{proof}

\begin{rema}
Comme nous l'avait indiqu\'e A. Merkurjev, ce lemme est aussi  une cons\'equence du fait, d\^{u} \`a M. Rost, que $CH_{0}$ s'\'etend \`a la cat\'egorie 
des correspondances rationnelles. La d\'emonstration \cite[Cor. RC.12]{KM} n'utilise pas
la r\'esolution des singularit\'es.  
\end{rema}

\begin{lem}\label{corpsalgclos}
Soient   $X$ et $Y$ deux $k$-vari\'et\'es g\'eom\'etriquement int\`egres.
S'il existe un corps $L$ contenant $k$ tel que l'une des propri\'et\'es suivantes
est satisfaite :

(i) $X_{L}$ est $L$-birationnelle \`a $Y_{L}$,

(ii) $X_{L}$ est $L$-rationnelle,

(iii) $X_{L}$ est stablement $L$-rationnelle,

(iv) $X_{L}$ est une $L$-vari\'et\'e r\'etracte rationnelle,

\noindent alors   cette propri\'et\'e vaut pour une extension $L$ finie convenable de $k$.
\end{lem}

\begin{proof}
Montrons (i).
 On se ram\`ene au cas $k$ alg\'ebriquement clos
et $L=k(Z)$ est le corps des fonctions d'une $k$-vari\'et\'e int\`egre. Les $k$-vari\'et\'es
$X \times_{k}Z $ et $Y \times_{k}Z$ sont birationnellement \'equivalentes
par une \'equivalence qui respecte la projection sur $Z$. Il existe des ouverts
non vides $U \subset X \times_{k}Z $ et $V \subset Y \times_{k}Z$
qui sont $k$-isomorphes. 
Il existe une extension finie $F$ de $k$ et 
un $F$-point de $Z$
tel que les fibres au-dessus de ce point soient des ouverts non vides
de $X$, resp. de $Y$, et qui soient isomorphes. Ceci \'etablit  l'\'enonc\'e dans le  cas (i), 
lequel implique  imm\'ediatement
l'\'enonc\'e dans
les cas  (ii) et (iii).
La d\'emonstration dans le cas  (iv) est analogue.
    \end{proof}

\begin{prop}\label{chowisolissesing}
Soit $f : Z \to Y$ un $k$-morphisme propre de $k$-vari\'et\'es alg\'ebriques. Les hypoth\`eses 
  suivantes  sont \'equivalentes :

(i) Pour tout point $M$ du sch\'ema $Y$, de corps r\'esiduel $\kappa(M)$,  la fibre $Z_{M}$ est une
$\kappa(M)$-vari\'et\'e
 universellement
 $CH_{0}$-triviale.

(ii) Pour tout corps $F$ contenant $k$ et tout point $M \in Y(F)$, la fibre $Z_{M}$ est une $F$-vari\'et\'e
$CH_{0}$-triviale.

Elles   impliquent  que  le $k$-morphisme $f$  est 
  universellement
$CH_{0}$-trivial.

\end{prop}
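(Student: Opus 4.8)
The plan is to dispatch the equivalence (i)$\Leftrightarrow$(ii) by unwinding definitions, and then to prove the implication by reducing it to a non-universal statement over $k$ which I establish directly from the generators and relations of $CH_0$. For the equivalence: an $F$-point $M\in Y(F)$ is exactly the datum of a scheme-theoretic point $M_0\in Y$ together with an embedding $\kappa(M_0)\hookrightarrow F$, and the fibre of $f$ over $M$ is canonically $Z_{M_0}\times_{\kappa(M_0)}F$. Since saying that $Z_{M_0}$ is universally $CH_0$-trivial over $\kappa(M_0)$ means precisely that $Z_{M_0}\times_{\kappa(M_0)}F'$ is $CH_0$-trivial for every field $F'\supseteq\kappa(M_0)$, letting $F'$ range over all such fields — equivalently, over all $F$-points above $M_0$ — turns (i) into (ii) and back.

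For the implication I would first note that (i) is stable under extension of the base field: if $F\supseteq k$ and $N\in Y_F$ lies over $M\in Y$, the fibre of $f_F$ over $N$ is $Z_M\times_{\kappa(M)}\kappa(N)$, which is universally $CH_0$-trivial over $\kappa(N)$ because universal $CH_0$-triviality is preserved by field extension. Thus $f_F\colon Z_F\to Y_F$ again satisfies (i), and it is enough to prove that (i) forces $f_*\colon CH_0(Z)\to CH_0(Y)$ to be an isomorphism; applying this to every $f_F$ then yields universal $CH_0$-triviality. For this core assertion I only use that $CH_0(Z_y)\by{\deg}\Z$ is an isomorphism for closed points $y\in Y$, and that $\deg\colon CH_0(Z_\eta)\to\Z$ is onto for $\eta$ the generic point of an integral curve in $Y$ — both contained in (i).

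Surjectivity is immediate: for a closed point $y$, the universally $CH_0$-trivial fibre $Z_y$ carries a zero-cycle of degree $1$, whose push-forward is $[y]$, and closed points generate $CH_0(Y)$. Injectivity is the crux. Given $z$ with $f_*(z)\sim 0$, write $f_*(z)=\sum_j\operatorname{div}_{C_j}(g_j)$ for integral curves $C_j\subseteq Y$ and $g_j\in\kappa(C_j)^*$. For each $j$ the generic fibre $Z_{\eta_j}$ carries a degree-$1$ zero-cycle $\sum_i m_i[w_{j,i}]$; taking closures in $f^{-1}(C_j)$ and normalising produces complete integral curves $\widetilde{w}_{j,i}$ with finite maps $\pi_{j,i}\colon\widetilde{w}_{j,i}\to C_j$ of degree $[\kappa(w_{j,i}):\kappa(C_j)]$. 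The cycle $\zeta_j:=\sum_i m_i(\pi'_{j,i})_*\operatorname{div}_{\widetilde{w}_{j,i}}(\pi_{j,i}^*g_j)$, with $\pi'_{j,i}\colon\widetilde{w}_{j,i}\to Z$ the natural map, is rationally trivial on $Z$, while by the projection formula ($N_{\pi_{j,i}}(\pi_{j,i}^*g_j)=g_j^{\deg\pi_{j,i}}$) its image on $Y$ is $(\sum_i m_i\deg\pi_{j,i})\operatorname{div}_{C_j}(g_j)=\operatorname{div}_{C_j}(g_j)$; here properness of $f$ is what makes each $\pi_{j,i}$ finite and allows these push-forwards.

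Setting $z':=z-\sum_j\zeta_j$ then gives $z'\sim z$ on $Z$ with $f_*(z')=0$ as an honest cycle. Grouping $z'$ by its fibres over closed points $y$, each fibre-component is a zero-cycle of degree $0$ on $Z_y$, hence rationally trivial there — and so on $Z$ — because $CH_0(Z_y)\cong\Z$. Summing, $z'\sim 0$, whence $z\sim 0$ and $f_*$ is injective. The step I expect to be the main obstacle is exactly this lifting of the rational equivalences $\operatorname{div}_{C_j}(g_j)$ to $Z$: it is where the hypothesis enters essentially (through the existence of a degree-$1$ cycle on the generic fibre $Z_{\eta_j}$) and where properness together with the norm/projection-formula bookkeeping must be handled with care.
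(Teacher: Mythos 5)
Your proof is correct and takes essentially the same route as the paper's: the same immediate unwinding of (i)$\Leftrightarrow$(ii), the same reduction to $F=k$ via base-change stability of the hypothesis, surjectivity from a degree-$1$ zero-cycle on each closed fibre, and injectivity by writing $f_{*}(z)$ as a sum of divisors $div_{C_{j}}(g_{j})$, using a degree-$1$ zero-cycle on each generic fibre $Z_{\eta_{j}}$ to produce curves in $Z$ of total degree $1$ over $C_{j}$, correcting $z$ by the pulled-back divisors via the projection formula, and concluding fibre-by-fibre over closed points. The only (immaterial) difference is that you compute divisors on normalizations and push forward, whereas the paper works directly with the closures $D_{i}^{j}\subset Z$; the two agree by the very definition of $div$ on a possibly singular curve.
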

\begin{proof}
L'\'equivalence des deux hypoth\`eses est imm\'ediate.
Pour \'etablir l'\'enonc\'e, il suffit de montrer que pour $F=k$, la fl\`eche $f_{*} : CH_{0}(Z) \to CH_{0}(Y)$
est un isomorphisme.
L'hypoth\`ese assure imm\'ediatement que la fl\`eche $f_{*} : CH_{0}(Z) \to CH_{0}(Y)$ est surjective.
Soit $z $ un z\'ero-cycle sur $Z$. Si $f_{*}(z)$ est rationnellement \'equivalent \`a z\'ero sur $Y$, alors
il existe des courbes ferm\'ees int\`egres $C_{i} \subset Y$ et des fonctions rationnelles $g_{i} \in k(C_{i})$
telles que $f_{*}(z) = \sum_{i} div_{C_{i}}(g_{i})$. En appliquant l'hypoth\`ese  au corps des fonctions 
des courbes $C_{i}$,  
on trouve des courbes ferm\'ees  int\`egres $D_{i}^j \subset Z$ en nombre fini telles que  $f$ induise des
 morphismes finis surjectifs $f_{i}^j : D_{i}^j  \to C_{i}$ tels que, pour chaque $i$,
 on ait une \'egalit\'e $\sum_{j}n_{i}^j deg(f_{i}^j)=1,$ avec les $n_{i}^j \in \Z$.
 On note encore $g_{i}$ la fonction rationnelle sur $D_{i}^j$ image r\'eciproque
 par  $f_{i}^j $ de la fonction rationnelle $g_{i}$ sur $C_{i}$.
  Le z\'ero-cycle $z' :=z - \sum_{i} [\sum_{j} n_{i}^j div_{D_{i}^j}(g_{i})]$ sur $Z$  satisfait
 $f_{*}(z') =0 $ comme z\'ero-cycle sur $Y$.
  Il existe donc des points ferm\'es $Q_{j}$ de $Y$
en nombre fini tels que le z\'ero-cycle $z'$ soit sur $Z$ somme de z\'ero-cycles $z_{j} $,
chaque $z_{j}$ \'etant support\'e sur la fibre $Z_{Q_{j}}=f^{-1}(Q_{j})$. L'hypoth\`ese
assure que chacun des $z_{j}$ est rationnellement \'equivalent \`a z\'ero sur $Z_{Q_{j}}$, donc sur $Z$.
Ainsi $z$ est rationnellement \'equivalent \`a z\'ero sur $X$.
La fl\`eche $f_{*} : CH_{0}(Z) \to CH_{0}(Y)$ est donc injective.\end{proof}

\begin{prop}\label{chowunivtrivial}
Soit $f : Z \to Y$ un $k$-morphisme propre birationnel  de $k$-vari\'et\'es alg\'ebriques
projectives  g\'eom\'etriquement int\`egres. Supposons :

  (i) La $k$-vari\'et\'e $Z$ est lisse et poss\`ede un z\'ero-cycle de degr\'e 1.
  
  (ii)  Le $k$-morphisme $f $ est 
  universellement
  $CH_{0}$-trivial.

(iii)
 Il existe un ouvert non vide $U \subset Y$ lisse sur $k$, 
  d'image r\'eciproque $V=f^{-1}(U) \subset Z$
  tel que $f : V \to U$ soit un isomorphisme, et tel que pour tout corps $F$ contenant $k$,  
 tout z\'ero-cycle de degr\'e z\'ero   \`a support dans $U_{F}$ 
     est rationnellement \'equivalent \`a z\'ero sur $Y_{F}$.

  Alors la $k$-vari\'et\'e $Z$ est 
  universellement
  $CH_{0}$-triviale.
  \end{prop}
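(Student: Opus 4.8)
The plan is to establish that for every field $F$ containing $k$ the degree map $\deg_F \colon CH_0(Z_F) \to \Z$ is an isomorphism, which is precisely the definition of universal $CH_0$-triviality. Surjectivity is immediate from hypothesis (i): a zero-cycle of degree $1$ on $Z$ pulls back to a zero-cycle of degree $1$ on $Z_F$. So the entire content lies in injectivity, namely in showing that every zero-cycle $z$ of degree $0$ on $Z_F$ is rationally equivalent to zero on $Z_F$.

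First I would move $z$ into the locus where $f$ is an isomorphism. Since $Z$ is geometrically integral and smooth by hypothesis (i), the base change $Z_F$ is again integral and smooth; and $V := f^{-1}(U)$ is a dense open subset, so its complement in $Z$ has codimension $\geq 1$. By the moving lemma valid over an arbitrary field (\cite[Cor. 6.7]{GLL}), the cycle $z$ is rationally equivalent on $Z_F$ to a zero-cycle $z'$ of degree $0$ whose support lies in $V_F$.

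Then I would transport the problem to $Y$ and feed in hypotheses (ii) and (iii). Because $f$ restricts to an isomorphism $V \iso U$, the pushforward $f_*(z')$ is a zero-cycle of degree $0$ supported in $U_F$; hypothesis (iii) then gives $f_*(z') = 0$ in $CH_0(Y_F)$. By hypothesis (ii) the morphism $f_* \colon CH_0(Z_F) \to CH_0(Y_F)$ is an isomorphism, hence injective, so $z' = 0$ in $CH_0(Z_F)$, and therefore $z = z' = 0$. This yields injectivity of the degree map for every $F$, and hence the conclusion of the proposition.

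I expect the only genuinely delicate point to be the moving step: one must move a zero-cycle away from the codimension-$\geq 1$ closed set $Z \setminus V$ over a completely arbitrary field $F$ (possibly imperfect, possibly finite), which is exactly why the general moving lemma of Gabber--Liu--Lorenzini is invoked rather than a classical version. Once the cycle is supported in $V_F$, the remainder is a formal chase through (ii) and (iii); smoothness of $Z$ intervenes only to license the moving lemma and to ensure that $V \cong U$ carries the cycle isomorphically onto a cycle supported in $U_F$.
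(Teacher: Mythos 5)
Your proof is correct and follows essentially the same route as the paper's: surjectivity of the degree map from (i), the moving lemma to push a degree-zero cycle into $V_F$, hypothesis (iii) to kill its image in $CH_0(Y_F)$, and injectivity of $f_*$ from (ii) to conclude. The only cosmetic difference is that the paper first observes the hypotheses are stable under base change $k \subset F$ and then argues over $k$, whereas you work directly over an arbitrary $F$ (citing the Gabber--Liu--Lorenzini moving lemma, exactly as the paper does elsewhere for the same purpose); the two formulations are equivalent.
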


\begin{proof}
Les hypoth\`eses \'etant invariantes par changement de corps $k \subset F$, il
 suffit d'\'etablir que 
sous les hypoth\`eses ci-dessus
  la fl\`eche  $deg_{k} : CH_{0}(Z) \to \Z$ est un isomorphisme.
  D'apr\`es (i), la fl\`eche est surjective.
 Un lemme de d\'eplacement facile et classique assure que
 tout z\'ero-cycle de degr\'e z\'ero sur  la $k$-vari\'et\'e lisse $Z$
est rationnellement \'equivalent  
\`a un z\'ero-cycle de degr\'e z\'ero dont le support est dans $V$.
D'apr\`es (iii), l'image  du cycle $f_{*}(z)$ dans   $CH_{0}(Y)$ est nulle.
L'hypoth\`ese~(ii)   assure que $f_{*} : CH_{0}(Z) \to CH_{0}(Y)$
est un isomorphisme. Comme cet isomorphisme respecte le degr\'e, on conclut que la fl\`eche
 $deg_{k} : CH_{0}(Z) \to \Z$ est un isomorphisme.  
\end{proof}

\begin{prop}\label{specialisation}
Soit $A$ un anneau de valuation discr\`ete de corps des fractions $K$
et de corps r\'esiduel $k$.
Soit ${\mathcal X}$ un $A$-sch\'ema propre et plat, $X={\mathcal X}\times_{A}K$
la fibre g\'en\'erique 
et $Y={\mathcal X}\times_{A}k$ la fibre sp\'eciale.

(i) On a une application naturelle de sp\'ecialisation
$ CH_{0}(X) \to CH_{0}(Y)$; elle est compatible avec
les applications degr\'e \`a valeurs dans $\Z$.

(ii) Si $A$ est hens\'elien et $X/K$ g\'eom\'etriquement int\`egre
 poss\`ede une d\'esingu\-la\-ri\-sa\-tion $p : \tilde{X} \to X$
telle que la fl\`eche $deg_{K} : CH_{0}(\tilde{X}) \to \Z$ est un isomorphisme,
alors tout z\'ero-cycle de degr\'e z\'ero  de $Y$ \`a support dans le lieu lisse $Y_{lisse}$ de $Y$
  a une   classe  nulle dans $CH_{0}(Y)$.

\end{prop}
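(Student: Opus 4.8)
Pour (i), je suivrais la construction de l'homomorphisme de spécialisation de Fulton \cite[\S 20.3]{fulton}. Soit $\pi$ une uniformisante de $A$, de sorte que la fibre spéciale $Y=\operatorname{div}_{\mathcal{X}}(\pi)$ est un diviseur de Cartier principal sur $\mathcal{X}$. À un point fermé $P$ de $X$ j'associe son adhérence schématique $\overline{\{P\}}\subset\mathcal{X}$, courbe intègre finie et plate sur le trait $A$ (elle domine $\Spec A$, donc est sans $A$-torsion), puis je pose $\sigma(P)=Y\cdot[\overline{\{P\}}]=[\operatorname{div}_{\overline{\{P\}}}(\pi)]\in CH_{0}(Y)$, via l'intersection avec le diviseur de Cartier $Y$ \cite[chap. 2]{fulton}. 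Pour la bonne définition sur $CH_{0}(X)$, j'utilise la suite de localisation $CH_{1}(Y)\by{i_{*}}CH_{1}(\mathcal{X})\to CH_{0}(X)\to 0$, le relèvement par adhérence fournissant la surjectivité : il suffit de voir que l'opération $Y\cdot(-)$ annule l'image de $i_{*}$. Or, pour une courbe verticale $W\subset Y$, on a $Y\cdot[W]=c_{1}(\mathcal{O}_{\mathcal{X}}(Y)|_{W})\cap[W]$, et ceci est nul car le fibré $\mathcal{O}_{\mathcal{X}}(Y)\cong\mathcal{O}_{\mathcal{X}}$ est trivial, $Y=\operatorname{div}(\pi)$ étant principal. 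L'application $\sigma$ est donc bien définie. La compatibilité aux degrés résulte de ce que $\overline{\{P\}}$ est finie et plate sur $A$ : le rang générique $[K(P):K]$ égale la longueur de la fibre spéciale $Y\cdot[\overline{\{P\}}]$, d'où $\deg_{k}\sigma(P)=[K(P):K]=\deg_{K}(P)$.

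Pour (ii), je commencerais par établir que $\deg_{K}:CH_{0}(X)\to\Z$ est un isomorphisme. Le morphisme propre birationnel $p:\tilde{X}\to X$ vérifie $\deg_{K}\circ\, p_{*}=\deg_{K}$ sur $CH_{0}(\tilde{X})$, et $p_{*}:CH_{0}(\tilde{X})\to CH_{0}(X)$ est surjectif : par le lemme de déplacement déjà invoqué (\cite[Cor. 6.7]{GLL}), tout zéro-cycle de $X$ est rationnellement équivalent à un zéro-cycle à support dans l'ouvert dense au-dessus duquel $p$ est un isomorphisme, lequel se relève alors à $\tilde{X}$. Comme $\deg_{K}:CH_{0}(\tilde{X})\to\Z$ est un isomorphisme par hypothèse, l'égalité $\deg_{K}\circ\, p_{*}=\deg_{K}$ entraîne formellement que $\deg_{K}:CH_{0}(X)\to\Z$ est lui aussi un isomorphisme.

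Il reste à relever un zéro-cycle de degré zéro $z=\sum_{i}n_{i}y_{i}$ de $Y$ à support dans $Y_{lisse}$. En un point fermé $y\in Y_{lisse}$, le morphisme $\mathcal{X}\to\Spec A$, plat et de fibre spéciale lisse en $y$, est lisse en $y$ ; le corps résiduel $k(y)$ est fini séparable sur $k$. Puisque $A$ est hensélien, il lui correspond une $A$-algèbre finie étale $A'$, anneau de valuation discrète hensélien de corps résiduel $k(y)$ et de corps des fractions $K'$ avec $[K':K]=[k(y):k]$, et $\pi$ y reste uniformisante. La lissité jointe au caractère hensélien permet de relever le $k(y)$-point $y$ de $\mathcal{X}_{A'}$ en une section $\Spec A'\to\mathcal{X}$, dont l'image est une courbe horizontale $C$, finie et plate sur $A$, de point générique un point fermé $\tilde{y}\in X$ de degré $[k(y):k]$ sur $K$. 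Comme $C\times_{A}k=\Spec k(y)$ est réduit, on a $\sigma(\tilde{y})=[y]$. Posant $\tilde{z}=\sum_{i}n_{i}\tilde{y}_{i}\in Z_{0}(X)$, on obtient $\deg_{K}(\tilde{z})=\deg_{k}(z)=0$ et $\sigma([\tilde{z}])=[z]$ dans $CH_{0}(Y)$. L'isomorphisme $\deg_{K}:CH_{0}(X)\to\Z$ donne $[\tilde{z}]=0$, d'où $[z]=\sigma([\tilde{z}])=0$ dans $CH_{0}(Y)$, ce qui est l'assertion voulue.

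Le point le plus délicat est la construction de (i) : la bonne définition de $\sigma$ sur l'équivalence rationnelle repose entièrement sur le formalisme d'intersection avec le diviseur de Cartier $Y$ et sur la trivialité de $\mathcal{O}_{\mathcal{X}}(Y)$, qui force l'annulation de $Y\cdot(-)$ sur les cycles verticaux. En (ii), le soin à apporter porte sur le relèvement hensélien : il faut la lissité de $\mathcal{X}/A$ le long de $Y_{lisse}$ (conséquence de la platitude et de la lissité de la fibre) et l'existence de la section fournie par le caractère hensélien de $A$.
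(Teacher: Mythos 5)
Your part (i) is correct and is, in substance, exactly the construction behind the paper's citation: closure of a closed point, intersection with the principal Cartier divisor $Y=\operatorname{div}_{\mathcal X}(\pi)$, well-definedness via the localization sequence together with $i^{*}i_{*}=c_{1}(\mathcal{O}_{\mathcal X}(Y)|_{Y})\cap(-)=0$, and degree compatibility via finiteness and flatness of the closure over $A$. One caveat on the reference: the paper's Remarque following the proposition explains that it cites \cite[Prop. 2.6]{fulton} and \cite{fulton75} \emph{rather than} \S 20.3 of Fulton, precisely because the proof written in \S 20.3 passes through Theorem 6.3 and the deformation to the normal cone, which Fulton establishes only for varieties over a field, while $A$ here is an arbitrary DVR. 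Your written argument uses only the Chapter 1--2 formalism (which does extend to schemes over a DVR), so the substance is fine, but the citation should be Prop.\ 2.6/\S 20.1, not \S 20.3.

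In part (ii) there is a genuine gap. You first claim that $\deg_{K}:CH_{0}(X)\to\Z$ is an isomorphism, deducing surjectivity of $p_{*}$ from the assertion that \emph{any} zero-cycle on $X$ can be moved into the open set $U$ over which $p$ is an isomorphism. But $X$ is the possibly singular generic fiber, and the moving lemma you invoke (\cite[Cor. 6.7]{GLL}) only moves zero-cycles whose support lies in the \emph{regular locus}; a cycle through a singular point cannot be moved in general. In fact your intermediate claim is false, not merely unproved: let $X$ be the geometrically integral projective curve over $K$ obtained from $\P^{1}_{K}$ by pinching a closed point $q$ of degree $2$, with residue field $L$ quadratic over $K$, to a $K$-rational node $n$ (this $X$ is the generic fiber of a proper flat $A$-model, e.g.\ the closure of $X$ in $\P^{n}_{A}$). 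For $f\in K(X)^{\times}$ one has $\operatorname{ord}_{n}(f)=[L:K]\operatorname{ord}_{q}(f)$, so every principal divisor on $X$ has even coefficient at $n$; hence $[n]-[p]$, with $p$ a smooth $K$-point, is a nonzero $2$-torsion class of degree zero in $CH_{0}(X)$, although $CH_{0}(\tilde X)=CH_{0}(\P^{1}_{K})=\Z$. So neither ``$p_{*}$ surjective'' nor ``$\deg_{K}$ injective on $CH_{0}(X)$'' holds in the generality you assert.

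The repair is the paper's own argument, and it shows you never need $CH_{0}(X)\simeq\Z$: your lift $\tilde z$ is, by your own construction, supported in the smooth locus $X_{lisse}$ (the sections lie in the $A$-smooth locus of $\mathcal X$), so the moving lemma \emph{does} apply to $\tilde z$; move it within $X$ to a degree-zero cycle $z_{2}$ supported in $U$, lift $z_{2}$ to $z_{3}$ on $\tilde X$, and use the hypothesis there to get $[\tilde z]=[z_{2}]=p_{*}[z_{3}]=0$ in $CH_{0}(X)$, whence $[z]=\sigma([\tilde z])=0$. A second, smaller point: you assert that $k(y)/k$ is finite \emph{separable} for $y$ a closed point of $Y_{lisse}$; this is false over an imperfect field (e.g.\ $k=\F_{p}(t)$, $Y=\A^{1}_{k}$, $y=(x^{p}-t)$), so the finite \'etale extension $A'$ you use need not exist. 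This is harmless in the paper's applications, where $k$ is finite or algebraically closed, but for the proposition as stated one needs a lifting argument that avoids separability, e.g.\ cutting $\mathcal X$ near $y$ by liftings of a regular system of parameters of $\mathcal{O}_{Y,y}$, which produces a finite flat $A$-curve through $y$ with reduced special fiber $\Spec k(y)$.
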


\begin{proof}
 L'\'enonc\'e (i) est un cas particulier de la construction
d'homomorphismes
de sp\'ecialisation
(Fulton  \cite[Prop. 2.6]{fulton}, \cite[\S 4]{fulton75}).

Montrons (ii). 
Soit $X_{lisse} \subset  X$ 
l'ouvert de lissit\'e.  Soit $U \subset  X_{lisse}$ un ouvert non vide 
tel que la fl\`eche induite  $p : p^{-1}(U) \to U$ soit un isomorphisme.
  Par Hensel, un z\'ero-cycle $z$ de degr\'e z\'ero sur $Y_{lisse}$ se rel\`eve en
   un  z\'ero-cycle $z_{1}$, de degr\'e z\'ero, support\'e sur $X_{lisse}$.
   Un lemme de d\'eplacement facile  assure que le z\'ero-cycle
   $z_{1}$ est rationnellement \'equivalent dans $X$ \`a un z\'ero-cycle
   $z_{2}$, de degr\'e z\'ero,  dont le support est dans $U$. Le z\'ero-cycle $z_{2}  $ 
   est l'image par $p$
  d'un z\'ero-cycle de degr\'e z\'ero $z_{3}$ sur $\tilde{X}$.
  L'application compos\'ee $CH_{0}(\tilde{X}) \to CH_{0}(X) \to CH_{0}(Y)$
  envoie la classe de $z_{3}$ sur la classe de $z$. Or, par hypoth\`ese, $z_{3}=0 \in CH_{0}(\tilde{X})$.
   \end{proof}

 \begin{rema}
 Pour la construction de l'homomorphisme de sp\'e\-cia\-lisation, nous avons cit\'e
  \cite[ \S 2, Prop. 2.6]{fulton}) et non la r\'ef\'erence plus \'evidente
  \cite[\S 20.3]{fulton}). La raison est que la d\'emonstration donn\'ee dans
    \cite[\S 20.3]{fulton}) passe par \cite[Theorem 6.3]{fulton}  et donc par  la d\'eformation au c\^one normal
\cite[ \S 5]{fulton},  r\'esultat \'etabli pour les vari\'et\'es au-dessus d'un corps.
Or nous ne voulons pas nous limiter au cas o\`u $A$ est un anneau local d'une courbe lisse sur un corps.
  \end{rema}

On renvoie \`a \cite{rost} pour la th\'eorie des modules de cycles de Rost (voir aussi les rappels
dans \cite{merkurjev} et \cite{ACTP}).
Rappelons que pour $k$ un corps et $n$ un entier inversible dans ce corps,
 les groupes de cohomologie \'etale $H^{i}(\bullet, \mu_{n}^{\otimes j})$   d\'efinissent une th\'eorie
des modules de cycles sur les $k$-vari\'et\'es, et que pour $Y$ une $k$-vari\'et\'e projective int\`egre de
d\'esingularisation $Z \to Y$, le groupe de cohomologie non ramifi\'ee  $H^{2}_{nr}(k(Y)/k,\mu_{n})$
s'identifie au sous-groupe de $n$-torsion du groupe de Brauer $\Br(Z)$.

\begin{theo}\label{principal}
Soit $A$ un anneau de valuation discr\`ete   de corps des fractions $K$
et de corps r\'esiduel $k$.
Soit ${\mathcal X}$ un $A$-sch\'ema fid\`element plat et propre sur $A$
\`a fibres g\'eom\'etriquement int\`egres.

Supposons  que
la fibre sp\'eciale $Y={\mathcal X}\times_{A}k$  
  poss\`ede  une d\'esingularisation  $f : Z \to Y$,
   avec  $Z$ lisse sur $k$,
telle que le morphisme $f$  est 
 universellement
$CH_{0}$-trivial, et que $Z$ poss\`ede un
z\'ero-cycle de degr\'e 1.

Supposons que la  fibre g\'en\'erique $X={\mathcal X}\times_{A}K$  
admet une r\'esolution des singularit\'es $\tilde{X} \to X$,   avec  $\tilde{X}$ lisse sur $K$.

Chacun des \'enonc\'es (i), (ii), (iii) ci-dessous implique le suivant   :

(i) La $K$-vari\'et\'e $X$ est r\'etracte rationnelle.

(ii) La $K$-vari\'et\'e $\tilde{X}$ est 
universellement
$CH_{0}$-triviale.

 (iii) La $k$-vari\'et\'e $Z$ est 
 universellement
$CH_{0}$-triviale.

Cette derni\`ere propri\'et\'e implique :

(a) Pour tout module de cycles    $M^{i}$  sur le corps $k$,
pour tout corps~$L$ contenant 
$k$,
et tout $i\geq 0$, la fl\`eche
   $M^{i}(L) \to M^{i}_{nr}(L(Z)/L)$ est un isomorphisme.
   
   (b) Pour tout corps $L$ contenant $k$, la fl\`eche naturelle
   $\Br(L) \to \Br(Z_{L})$ est un isomorphisme.
 \end{theo}

\begin{proof} 
Supposons (i). Le lemme   \ref{retractCh0trivial} donne alors (ii).
Supposons (ii). Comme la fibre  $Y$ est g\'eom\'etriquement int\`egre,
son point g\'en\'erique $\eta$  est r\'egulier sur $\mathcal{X}$, l'anneau local $O_{{\mathcal X},\eta}$
de $\mathcal{X}$ en ce point g\'en\'erique est un anneau de valuation
discr\`ete de corps des fractions le corps  des fonctions $K(X)$ de $X$
et de corps r\'esiduel le corps des fonctions $k(Y)$ de $Y$.
Notons $B$ le compl\'et\'e de l'anneau de valuation discr\`ete  $O_{{\mathcal X},\eta}$.
Soit $F$ le corps des fractions de $B$. Le corps r\'esiduel de $B$ est $k(Y)=k(Z)$.
La fl\`eche naturelle $A \to B$ est un homomorphisme local,
induisant $k \to k(Y)$ sur les corps r\'esiduels.
On consid\`ere le $B$-sch\'ema ${\mathcal X}\times_{A}B$. Sa fibre
g\'en\'erique est $X\times_{K}F$, qui admet la d\'esingularisation
$\tilde{X}\times_{K}F \to X\times_{K}F$.
Sa fibre sp\'eciale  est $Y\times_{k}k(Y)$, qui 
 admet la d\'esingularisation $Z_{k(Y)} \to Y_{k(Y)}$.
 Le $k(Y)$-morphisme $Z_{k(Y)} \to Y_{k(Y)}$
 est universellement $CH_{0}$-trivial.

L'hypoth\`ese (ii) assure que le degr\'e 
$deg_{F} : CH_{0}({\tilde X}_{F}) \to \Z$ est un isomorphisme.
Une application des propositions \ref{chowunivtrivial} et \ref{specialisation}
au $B$-sch\'ema ${\mathcal X}\times_{A}B$  montre alors
que la fl\`eche degr\'e $CH_{0}(Z_{k(Z)}) \to \Z$ est un isomorphisme.
La proposition \ref{equivalencedeuxpointsdevue} appliqu\'ee \`a la $k$-vari\'et\'e $Z$
assure alors que cette vari\'et\'e est universellement $CH_{0}$-triviale.
\end{proof}

\begin{rema}
On peut se dispenser de passer 
par l'anneau $O_{{\mathcal X},\eta}$. De fait,  pour  tout anneau de valuation discr\`ete $A$ de corps r\'esiduel $k$
et tout corps $F$ contenant $k$,  il existe un anneau de valuation discr\`ete $B$ de corps r\'esiduel $F$
et un homomorphisme local de $A$ dans $B$ induisant l'inclusion  $k \subset F$ (J-P. Serre nous signale les \'enonc\'es   g\'en\'eraux
de Bourbaki sur le
``gonflement des anneaux locaux'' \cite[Chap. IX, Appendice, \S 2, Corollaire du Th\'eor\`eme 1, et Exercice 4]{AC}.)
 \end{rema}

\begin{theo}\label{principalgeom}
Soit $A$ un anneau de valuation discr\`ete 
de corps des fractions $K$
et de corps r\'esiduel $k$ alg\'ebriquement clos.
Soit ${\mathcal X}$ un $A$-sch\'ema fid\`element plat et propre sur $A$
\`a fibres g\'eom\'etriquement int\`egres.

Supposons  que
la fibre sp\'eciale $Y={\mathcal X}\times_{A}k$  
 poss\`ede  une d\'esingularisation  $f : Z \to Y$
telle que le morphisme $f$  est 
 universellement
$CH_{0}$-trivial.

Soit $\overline{K}$ une cl\^oture alg\'ebrique de $K$.
Supposons que la  fibre g\'en\'erique g\'eom\'etrique $\overline{X}:={\mathcal X}\times_{A}{\overline K}$  
admet une d\'esingularisation $\tilde{X} \to \overline{X}$.

Chacun des \'enonc\'es (i), (ii), (iii) ci-dessous implique le suivant :

(i) La $\overline{K}$-vari\'et\'e $\overline{X}$ est r\'etracte rationnelle.

(ii) La  $\overline{K}$-vari\'et\'e  $\tilde{X}$  est 
universellement
$CH_{0}$-triviale.

 (iii) La $k$-vari\'et\'e $Z$ est 
universellement
$CH_{0}$-triviale.

Cette derni\`ere propri\'et\'e implique :

(a) Pour tout module de cycles    $M^{i}$  sur le corps $k$,
pour tout corps~$L$ contenant 
$k$,
et tout $i\geq 0$, la fl\`eche
   $M^{i}(L) \to M^{i}_{nr}(L(Z)/L)$ est un isomorphisme.
   
   (b) Pour tout corps $L$ contenant $k$, la fl\`eche naturelle
   $\Br(L) \to \Br(Z_{L})$ est un isomorphisme.
   
   (c) $\Br(Z)=0$.
\end{theo}

\begin{proof}

Supposons (i). Le lemme   \ref{retractCh0trivial} donne alors (ii).
Supposons (ii).  Quitte \`a remplacer l'anneau  de valuation discr\`ete $A$ par son compl\'et\'e,
on peut supposer $A$ complet.
Il existe une sous-extension finie   $K \subset L \subset {\overline K}$,
une $L$-vari\'et\'e propre et lisse  $W$ avec $W\times_{L}{\overline K}=  \tilde{X}$
munie d'un $L$-morphisme
$W \to X_{L}$ qui par changement de base de $L$ \`a $\overline K$
s'identifie \`a  $\tilde{X} \to \overline{X}$.  La proposition \ref{equivalencedeuxpointsdevue}
montre que quitte \`a remplacer $L$ par une extension finie, on peut supposer
que   la $L$-vari\'et\'e $W$ est universellement $CH_{0}$-triviale.
 Comme $A$ est complet, la fermeture int\'egrale $B$
de $A$ dans $L$ est un anneau de valuation discr\`ete complet (\cite[Chap. II, \S 2, Prop. 3]{serre}).
Son corps r\'esiduel
est $k$.
 Une application du th\'eor\`eme 
\ref{principal} au $B$-sch\'ema
$\mathcal{X}\times_{A}B$ termine la d\'emonstration.
  \end{proof}

\begin{rema}
Ce th\'eor\`eme \'etend le  th\'eor\`eme
\cite[Thm. 1.1 (i)]{voisin} de C.~Voisin, qui porte sur le cas 
 $k= \C$, $\mathcal{X}$   sch\'ema  r\'egulier
et $Y$ \`a singularit\'es quadratiques ordinaires.
\end{rema}

\begin{rema}
Dans la d\'emonstration, m\^eme si $\mathcal{X}$ est r\'egulier, 
le changement de base $A \to B$ peut donner naissance \`a un sch\'ema
$\mathcal{X}\times_{A}B$ qui n'est pas r\'egulier. Mais la fibre
sp\'eciale ne change pas.
\end{rema}

\begin{theo}\label{quartiques1}
Dans l'espace projectif $\P^N$ param\'etrant les hypersurfaces quartiques dans $\P^4_{\C}$,
l'ensemble $E$  des points de $\P^N(\C)$  param\'etrant une quartique non r\'etracte rationnelle, et donc en particulier
non stablement rationnelle,  est
Zariski-dense.  Fixons un plongement ${\overline{\Q}}(t) \subset \C$.
Le sous-ensemble de $E$ form\'e des points dont les coordonn\'ees sont dans ${\overline{\Q}}(t)$ est   Zariski dense
dans $\P^N$.

\end{theo}
\begin{proof}
Soit $W \subset \P^N_{\Q}$ le ferm\'e correspondant aux quartiques singuli\`eres.
D'apr\`es l'appendice A, ou d'apr\`es
  J. Huh \cite{huh}, il existe une quartique singuli\`ere $Y$ d\'efinie sur $\overline{\Q}$
et  une r\'esolution des singularit\'es $f : Z \to Y$ telles que le $\overline{\Q}$-morphisme  $f$ est
universellement
$CH_{0}$-trivial
et  que $\Br (Z_{\C}) \neq 0$, ce qui implique
 $\Br (Z) \neq 0$.
Soit $D=\P^1_{\overline{\Q}} \subset \P^N_{\overline{\Q}}$ une droite passant par un 
$\overline{\Q}$-point $M\in \P^N$
associ\'e \`a la quartique $Y$, et non contenue dans $W_{\overline{\Q}}$. 
Soit $A$ l'anneau local de la droite  $D$ au point $M$,
et soit $K$ son corps des fonctions.
Le th\'eor\`eme \ref{principalgeom}
implique que la quartique lisse sur $K=\overline{\Q}(t)$ correspondant au point
g\'en\'erique de $D$ n'est pas g\'eom\'etriquement r\'etracte rationnelle.

Tout choix d'un point   $R$ de $D(\C) \setminus D(\overline{\Q})$ d\'efinit un plongement
$\overline{\Q}(D) \hookrightarrow \C$, qui d\'efinit un plongement d'une cl\^oture alg\'ebrique
de $\overline{\Q}(D) $ dans $\C$. Par le lemme  \ref{corpsalgclos},
la quartique lisse associ\'ee au point $R \in
 D(\C) \subset \P^N(\C)$ n'est donc pas r\'etracte rationnelle.

 On voit donc que, pour  tout point  $R$ de $\P^N(\C)$ non dans $\P^N(\overline{\Q})$
 et non dans $W$
 tel que la droite joignant $R$ et $M$ soit d\'efinie sur $\overline{\Q}$, la quartique lisse associ\'ee \`a $R$
 n'est pas r\'etracte  rationnelle.
\end{proof}

\begin{rema}
 Soit  $Y$ un rev\^etement double d'une surface  quartique d'Artin-Mumford \cite{artinmumford}
d\'efinie sur $\overline{\Q}$. La vari\'et\'e $Y$ a exactement 10 points singuliers quadratiques ordinaires.
Une  d\'esingularisation $Z \to Y$ s'obtient par  \'eclatement de ces 10 points, au-dessus de
chacun de ces points on a une  quadrique lisse de dimension 2.
La proposition \ref{chowisolissesing}  montre imm\'ediatement que le morphisme de d\'esingularisation
est   
 universellement
$CH_{0}$-trivial.

On retrouve ici
le th\'eor\`eme de
 C. Voisin \cite[Cor. 1.4]{voisin} :
 {\it Dans l'espace projectif $\P^N$ param\'etrant les surfaces quartiques  $f=0$ dans $\P^3_{\C}$,
l'ensemble des points de $\P^N(\C)$  tels que le rev\^etement double associ\'e $z^2-f=0$
dans l'espace  multihomog\`ene $\P(2,1,1,1,1)$ ne soit pas une vari\'et\'e stablement rationnelle
est  Zariski-dense.}

 \end{rema}

\begin{rema}
La d\'emonstration du th\'eor\`eme \ref{quartiques1} que nous avons donn\'ee repose sur le th\'eor\`eme \ref{principalgeom},
donc  sur le th\'eor\`eme \ref{principal}, donc
sur l'op\'eration de sp\'ecialisation 
des z\'ero-cycles $CH_{0}(X) \to CH_{0}(Y)$, o\`u $X$,
  resp. $Y$, est 
 la fibre g\'en\'erique, resp. la fibre sp\'eciale,
d'un $A$-sch\'ema propre fid\`element plat sur un anneau de valuation discr\`ete $A$ de corps des fractions $K$
et de corps r\'esiduel $k$ (Proposition \ref{specialisation}).

On pourrait si l'on voulait ignorer cette op\'eration en la rempla\c cant par l'op\'eration
de sp\'ecialisation de la $R$-\'equivalence $X(K)/R \to Y(k)/R$, facile \`a d\'efinir \cite{madore}. 
Pour  $K$ de caract\'eristique z\'ero,  cette m\'ethode suffit  
\`a montrer les implications (i)  $\Longrightarrow$ (iii) dans les
 th\'eor\`emes \ref{principal} et  \ref{principalgeom}. 
  L'hypoth\`ese de caract\'eristique z\'ero
 est utilis\'ee pour   montrer, via le th\'eor\`eme d'Hironaka, que sous  l'hypoth\`ese (i) du
 th\'eor\`eme  \ref{principal},   la $R$-\'equivalence sur $\tilde{X}(K)$ est triviale (en utilisant \cite[Prop. 10]{CTS}).

 \end{rema}

Nous remercions O. Wittenberg de nous avoir sugg\'er\'e l'\'enonc\'e suivant.

\begin{theo}\label{reducmodp}
Il existe des hypersurfaces quartiques lisses $X \subset \P^4_{\C}$ d\'efinies sur la  cl\^oture alg\'ebrique 
$\overline{\Q}$ de $\Q$ dans $\C$, et qui ne sont pas universellement $CH_{0}$-triviales, en particulier
qui  ne sont pas   r\'etractes rationnelles.
\end{theo} 

\begin{proof}
Soit $Y \subset \P^4_{\overline{\Q}}$ une quartique singuli\`ere poss\'edant  
 une r\'esolution des singularit\'es $f : Z \to Y$  comme construite dans
  l'appendice A. Le morphisme $f$ est  universellement $CH_{0}$-trivial.
Le sous-groupe de  torsion  2-primaire $\Br(Z)\{2\}$ du groupe de Brauer $\Br(Z)$ est non nul. Dans la situation consid\'er\'ee,
o\`u $Z$ est une vari\'et\'e rationnellement connexe, donc satisfait $H^2(Z,O_{Z})=0$ et 
donc   $\rho=b_{2}$,  le groupe $\Br(Z)\{2\}$
 s'identifie
  au sous-groupe de torsion 2-primaire $H^3_{{\acute{e}}t}(Z, \Z_{2})\{2\}$  du
troisi\`eme groupe de cohomologie 2-adique (\cite[II, \S 3; III, \S 8] {grbr}).

D'apr\`es  la description des fibres de $f$ donn\'ee dans la
proposition \ref{lemodele} de l'appendice A,
il existe une extension finie $K$ de $\Q$ sur laquelle  $Z , Y$  et $Z \to Y$ sont
 d\'efinis et  sur laquelle $f : Z \to Y$ est un $K$-morphisme  universellement $CH_{0}$-trivial.
 
 Il existe un ouvert $U$  non vide du spectre  de l'anneau des entiers de $K$, des $U$-sch\'emas
 $\mathcal{Z}$, $\mathcal{Y}$ et un $U$-morphisme   $\mathcal{Z} \to \mathcal{Y}$
 \'etendant $Z \to Y$, de telle sorte que pour toute place $v \in U$,
  on obtienne
 par r\'eduction une situation analogue $Z_{\kappa(v)} \to Y_{\kappa(v)}$  sur le corps fini r\'esiduel $\kappa(v)$ :
 c'est une r\'esolution des singularit\'es de $Y_{\kappa(v)}$,  et la fl\`eche  $Z_{\kappa(v)} \to Y_{\kappa(v)}$  
 est universellement $CH_{0}$-triviale. On peut supposer que $U$ ne contient pas de place 2-adique.
Soit $S$ l'ensemble fini des nombres premiers 
qui sont caract\'eristiques r\'esiduelles du  
compl\'ementaire
de $U$ dans le spectre de l'anneau des
entiers de $k$.
  Notons $\overline{\kappa(v)}$ une cl\^oture alg\'ebrique de $\kappa(v)$.
 Le th\'eor\`eme de changement de base propre et lisse (\cite[Chap. V, Thm. 3.1]{deligne}) assure alors  
 $H^3_{{\acute{e}}t}(Z_{\overline{\kappa(v)}},     \Z_{2})   \{2\} \neq 0$ pour tout $v\in U$, et donc
 $\Br(   Z_{\overline{\kappa(v)}}    ) \neq 0$.

 Soit $  \P^N_{\Q}$ l'espace projectif param\'etrant les hypersurfaces quartiques.
 Il existe une hypersurface  $ \Sigma  \subset  \P^N_{\Q}$, d\'efinie par une
 forme homog\`ene non nulle $\Delta$ \`a coefficients entiers,
 telle que toute hypersurface quartique de param\`etre hors de $\Sigma$
 est lisse.  L'hypersurface $Y_{\kappa(v)}$ correspond \`a un point $m_{v} \in \P^N(\kappa(v))$.
 Soit $A$ l'anneau local de l'anneau des entiers de $K$ en $v$.
Il existe un point $m \in \P^N(A)$ se r\'eduisant sur $m_{v}$ et tel que $\Delta(m) \neq 0 \in K$.
On applique alors le th\'eor\`eme \ref{principalgeom}.
\end{proof}

\medskip

Clemens et Griffiths ont montr\'e qu'une hypersurface cubique lisse dans $\P^4_{\C}$
n'est jamais rationnelle. 
Sur un corps $k$ quelconque, on peut se poser la question de la $k$-rationalit\'e, stable
ou non, des hypersurfaces cubiques lisses dans $\P^n_{k}$, $n \geq 3$,
du moins pour celles qui poss\`edent un $k$-point.
Le cas $n=3$  a \'et\'e  \'etudi\'e (Shafarevich, Manin). Sur le corps $\R$
des r\'eels, une  hypersurface cubique   lisse $X$ telle que $X(\R)$ ait
deux composantes connexes  pour la topologie r\'eelle ne saurait \^etre
r\'etracte rationnelle.
 Pour $n\geq 4$, 
 la question de la rationalit\'e stable sur $k$ est ouverte lorsque
 $k$ est alg\'ebriquement clos, et lorsque $k$ est fini.
 Pour $k=\C((x))((y))$, D. Madore \cite{madore2} a montr\'e que le groupe
 de Chow $A_{0}(X) \subset CH_{0}(X)$ des cycles de dimension et de
 degr\'e z\'ero sur 
l'hypersurface $X \subset \P^4_{k}$ d\'efinie en coordonn\'ees homog\`enes par l'\'equation
  $$ T_{0}^3+T_{1}^3+x T_{2}^3+ y  T_{3}^3 +x y  T_{4}^3=0$$
est non nul. Ceci implique que $X$ 
n'est pas r\'etracte rationnelle. La d\'emonstration proc\`ede par
  sp\'ecialisation du groupe de Chow. Elle utilise un espace principal
  homog\`ene non trivial sous une vari\'et\'e ab\'elienne sur   $\C((x))$.
On ne peut donc y remplacer $\C((x))$ par un corps fini $\F_{p}$.
Ceci laissait donc ouverte la question analogue sur $k=\F_{p}((y))$
ou $k$ un corps $p$-adique.  
La m\'ethode de sp\'ecialisation du pr\'esent
article permet de r\'esoudre cette quesion.

\begin{theo}\label{padiquestable}
Sur tout corps $p$-adique $k$, il existe 
une hypersurface cubique 
$X \subset \P^4_{k}$  poss\'edant un point rationnel et qui
n'est pas universellement  $CH_{0}$-triviale, et qui n'est donc
  pas r\'etracte rationnelle.
\end{theo}

\begin{proof}
Soient    $k$ un corps $p$-adique,  $A \subset k$ son anneau des entiers, $\pi$ une uniformisante et $\F$ le  corps fini  r\'esiduel.
  Soit $K/k$ l'extension cubique non ramifi\'ee. Il existe un \'el\'ement $\alpha$ dans l'anneau des
  entiers de $K$ tel que $K=k(\alpha)$ et la classe $\beta$ de $\alpha$ dans le corps r\'esiduel  engendre l'extension cubique 
  $E$ de $\F$.
 Soit $\Phi \in \Z_{p}[u,v,w,x,y]$ une forme  telle que 
$\Phi =0$
d\'efinisse un $\Z_{p}$-sch\'ema lisse.
On d\'efinit l'hypersurface cubique   $\mathcal{X}  \subset  \P^4_{A}$ par l'\'equation
  $$ \Norm_{K/k}(u+\alpha v+\alpha^2w) + xy(x-y) + \pi \Phi(u,v,w,x,y)=0.$$

La fibre g\'en\'erique $\mathcal{X} \times_{A}k$ est une hypersurface cubique lisse dans $\P^4_{k}$.

La fibre sp\'eciale de $\mathcal{X}/A$ 
est d\'efinie sur le corps fini $\F$ par
l'\'equation
$$ \Norm_{E/\F}(u+\beta v+\beta^2 w) + xy(x-y) =0.$$
Ceci est une hypersurface cubique  $Y$ dans $\P^4_{\F}$ qui 
g\'eom\'etriquement a trois points singuliers,  
avec $x=y=0$, conjugu\'es entre eux sous l'action du groupe de Galois de $E/\F$,
donn\'es par l'annulation de deux des conjugu\'es de $u+\beta v+\beta^2w$.
Ceci d\'efinit un unique point ferm\'e $m$ de $Y$.
Soit   $Y_{1} \to Y$ l'\'eclatement de ce point ferm\'e.
En passant sur le corps $E$, on v\'erifie que  l'image r\'eciproque de $m$
est l'union de deux $E$-surfaces lisses $E$-rationnelles, d'intersection  
une   courbe  $L\simeq \P^1_{E}$.
La fl\`eche  $Y_{1}\to Y$ est donc un $CH_{0}$-isomorphisme universel de $\F$-vari\'et\'es.
La vari\'et\'e $Y_{1}$
 a trois points singuliers $P_{i}, i=1,2,3$, 
 de corps
r\'esiduel $E$,
situ\'es sur $L$. 
Soit $Z \to Y_{1}$ l'\'eclatement des   trois points $P_{i}$.
On v\'erifie que $Z$ est lisse sur $\F$, et que l'image r\'eciproque de
chaque $P_{i}$ est une $E$-surface projective lisse $E$-rationnelle.
La fl\`eche $Z \to Y_{1}$ est donc un $CH_{0}$-isomorphisme universel de $\F$-vari\'et\'es.
Ainsi la fl\`eche compos\'ee $Z \to Y_{1} \to Y$ est une d\'esingularisation
qui est un $CH_{0}$-isomorphisme universel de $\F$-vari\'et\'es.

Un calcul un peu \'elabor\'e, renvoy\'e en appendice (Proposition \ref{brauernonramifienontrivial}),
montre que l'on a
    $\Br(Z) \neq 0$, et plus pr\'ecis\'ement que la classe $\xi$  de l'alg\`ebre cyclique $(E/\F, x/y) \in \Br(\F(Y))$
 appartient \`a $\Br(Y)$ et est non nulle.

Le th\'eor\`eme \ref{principal} assure alors que la $k$-vari\'et\'e $X=\mathcal{X}\otimes_{A}k$ 
n'est pas universellement $CH_{0}$-triviale, en particulier elle n'est pas r\'etracte rationnelle.
 \end{proof}

 \begin{rema} 
C'est  une question ouverte si le groupe de Chow $A_{0}(X)$ des cycles de degr\'e z\'ero
 est nul pour toute  hypersurface cubique lisse  $X$ de dimension   3 sur un corps $p$-adique.
 Pour $p\neq 3$ et $k=\Q_{p}$, 
   Esnault et Wittenberg \cite[Ex. 2.10]{EW} ont \'etabli la nullit\'e de $A_{0}(X)$ pour l'hypersurface de $\P^4_{k}$  d'\'equation
   $$x^3+y^3+z^3+pu^3+p^2v^3=0.$$

   \end{rema}

   \begin{rema}
   Sur tout corps de nombres $k$, le th\'eor\`eme \ref{padiquestable} permet de donner
  des exemples d'hypersurfaces cubiques dans $\P^4_{k}$,
avec un point $k$-rationnel,  qui ne sont pas r\'etractes rationnelles sur $k$. 
De fa\c con analogue, on construit de tels exemples
 sur  $k=\F((x))$ puis sur  $k=\F(x)$, avec $\F$ un corps fini.
 \end{rema}

\section{D\'ecomposition de la diagonale en famille et action des correspondances}\label{topologique}

  \subsection{Sur le lieu de d\'ecomposition de la diagonale}
\begin{lem}\label{lieuxgeneral}
Soit $B$ un sch\'ema int\`egre de type fini sur un corps non d\'enombrable $k$. Soit $\overline{k}$ une cl\^oture alg\'ebrique de $k$.
Soit $\{B_i\}_{i\in \N}$ une famille d\'enombrable de sous-sch\'emas ferm\'es de $B$. Les assertions suivantes sont \'equivalentes:

(i) un des $B_{i}$ co\"{\i}ncide avec $B$;

(ii) $\bigcup_i B_i$ contient le point g\'en\'erique de $B$;

(iii) $B(\bar k)\subset \bigcup_i B_i(\bar k)$;

(iv) $\bigcup_i B_i$ contient un point g\'en\'eral de $B$: il existe un ouvert $U\subset B$ tel que $U(\bar k)\subset \bigcup_i B_i(\bar k)$;

(v) $\bigcup_i B_i$ contient un point tr\`es g\'en\'eral de $B$: il existe une famille d\'enombrable $\{F_j\}_j$ de ferm\'es stricts de $B$, telle que $(B(\bar k)\setminus \bigcup_j F_j(\bar k))\subset \bigcup_i B_i(\bar k)$.

\end{lem}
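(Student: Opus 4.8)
Le plan est de montrer que, parmi ces cinq conditions, toutes les implications sont formelles sauf une, l'implication (v) $\Rightarrow$ (i), seule \`a utiliser la non-d\'enombrabilit\'e de $k$. Comme $B$ est int\`egre, tout sous-sch\'ema ferm\'e dont l'espace sous-jacent contient le point g\'en\'erique $\eta$ a pour support $\overline{\{\eta\}}=B$, donc un id\'eal de d\'efinition contenu dans le nilradical nul : un tel sous-sch\'ema co\"{\i}ncide avec $B$. On en tire aussit\^ot (i) $\Leftrightarrow$ (ii). Les implications (i) $\Rightarrow$ (iii) (prendre $B_{i_{0}}=B$), (iii) $\Rightarrow$ (iv) (prendre $U=B$) et (iv) $\Rightarrow$ (v) (prendre pour unique ferm\'e strict $F_{1}=B\setminus U$, de sorte que $B(\bar k)\setminus F_{1}(\bar k)=U(\bar k)$) sont imm\'ediates.

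Pour (v) $\Rightarrow$ (i), supposons (v) et supposons (i) en d\'efaut : chaque $B_{i}$ est alors un ferm\'e strict de $B$, faute de quoi il contiendrait $\eta$ et vaudrait $B$. La famille $\{B_{i}\}_{i}\cup\{F_{j}\}_{j}$ est une famille d\'enombrable de ferm\'es stricts, et (v) fournit l'inclusion $B(\bar k)\subset\bigcup_{i}B_{i}(\bar k)\cup\bigcup_{j}F_{j}(\bar k)$. Il suffit donc d'\'etablir le fait clef suivant, qui la contredit : {\it sur un corps $k$ non d\'enombrable, les $\bar k$-points d'une $k$-vari\'et\'e int\`egre $B$ ne peuvent \^etre recouverts par les $\bar k$-points d'une famille d\'enombrable de ferm\'es stricts.}

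Pour ce fait clef, on se ram\`ene \`a $B$ affine en rempla\c cant $B$ par un ouvert affine non vide et chaque ferm\'e strict par sa trace, qui reste stricte puisque $B$ est irr\'eductible. La normalisation de Noether fournit un morphisme fini surjectif $\pi:B\to\A^{d}_{k}$ avec $d=\dim B$. Un morphisme fini \'etant ferm\'e et conservant la dimension, l'image $\pi(G)$ d'un ferm\'e strict $G$ est un ferm\'e de dimension $<d$, donc contenu dans une hypersurface $\{h=0\}$, $h\in k[y_{1},\dots,y_{d}]$ non nul ; par ailleurs $\pi$ est surjectif sur les $\bar k$-points, ses fibres g\'eom\'etriques \'etant des $\bar k$-sch\'emas finis non vides. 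Il suffit donc de trouver un point de $\bar k^{d}$ \'evitant une famille d\'enombrable d'hypersurfaces $\{h_{n}=0\}$, puis de le relever par $\pi$. On raisonne par r\'ecurrence sur $d$ (le cas $d=0$ \'etant trivial). \'Ecrivant $h_{n}$ comme polyn\^ome en $y_{d}$ \`a coefficients dans $\bar k[y_{1},\dots,y_{d-1}]$, de coefficient dominant $\ell_{n}\neq 0$, l'hypoth\`ese de r\'ecurrence donne $(a_{1},\dots,a_{d-1})$ n'annulant aucun des $\ell_{n}$ ; en ce point $h_{n}(a_{1},\dots,a_{d-1},y_{d})$ est, pour chaque $n$, un polyn\^ome non nul en $y_{d}$, n'ayant qu'un nombre fini de racines, et comme $\bar k$ est non d\'enombrable on choisit $a_{d}$ \'evitant la r\'eunion d\'enombrable de ces racines.

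La seule difficult\'e r\'eside dans ce dernier point, o\`u intervient de fa\c con essentielle la non-d\'enombrabilit\'e de $k$ (donc de $\bar k$, qui contient $k$) : un ensemble d\'enombrable de racines ne saurait \'epuiser la droite affine sur $\bar k$. Tout le reste de l'argument est de nature purement formelle.
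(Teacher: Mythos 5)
Votre d\'emonstration est correcte et suit essentiellement la m\^eme voie que celle de l'article : m\^eme cha\^{\i}ne d'implications formelles (i) $\Leftrightarrow$ (ii), (i) $\Rightarrow$ (iii) $\Rightarrow$ (iv) $\Rightarrow$ (v), tout le contenu \'etant concentr\'e dans (v) $\Rightarrow$ (i), ramen\'ee au fait qu'une vari\'et\'e int\`egre sur un corps non d\'enombrable ne peut avoir ses $\bar k$-points recouverts par une famille d\'enombrable de ferm\'es stricts. La seule diff\'erence est que l'article affirme ce fait clef en une ligne (\og Comme $\bar k$ est non d\'enombrable, cela implique... \fg), tandis que vous le d\'emontrez compl\`etement (normalisation de Noether et r\'ecurrence sur la dimension) : c'est un remplissage de d\'etails, non un argument diff\'erent.
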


\begin{proof}
Les \'enonc\'es  (i)   et  (ii)   sont \'equivalents.
Les implications 
 (i) $\Rightarrow$  (iii) $\Rightarrow$ (iv) $\Rightarrow$  (v)  sont \'evidentes. Supposons  (v). On a alors $B(\bar k)=  \bigcup_j F_j(\bar k)\cup \bigcup_i B_i(\bar k)$ avec $F_j\subset B$ des ferm\'es stricts. Comme $\bar k$ est non d\'enombrable, cela implique qu'un des $B_i$ co\"incide avec $B$, d'o\`u  (i).
\end{proof}

Soit $k$ un corps alg\'ebriquement clos.  Soit  $X$ une vari\'et\'e propre  int\`egre sur $k$, de dimension $n$, et  soit $x \in X_{lisse}(k)$.
Comme rappel\'e au \S 1 (Lemme \ref{equivalencedeuxpointsdevue}   et  remarque subs\'equente),
 on dit que $X$ {\it admet une d\'ecomposition de Chow de la diagonale} s'il existe un diviseur $D\subset X$ et  un cycle $Z \in 
Z_{n}(X\times X)$ \`a support dans
$D\times X$   tels que
\begin{equation}\label{defidd}
[\Delta_X]=[Z]  + [X\times x]\text{  dans }CH_n(X\times X),
\end{equation}
 o\`u pour un cycle $V$ dans $X$ on \'ecrit $[V]$ pour la classe de $V$ dans $CH(X)$.

On a le th\'eor\`eme suivant (cf. \cite[Theorem 1.1]{voisin} et appendice B ci-dessous).

\thmd\label{specialisationdiagonale}{{\it
 Soit $B$ un sch\'ema int\`egre de type fini sur un corps alg\'ebriquement clos $k$ de caract\'eristique z\'ero. Soit $X\to B$ un morphisme projectif qui admet une section $\sigma:B\to X$.
Il existe une famille d\'enombrable $\{B_i\}_{i}$ de sous-sch\'emas ferm\'es de $B$ telle que, pour tout point $b\in B(k)$, on a : \\

$b\in\bigcup_i B_i(k)\Leftrightarrow X_b$ admet une d\'ecomposition de Chow de la diagonale.\\
}}

En appliquant le lemme \ref{lieuxgeneral}, on obtient

\begin{theo}\label{iddfamille}
Soit $B$ un sch\'ema int\`egre de type fini sur un corps alg\'ebriquement clos $k$. Soit $X$ un $k$-sch\'ema int\`egre et
  $p : X\to B$ un morphisme dominant projectif.
Supposons qu'il existe un $k$-point $b_{0}\in B$, tel que la fibre $X_{b_{0}}$ n'admet pas de d\'ecomposition de Chow de la diagonale. Alors pour $b\in B$ un point tr\`es g\'en\'eral, la fibre $X_b$ n'admet pas  de d\'ecomposition de Chow de la diagonale.
\end{theo}
\begin{proof}On effectue le changement de base $X \to B$, c'est-\`a-dire que l'on consid\`ere la seconde projection
$q : X'=X\times_{B}X \to B'=X$. Il y a ici une section \'evidente. En appliquant la proposition pr\'ec\'edente \`a $X' \to B'$, on trouve une union d\'enombrable
de ferm\'es $G_{i}$ de $X'$ tels qu'il y ait d\'ecomposition de la diagonale pour $X'_{c}$, $c$ point ferm\'e de $B'$ si et seulement si
$c$ est dans l'un des $G_{i}$. Comme $X'_{c}= X_{p(c)}$,  ensemblistement la r\'eunion des $G_{i}$ 
co\"{\i}ncide avec la r\'eunion des images r\'eciproques $p^{-1}(p(G_{i}))$, et pour un point ferm\'e  $b \in B$,
il y a d\'ecomposition de la diagonale pour $X_{b}$ si et seulement si $b$  appartient 
\`a la
  r\'eunion
des ferm\'es $p(G_{i})$. L'hypoth\`ese sur $b_{0}$ assure qu'aucun des $p(G_{i})$ ne co\^{\i}ncide avec $B$.
 \end{proof}

\subsection{D\'ecomposition de la diagonale et action des correspondances sur la cohomologie de Betti d'un solide}

\begin{prop}\label{nontorsion}
Soit $Y$ un solide projectif int\`egre d\'efini sur le corps $\mathbb C$, qui admet une d\'ecomposition de Chow de la diagonale. Supposons qu'il existe  une d\'esin\-gu\-larisation $\pi:  \tilde Y \to Y$ de $Y$ telle que  pour toute composante $E_i$ du  diviseur exceptionnel $E$ de $\pi$  le groupe de cohomologie de Betti $H^2(\tilde E_i, \mathbb Z)$ d'une d\'esingularisation $\tilde E_i$ de $E_i$ n'a pas de torsion.

 Alors les groupes de cohomologie de Betti  $H^4(\tilde Y ,\Z)$ et $H^3(\tilde Y ,\Z)$
 sont sans torsion.
\end{prop}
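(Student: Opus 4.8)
The plan is to reduce everything to the torsion of $H^4$ and then exploit a decomposition of the diagonal of $\tilde Y$ obtained from that of $Y$. First I would record the purely topological reduction: since $\tilde Y$ is a smooth projective complex threefold, hence a closed oriented $6$-manifold, the universal coefficient theorem gives $\mathrm{Tors}\,H^k(\tilde Y,\Z)\cong \mathrm{Tors}\,H_{k-1}(\tilde Y,\Z)$, and Poincaré duality gives $H_{k-1}(\tilde Y,\Z)\cong H^{7-k}(\tilde Y,\Z)$. Taking $k=4$ yields $\mathrm{Tors}\,H^4(\tilde Y,\Z)\cong \mathrm{Tors}\,H^3(\tilde Y,\Z)$, so it suffices to prove that $H^4(\tilde Y,\Z)$ is torsion-free; the statement for $H^3$ then follows.

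Next I would transfer the decomposition of the diagonal. Let $U\subset Y$ be the dense open over which $\pi$ is an isomorphism, let $V=\pi^{-1}(U)$, and take the base point $x\in U$. Restricting the relation $[\Delta_Y]=[Z]+[Y\times x]$ to $U\times U$ and transporting it through $V\times V\xrightarrow{\sim}U\times U$ gives the corresponding relation in $CH_3(V\times V)$. The localization exact sequence for the smooth variety $\tilde Y\times\tilde Y$, whose open $V\times V$ has closed complement $(E\times\tilde Y)\cup(\tilde Y\times E)$, then produces in $CH_3(\tilde Y\times\tilde Y)$ a relation
\[
[\Delta_{\tilde Y}]=[\tilde Y\times x]+[Z_1]+[Z_2],
\]
where $Z_1$ is supported on $D_1\times\tilde Y$ for a divisor $D_1\subset\tilde Y$ (the union of the strict transform of $D$ with some components of $E$) and $Z_2$ is supported on $\tilde Y\times E$.

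Then I would let these classes act on $H^4(\tilde Y,\Z)$ and evaluate each term on a torsion class $\tau$. The diagonal acts as the identity; the term $\tilde Y\times x$ factors through $H^4(\mathrm{pt})=0$ and kills $\tau$. For $Z_2$, choosing a resolution $\mu:\tilde E\to E\subset\tilde Y$ (the disjoint union of the $\tilde E_i$) and writing $Z_2=(\mathrm{id}\times\mu)_*\hat Z_2$, the projection formula and the dimension count show the action factors as $H^4(\tilde Y)\to H^2(\tilde E)\xrightarrow{\mu_*}H^4(\tilde Y)$; since $H^2(\tilde E)=\bigoplus_i H^2(\tilde E_i)$ is torsion-free \emph{by hypothesis}, the image of $\tau$ there vanishes, so $(Z_2)_*\tau=0$. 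For $Z_1$, choosing a resolution $\nu:\tilde D_1\to D_1\subset\tilde Y$ and writing $Z_1=(\nu\times\mathrm{id})_*\hat Z_1$, the action factors as $H^4(\tilde Y)\xrightarrow{\nu^*}H^4(\tilde D_1)\to H^4(\tilde Y)$; since $\tilde D_1$ is a smooth projective surface, $H^4(\tilde D_1)$ is free, whence $\nu^*\tau=0$ and $(Z_1)_*\tau=0$. Adding up, $\tau=0$, so $H^4(\tilde Y,\Z)$ is torsion-free.

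The decisive point—and the reason one must pass through $H^4$ rather than argue directly on $H^3$—is the behaviour of the term $Z_1$ supported on $D_1\times\tilde Y$: it restricts cohomology to the surface $\tilde D_1$ via $\nu^*$, and in degree $4$ this lands in the torsion-free top group $H^4(\tilde D_1)$, which neutralizes the a priori uncontrolled component coming from $D$; in degree $3$ it would land in $H^3(\tilde D_1)$, whose torsion is that of $H^2(\tilde D_1)$ and is not controlled for the $D$-part. Correspondingly, the hypothesis is used \emph{only} for the term $Z_2$ on $\tilde Y\times E$, through the torsion-freeness of the $H^2(\tilde E_i)$. I expect the main technical care to lie in making these two factorizations rigorous with integral coefficients: lifting $Z_1$ and $Z_2$ along $\nu\times\mathrm{id}$ and $\mathrm{id}\times\mu$, and carrying out the degree/dimension bookkeeping so that $Z_1$ indeed acts through $H^4(\tilde D_1)$ and $Z_2$ through $H^2(\tilde E)$, the rest being the two torsion-freeness inputs and the opening duality reduction.
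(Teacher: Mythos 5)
Your strategy coincides with the paper's --- transfer the decomposition to $\tilde Y\times\tilde Y$, let the terms act on $H^4(\tilde Y,\Z)$, factor each action through the cohomology of a resolution of its support, and invoke torsion-freeness --- and your duality reduction, your localization argument, and your degree bookkeeping are all correct. The genuine gap is precisely the step you defer as ``technical care'': the identities $Z_2=(\mathrm{id}\times\mu)_*\hat Z_2$ and $Z_1=(\nu\times\mathrm{id})_*\hat Z_1$, with $\mu\colon\tilde E\to E$ and $\nu\colon\tilde D_1\to D_1$ the global resolutions of the supporting divisors, cannot in general be achieved: as identities of cycles they can provably fail, and nothing in your argument produces them even as identities of cohomology classes, which is what the factorization requires. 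The point is that an irreducible component $\mathcal Z$ of $Z_2$ need not dominate any $E_i$: its image $F=pr_2(\mathcal Z)$ may be a proper closed subset of $E_i$ lying in the locus where $\mu$ is not an isomorphism. For instance, if $E_i$ has pinch points (Whitney-umbrella singularities) along a curve $C$, the preimage $\mu^{-1}(C)=\tilde C\subset\tilde E_i$ can be an irreducible curve mapping onto $C$ with degree $2$; if $\mathcal Z=S\times C$ with $S\subset\tilde Y$ a surface, then any irreducible threefold of $\tilde Y\times\tilde E_i$ dominating $S\times C$ is contained in $(\mathrm{id}\times\mu)^{-1}(S\times C)=S\times\tilde C$, hence equals $S\times\tilde C$ and maps with degree $2$, so every cycle pushed forward from $\tilde Y\times\tilde E_i$ contains $S\times C$ with an even coefficient: $\mathcal Z$ has no lift. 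Since this factorization is exactly where the hypothesis on $H^2(\tilde E_i,\Z)$ enters, the gap sits at the heart of the proof. (Your assumption that the base point $x$ can be taken in $U$ is also unjustified --- the decomposition of $\Delta_Y$ comes with some $x\in Y_{lisse}$ --- but that one is harmless: if $x\notin U$, the term $Y\times x$ restricts to zero on $U\times U$ and simply disappears from the transferred relation.)

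The paper avoids the problem by arguing component by component and resolving the \emph{image} of each component rather than the ambient divisor. For a component $\mathcal Z$ of $Z_2$ with image $F\subseteq E_i$ of codimension $c\geq 0$ in $E_i$, one takes a desingularization $\pi_F\colon\tilde F\to F$ of $F$ itself: since $\mathcal Z$ dominates $F$ by construction, its strict transform $\mathcal Z'\subset\tilde Y\times\tilde F$ does satisfy $(\mathrm{id}\times\pi_F)_*[\mathcal Z']=[\mathcal Z]$, and the same projection-formula diagram factors $\mathcal Z_*$ through $H^{2-2c}(\tilde F,\Z)$. This group is torsion-free in every case: for $c=0$ it is $H^2$ of a desingularization of $E_i$ (the hypothesis), for $c=1$ it is $H^0$ of a smooth projective curve, hence $\Z$, and for $c=2$ it vanishes. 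Likewise each component of your $Z_1$ acts through $H^4(\tilde F,\Z)$ with $F$ its image in $D_1$, which is $\Z$ or $0$; no hypothesis is needed on that side, exactly as you observed. With this component-wise formulation in place of the global lifts, your argument becomes the paper's proof.
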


\begin{proof}
D'apr\`es les hypoth\`eses,  on peut  \'ecrire une \'equivalence rationnelle de   cycles  sur $\tilde Y \times_{k} \tilde Y:$
$$\Delta_{\tilde Y } \equiv Z+ \tilde Y \times x +Z_1+Z_2  $$
o\`u $Z\subset \tilde Y  \times \tilde Y $ est \`a support dans $D\times \tilde Y $ pour $D\subset \tilde Y $ un diviseur,
$Z_1$ est \`a support dans $E\times \tilde Y $ et $Z_2$ est \`a support dans $\tilde Y \times E$. Comme la vari\'et\'e $\tilde Y $ est lisse,  on dispose d'une action des correspondances sur  $H^4(\tilde Y ,\mathbb Z)$.

 Soit $\mathcal Z$ une composante irr\'eductible de $Z_2$; l'image de $\mathcal Z$ par la deuxi\`eme projection est contenue dans un ferm\'e $F\subseteq E_i$ de codimension $c\geq 0$, pour un  certain $i$. Soit $\pi_F:\tilde F\to F$ une d\'esingularisation de $F$. Comme $\mathcal Z$ domine $F$, on peut supposer que $\mathcal Z=\pi_{F,*}\mathcal Z'$ pour 
un certain
 $\mathcal Z'\subset \tilde Y \times \tilde F$. Montrons que l'action de $\mathcal Z$ se factorise par  le groupe $H^{2-2c}(\tilde F,\mathbb Z)$.
Soit $\iota:\tilde F\to \tilde Y $. On a 
le diagramme commutatif suivant, o\`u le carr\'e du milieu commute d'apr\`es la formule de projection (cf. \cite[Chap. 5, \S 6]{spanier}):

\footnotesize
$$\xymatrix{ H^4(\tilde Y , \Z) \ar[r]^(0.4){pr_1^*}\ar@{=}[d]& H^4(\tilde Y \times \tilde F, \Z)\ar[r]^(0.5){\cdot [\mathcal Z']} & H^{8-2c}(\tilde Y \times \tilde F, \Z)\ar[r]^{ pr_{2,*}}\ar[d]^{\iota_*}&H^{2-2c}(\tilde F, \Z)\ar[d]^(0.6){\iota_*}\\
H^4(\tilde Y , \Z) \ar[r]^(0.4){ pr_1^*}&H^4(\tilde Y \times \tilde Y , \Z)\ar[r]^(0.5){\cdot \iota_*[\mathcal Z']} \ar[u]^{\iota^*}& H^{10}(\tilde Y \times  \tilde Y , \Z)\ar[r]^(0.6){pr_{2,*}}&H^{4}(\tilde Y , \Z).}$$
\normalsize
Le groupe $H^{2-2c}(\tilde F,\mathbb Z)$ n'a pas de torsion : si $c=0$, c'est l'hypoth\`ese de la proposition, et si $c>0$,
soit  ce groupe est nul, soit c'est  le groupe $H^0$ d'une courbe lisse et il est donc isomorphe \`a $\Z$. L'application $Z_{2,*}$
est donc nulle sur $H^4(\tilde Y ,\mathbb Z)_{tors}$. De m\^eme, l'application $[\tilde Y\times x]_*$ est nulle sur $H^4(\tilde Y ,\mathbb Z)_{tors}$.

De la m\^eme mani\`ere, d'apr\`es le diagramme commutatif ci-dessous l'action de chaque composante de $Z_1$  qui est \`a support dans $E_i\times \tilde Y $ se factorise par le groupe $H^4(\tilde F,\mathbb Z)$ pour $F\subseteq E_i$ 
et $\tilde F\to F$ une d\'esingularisation de $F$ :
\footnotesize
$$\xymatrix{ H^4(\tilde F, \Z) \ar[r]^(0.4){pr_1^*}& H^4(\tilde F \times \tilde Y , \Z)\ar[r]^(0.5){\cdot  [\mathcal Z']} & H^{8-2c}(\tilde F\times \tilde Y, \Z)\ar[r]^{ pr_{2,*}}\ar[d]^{\iota_*}&H^{4}(\tilde Y , \Z)\ar@{=}[d]\\
H^4(\tilde Y , \Z) \ar[r]^(0.4){ pr_1^*}\ar[u]^{\iota^*}&H^4(\tilde Y \times \tilde Y , \Z)\ar[r]^(0.5){\cdot \iota_* [\mathcal Z']} \ar[u]^{\iota^*}& H^{10}(\tilde Y \times  \tilde Y , \Z)\ar[r]^(0.6){pr_{2,*}}&H^{4}(\tilde Y , \Z).}$$
\normalsize
Le groupe $H^4(\tilde F,\mathbb Z)$  est  isomorphe \`a $\mathbb Z$ ou nul, donc sans torsion, car   
 $\tilde F$   ou bien est  une surface lisse, ou bien    est de dimension au plus un. L'application $Z_{1,*}$
est donc nulle sur $H^4(\tilde Y ,\mathbb Z)_{tors}$.
De m\^eme, l'application $Z_*$ se factorise par un groupe  $H^4(\tilde F,\mathbb Z)$ pour $F\subseteq D$ un ferm\'e et $\tilde F\to F$ une d\'esingularisation de $F$; l'application $Z_*$ est donc nulle sur  le groupe de torsion $H^4(\tilde Y ,\mathbb Z)_{tors}$. 

Comme l'application $\Delta_{\tilde Y _{*}}$ est l'identit\'e,  le groupe  $H^4(\tilde Y ,\mathbb Z)_{tors}$ est nul.
Il en est donc de m\^eme de $H^3(\tilde Y ,\mathbb Z)_{tors}$.
\end{proof}

\begin{rema}
On notera que les hypoth\`eses faites sur le diviseur exceptionnel de la d\'esingularisation $\pi:  \tilde Y \to Y$ sont
a priori plus faibles que la $CH_{0}$-trivialit\'e universelle demand\'ee au \S \ref{algebrique}.
\end{rema}

 \begin{theo}\label{quartiques2}
 Dans l'espace projectif $\P^N$ param\'etrant les hypersurfaces quartiques dans $\P^4_{\C}$, un point tr\`es g\'en\'eral correspond \`a une hypersurface quartique lisse qui n'est pas r\'etracte rationnelle, et  qui en particulier n'est pas stablement rationnelle.
\end{theo}
\begin{proof}

Dans l'appendice A, nous partons d'un solide quartique $V$, d\'efini par une \'equation
\begin{equation}
\label{V}
 \alpha(z_0,z_1,z_2)z_3^2+\beta(z_0,z_1,z_2)z_3+\gamma(z_0,z_1,z_2)+z_0^2z_4^2=0,
\end{equation}
birationnel \`a un solide d'Artin-Mumford \cite{artinmumford}. Nous construisons une d\'esin\-gu\-larisation
$W\to  V$ satisfaisant :

  (i) Les composantes irr\'eductibles du diviseur  exceptionnel $E$ de $W \to V$  sont des  surfaces rationnelles 
  universellement $CH_{0}$-triviales (non n\'ecessairement lisses).

 (ii) le groupe $H^4(W,\Z)_{tors} \simeq  H^3(W,\Z)_{tors}$ est non nul (ceci est donn\'e par \cite{artinmumford}).

On applique le th\'eor\`eme \ref{iddfamille} avec $B=\P^N$ et $b_0$ le point correspondant \`a une quartique $V$ comme dans (\ref{V}).  D'apr\`es la 
proposition \ref{nontorsion}, la quartique $V$ n'a pas de d\'ecomposition de Chow de la diagonale car le groupe $H^3(W,\Z)_{tors} \simeq \Br(W)$ est non nul. L'\'enonc\'e suit alors des lemmes \ref{retractCh0trivial} et  \ref{equivalencedeuxpointsdevue}~a).
\end{proof}

\appendix
\section{R\'esolution des singularit\'es d'une quartique  $V \subset \P^4_{\overline{\Q}}$ 
birationnelle \`a un solide d'Artin--Mumford.}

Soit $A\subset \mathbb P^2$ une conique lisse, d\'efinie par une \'equation $\alpha(z_0,z_1,z_2)=0$.
Soient $E_1, E_2\subset  \mathbb P^2$ deux courbes elliptiques lisses d\'efinies par des \'equations $\epsilon_1(z_0,z_1, z_2)=0$ et $\epsilon_2(z_0, z_1, z_2)=0$, chacune tangente \`a $A$ en trois points, les points de tangence \'etant tous distincts,
et telles que les courbes $E_{1}$ et $E_{2}$ s'intersectent en 9 points, deux \`a deux distincts, et distincts des pr\'ec\'edents.

D'apr\`es \cite{artinmumford}, sur $\overline{\Q} \subset \C$,
 il existe deux formes homog\`enes $\beta(z_0, z_1, z_2)$ et $\gamma(z_0, z_1, z_2)$, de degr\'es respectifs $3$ et $4$, telles que $$\beta^2-4\alpha\gamma=\epsilon_1\epsilon_2.$$

\begin{lem}\label{surfacequartique}
Soit $S\subset\mathbb P^3_{\overline{\Q}}$ la surface quartique d\'efinie
par l'\'equation homog\`ene
\begin{equation}\label{S}
g=\alpha(z_0,z_1,z_2)z_3^2+\beta(z_0,z_1,z_2)z_3+\gamma(z_0, z_1, z_2)=0.
\end{equation}

 (i) Les singularit\'es de la surface $S$ sont des 
singularit\'es quadratiques 
 ordinaires
  : le point $P_0=(0:0:0:1)$ et les neuf points dont les projections depuis $P_0$ sur le plan $z_3=0$ sont les points de $E_1\cap E_2$.

 (ii) L'ensemble $M=\{g=0, \frac{\partial g}{\partial z_1}=0, \frac{\partial g}{\partial z_3}=0\}\cup \{g=0, \frac{\partial g}{\partial z_2}=0, \frac{\partial g}{\partial z_3}=0\}$ est fini.
\end{lem}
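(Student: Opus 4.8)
L'idée est d'analyser $S$ via la projection de centre $P_0=(0:0:0:1)$ sur le plan $\{z_3=0\}$, qui présente $S$ comme un revêtement double de $\mathbb{P}^2$. En complétant le carré on dispose de l'identité
$$4\alpha^2 g=(2\alpha z_3+\beta)^2-(\beta^2-4\alpha\gamma)=(2\alpha z_3+\beta)^2-\epsilon_1\epsilon_2,$$
de sorte que, sur l'ouvert $\{\alpha\neq 0\}$, dans la coordonnée $w=2\alpha z_3+\beta$ la surface $S$ s'écrit $\{w^2=\epsilon_1\epsilon_2\}$ : c'est le revêtement double de $\mathbb{P}^2$ ramifié le long de la sextique discriminante $\{\epsilon_1\epsilon_2=0\}=E_1\cup E_2$. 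Je note $g_i=\partial g/\partial z_i$ ; par la relation d'Euler, un point de $S$ est singulier si et seulement si $g_0,\dots,g_3$ y sont tous nuls.

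Pour (i), je traiterais d'abord l'ouvert $\{\alpha\neq 0\}$ : un point de $S$ y est singulier si et seulement s'il est au-dessus d'un point singulier du lieu de ramification $E_1\cup E_2$. Comme $E_1$ et $E_2$ sont lisses et se coupent en $9$ points distincts, donc transversalement, les seules singularités de $E_1\cup E_2$ sont $9$ points doubles ordinaires, tous situés dans $\{\alpha\neq 0\}$ (aucun de ces $9$ points n'est sur $A$, puisque $E_i\cap A$ ne contient que des points de tangence). Localement $\epsilon_1\epsilon_2=xy$, donc $S$ s'écrit $w^2=u\,xy$ avec $u$ inversible : c'est une singularité quadratique ordinaire. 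On obtient ainsi les $9$ singularités annoncées, une au-dessus de chaque point de $E_1\cap E_2$, avec $z_3=-\beta/(2\alpha)$. Ensuite $P_0\in S$ (car $\alpha,\beta,\gamma$ s'annulent à l'origine du plan), et dans la carte $z_3=1$ le terme de plus bas degré de $g=\alpha+\beta+\gamma$ est la forme quadratique non dégénérée $\alpha$ : le cône tangent en $P_0$ est un cône quadratique lisse, donc $P_0$ est une singularité quadratique ordinaire. Il reste à examiner $\{\alpha=0\}$ hors de $P_0$ : là $g_3=2\alpha z_3+\beta=\beta$, donc un point singulier impose $\beta=0$, et sur $A$ la relation $\beta^2=\epsilon_1\epsilon_2$ montre que $\beta$ ne s'annule qu'aux points de tangence. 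Le point délicat est de les écarter : j'affirme que $\gamma\neq 0$ en chaque point de tangence $p$. En coordonnées locales $(x,y)$ avec $A=\{y=0\}$ (soit $\alpha=y\,u_0$, $u_0$ inversible) et la tangente à $E_1$ également $\{y=0\}$, la fonction $\beta^2-\epsilon_1\epsilon_2$ s'annule sur $\{y=0\}$ et s'écrit $y\,h$, d'où $\gamma=(\beta^2-\epsilon_1\epsilon_2)/(4\alpha)=h/(4u_0)$ et $h(p)=\partial_y(\beta^2-\epsilon_1\epsilon_2)(p)=-\partial_y\epsilon_1(p)\,\epsilon_2(p)\neq 0$, en utilisant $\epsilon_1(p)=\beta(p)=0$, la lissité de $E_1$ et $p\notin E_2$. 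Ainsi aucun point de $S$ autre que $P_0$ n'est au-dessus d'un point de tangence, ce qui achève la liste des singularités.

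Pour (ii), je poserais $R=\{g=0,\;g_3=0\}$, le lieu de ramification. Sur $\{\alpha\neq 0\}$ on a $z_3=-\beta/(2\alpha)$ et $\delta:=\beta^2-4\alpha\gamma=\epsilon_1\epsilon_2=0$ sur $R$, donc $R$ s'envoie isomorphiquement sur $(E_1\cup E_2)\cap\{\alpha\neq 0\}$ ; sur $\{\alpha=0\}$, le (i) montre que $R$ ne rencontre les fibres qu'en $P_0$. Ainsi $R=\widetilde E_1\cup\widetilde E_2\cup\{P_0\}$, où $\widetilde E_i$ est le relevé de $E_i$. Le calcul clé est l'identité, valable sur $R\cap\{\alpha\neq0\}$,
$$4\alpha^2 g_i=\delta\,\partial_i\alpha-\alpha\,\partial_i\delta\qquad(i=0,1,2),$$
obtenue en substituant $z_3=-\beta/(2\alpha)$ et en éliminant $\gamma$ grâce à $4\alpha\gamma=\beta^2-\delta$ ; comme $\delta=0$ sur $R$, il vient $4\alpha\,g_i=-\partial_i\delta$ sur $R$. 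Par conséquent $M_j:=\{g=g_j=g_3=0\}=R\cap\{g_j=0\}$ est fini si et seulement si $\partial_j\delta$ ne s'annule identiquement ni sur $\widetilde E_1$ ni sur $\widetilde E_2$. Or sur $E_1$ on a $\partial_j\delta=(\partial_j\epsilon_1)\,\epsilon_2$ ; ici $\epsilon_2|_{E_1}\not\equiv0$ (car $E_1\neq E_2$), et $\partial_j\epsilon_1|_{E_1}\not\equiv0$ car $\epsilon_1$, irréductible de degré $3$, ne saurait diviser $\partial_j\epsilon_1$, de degré $2$ ; le produit est donc non nul sur la courbe intègre $E_1$. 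Il en va de même sur $E_2$, donc $M_1$ et $M_2$ sont finis, ainsi que $M=M_1\cup M_2$.

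Le principal obstacle sera l'analyse au-dessus de la conique $A$ : il faut vérifier que l'annulation de $g_3$ qui y est forcée (par $\alpha=\beta=0$ aux points de tangence) ne crée ni singularité supplémentaire en (i), ni composante supplémentaire de $R$ en (ii) ; les deux se ramènent à la non-annulation $\gamma(p)\neq 0$, qui repose sur l'hypothèse de tangence via le calcul local ci-dessus. L'autre point subtil est l'identité $4\alpha\,g_i=-\partial_i\delta$ sur $R$, qui transforme (ii) en l'énoncé transparent que $\partial_{z_j}(\epsilon_1\epsilon_2)$ n'est pas identiquement nul sur les cubiques lisses $E_1$ et $E_2$.
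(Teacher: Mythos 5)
Votre d\'emonstration est correcte. Pour la partie (ii) --- la seule que l'article d\'emontre effectivement --- vous suivez essentiellement la m\^eme voie que lui : tout repose sur l'identit\'e $4\alpha g=(2\alpha z_3+\beta)^2-\epsilon_1\epsilon_2$, d\'eriv\'ee par rapport \`a $z_j$ sur le lieu $\{g=g_3=0\}$ pour obtenir $\partial_{z_j}(\epsilon_1\epsilon_2)(q)=0$, puis la conclusion par finitude de $E_1\cap E_2$ et de $\{\epsilon_1=0,\ \partial_{z_j}\epsilon_1=0\}$ (lissit\'e de $E_1$). Votre pr\'esentation globale via la courbe de ramification $R$ et l'identit\'e $4\alpha\, g_i=-\partial_i\delta$ sur $R$ est le m\^eme calcul sous une autre forme. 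Seul le traitement du cas $\alpha(q)=0$ diff\`ere : l'article observe que $\alpha(q)=\beta(q)=\gamma(q)=0$ force la multiplicit\'e de $\epsilon_1\epsilon_2$ en $q$ \`a \^etre au moins $2$, donc $q\in E_1\cap E_2$, ce qui contredit le fait que $A$ ne passe pas par $E_1\cap E_2$ ; vous passez, vous, par les points de tangence et la non-annulation $\gamma(p)\neq 0$. Les deux arguments sont valables, celui de l'article \'etant plus court. La vraie diff\'erence concerne la partie (i) : l'article se borne \`a citer Artin et Mumford, alors que vous en donnez une preuve compl\`ete et correcte (rev\^etement double de $\P^2$ ramifi\'e le long de $E_1\cup E_2$ au-dessus de $\{\alpha\neq 0\}$, c\^one tangent d\'efini par la forme non d\'eg\'en\'er\'ee $\alpha$ en $P_0$, et calcul local donnant $\gamma(p)\neq 0$ aux points de tangence, ce qui exclut toute singularit\'e au-dessus de $A$) ; c'est un apport r\'eel, votre calcul en les points de tangence servant d'ailleurs aussi \`a la partie (ii).

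Deux d\'etails \`a corriger. D'abord, dans votre premi\`ere identit\'e, le membre de gauche doit \^etre $4\alpha g$ et non $4\alpha^2 g$ ; c'est une coquille sans cons\'equence, car la relation correcte $4\alpha\gamma=\beta^2-\epsilon_1\epsilon_2$ est bien celle que vous utilisez ensuite, et l'identit\'e $4\alpha^2 g_i=\delta\,\partial_i\alpha-\alpha\,\partial_i\delta$, obtenue apr\`es substitution de $z_3=-\beta/(2\alpha)$, est, elle, exacte. Ensuite, l'argument selon lequel $\epsilon_1$, irr\'eductible de degr\'e $3$, ne saurait diviser $\partial_{z_j}\epsilon_1$, de degr\'e $2$, suppose implicitement que $\partial_{z_j}\epsilon_1$ n'est pas le polyn\^ome nul ; c'est vrai, car une courbe plane d\'efinie par une \'equation ind\'ependante de $z_j$ est un c\^one, donc singuli\`ere en son sommet, ce qui contredirait la lissit\'e de $E_1$ --- mais cela m\'eriterait d'\^etre dit.
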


\begin{proof}
La partie  (i)  est dans \cite{artinmumford}. Montrons  (ii).
Soit $Q=(z_0:z_1:z_2:z_3)\in M\setminus\{P_0\}$. Soit $q=(z_0:z_1:z_2)$ la projection de $Q$ sur le plan $z_3=0$. Supposons  $\frac{\partial g}{\partial z_1}(Q)=0$, le deuxi\`eme cas est identique. La condition $\frac{\partial g}{\partial z_3}(Q)=0$ donne $\beta(q)=-2z_3\alpha(q)$. Montrons que l'on a  $\alpha(q)\neq 0$. Sinon, la condition pr\'ec\'edente donne $\beta(q)=0$ et donc $\gamma(q)=0$
d'apr\`es l'\'equation. Ainsi la multiplicit\'e de $\epsilon_1\epsilon_2$ en $q$ est $2$. Comme $E_1$ et $E_2$ sont lisses, on en d\'eduit que $Q\in E_1\cap E_2$ et on obtient une contradiction car $A$ ne passe pas par  $E_1\cap E_2$. On a donc $\alpha(q)\neq 0$ et $z_3=-\frac{\beta(q)}{2\alpha(q)}$.

On \'ecrit
\begin{equation}\label{alphag}
4\alpha\cdot g=(2z_3\alpha+\beta)^2-\epsilon_1\epsilon_2.
\end{equation}

Comme $\beta(q)=-2z_3\alpha(q)$, on a donc $\epsilon_1\epsilon_2(q)=0$. On peut supposer $\epsilon_1(q)=0$, le deuxi\`eme cas est similaire.  Comme $\alpha(q)\neq 0$, on a $z_3=-\frac{\beta(q)}{2\alpha(q)}$. En d\'erivant l'\'equation (\ref{alphag}) par rapport \`a $z_1$, on d\'eduit  $\frac{\partial g}{\partial z_1}(Q)=0\Rightarrow \frac{\partial \epsilon_1\epsilon_2}{\partial z_1}(q)=0$. Comme $\epsilon_1(q)=0$, on a soit  $\epsilon_2(q)=0$, soit $\frac{\partial \epsilon_1}{\partial z_1}(q)=0$.   Dans le premier cas,   $q$ appartient \`a l'ensemble fini $E_1\cap E_2$. Sinon $q$  appartient \`a l'ensemble $\{\epsilon_1=0, \frac{\partial \epsilon_1}{\partial z_1}=0\}$, qui est fini car $E_1$ est une courbe elliptique lisse.

\end{proof}

Comme l'ensemble $M\setminus\{P_0\}$ est fini, quitte \`a 
 faire
un changement lin\'eaire en les coordonn\'ees $z_0, z_1, z_2$, on peut supposer dans la suite :

\begin{multline}\label{bonhyperplan}
\text{ L'hyperplan }z_0=0\text{ ne contient aucun point de }M\setminus\{P_0\}   \text{ ni aucun point}  \\ \text{de l'intersection de }A \text{\ et }E_1\cup E_2\text{, et  il n'est pas tangent \`a la conique }A.
\end{multline}

Soit $V\subset \mathbb P^4_{\overline{\Q}}$ une quartique d\'efinie par l'\'equation homog\`ene
 \begin{equation}
f=\alpha(z_0,z_1,z_2)z_3^2+\beta(z_0,z_1,z_2)z_3+\gamma(z_0,z_1,z_2)+z_0^2z_4^2=0.
 \end{equation}

Soit $L\subset \mathbb P^4$ la droite d'\'equation $z_0=z_1=z_2=0$.

 \begin{lem}
  Les  points singuliers de la quartique $V$ qui ne sont pas situ\'es sur la droite $L$ sont  $9$  
  singularit\'es quadratiques 
 ordinaires.
 \end{lem}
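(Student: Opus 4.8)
The plan is to compute the gradient of $f$ and isolate the effect of the extra monomial $z_0^2z_4^2$. Writing $f = g(z_0,z_1,z_2,z_3) + z_0^2z_4^2$, where $g$ is the quartic of \eqref{S}, one has $\frac{\partial f}{\partial z_i} = \frac{\partial g}{\partial z_i}$ for $i = 1,2,3$, while $\frac{\partial f}{\partial z_0} = \frac{\partial g}{\partial z_0} + 2z_0z_4^2$ and $\frac{\partial f}{\partial z_4} = 2z_0^2z_4$. A singular point of $V$ annihilates all five partials (and then $f = 0$ by Euler, since we are in characteristic $0$). The equation $\frac{\partial f}{\partial z_4} = 2z_0^2z_4 = 0$ splits the discussion into the cases $z_0 = 0$ and $z_4 = 0$.

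First I would rule out the case $z_0 = 0$ off $L$. If $z_0 = 0$, then $\frac{\partial f}{\partial z_4}$ vanishes automatically, and the surviving equations together with $f = 0$ say that $q := (0:z_1:z_2:z_3) \in \mathbb{P}^3$ satisfies $g = 0$, $\frac{\partial g}{\partial z_1} = \frac{\partial g}{\partial z_2} = \frac{\partial g}{\partial z_3} = 0$; in particular $q$ lies in the finite set $M$ of Lemme \ref{surfacequartique}(ii). By the normalization \eqref{bonhyperplan}, the hyperplane $z_0 = 0$ meets $M$ only at $P_0 = (0:0:0:1)$, so $z_1 = z_2 = 0$ and the original point lies on $L$. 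Hence every singular point off $L$ has $z_4 = 0$.

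When $z_4 = 0$, the monomial $z_0^2z_4^2$ and its first derivatives vanish, so the equations reduce to $g = 0$ and $\frac{\partial g}{\partial z_i} = 0$ for $i = 0,1,2,3$: the projection $q := (z_0:z_1:z_2:z_3)$ is a singular point of the surface $S$. By Lemme \ref{surfacequartique}(i) these are $P_0$ and the nine ordinary quadratic points over $E_1 \cap E_2$. The point $P_0$ lifts to $(0:0:0:1:0) \in L$ and is discarded, while each of the nine others lies in $M \setminus \{P_0\}$, hence has $z_0 \neq 0$ by \eqref{bonhyperplan}, and lifts to a point $Q = (q,0)$ of $V$ lying off $L$.

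It remains to verify that these nine lifts are ordinary quadratic singularities, and for this I would pass to the affine chart $z_0 = 1$, legitimate since $z_0 \neq 0$ at $q$. There $V$ is defined by $\tilde f = \tilde g(z_1,z_2,z_3) + z_4^2$, with $\tilde g$ the affine equation of $S$. As $q$ is a node of $S$, the quadratic part of $\tilde g$ at $q$ is a nondegenerate ternary form, so the affine Hessian of $\tilde f$ at $(q,0)$ is block-diagonal with a rank-$3$ block and the entry $2$ in the $z_4$-direction; it has rank $4$, so $Q$ is a node of $V$. This is the standard remark that suspending a node by a square remains a node. The only delicate points in the whole argument are thus the two bookkeeping reductions to \eqref{bonhyperplan}: that $z_0 = 0$ yields nothing off $L$, and that the nine surviving points avoid $z_0 = 0$, which is exactly what makes the chart $z_0 = 1$ and the $z_4^2$-suspension available.
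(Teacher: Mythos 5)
Your proof is correct and follows essentially the same route as the paper's: the same case split coming from $\frac{\partial f}{\partial z_4}=2z_0^2z_4$, the same appeal to the normalization (\ref{bonhyperplan}) and to the finite set $M$ of Lemme \ref{surfacequartique} to force $z_0\neq 0$ off $L$, and the same suspension argument (node of $S$ plus $z_4^2$ gives a node of $V$), which the paper phrases via local coordinates $\sum_{i}\mathfrak z_i^2+z_4^2+\text{higher order terms}$ rather than via a block-diagonal Hessian of rank $4$. Your explicit check that the nine nodes of $S$ avoid $z_0=0$ and do lift to singular points of $V$ off $L$ only makes precise what the paper leaves implicit.
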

 \begin{proof}
 Soit $Q=(z_0:z_1:z_2:z_3:z_4)\in V\setminus L$ un point singulier. Si $z_0=0$, alors  le point $(z_0:z_1:z_2:z_3)$ est un point singulier de $S$. D'apr\`es le choix de l'hyperplan $z_0=0$, c'est le point $(0:0:0:1)\in L$. On a donc $z_0\neq 0$. On a $\frac{\partial f}{\partial z_4}=2z_0^2z_4=0$, d'o\`u $z_4=0$ et le point $(z_0:z_1:z_2:z_3)$ correspond encore \`a un point singulier de la surface $S$ d\'efinie par (\ref{S}). Comme la surface $S$ n'a que
 des singularit\'es quadratiques  
  ordinaires
  d'apr\`es le lemme \ref{surfacequartique}, on a des coordonn\'ees locales $\mathfrak z_i$, $i=1\ldots 3$ telles que l'\'equation locale de $g$ s'\'ecrive comme une somme de $\sum_{i=1}^3\mathfrak z_i^2$ et de termes de plus haut degr\'e. Comme $z_0\neq 0$, on a les coordonn\'ees locales $\mathfrak z_1,\ldots \mathfrak z_3, z_4$ de $Q$ et l'\'equation de $f$ s'\'ecrit comme  $\sum_{i=1}^3\mathfrak z_i^2+z_4^2+\text{termes de degr\'e sup\'erieur}$;   on a donc  que $Q$ est un point singulier quadratique
   ordinaire.
 \end{proof}

 Soit $V'\to V$ l'\'eclatement de la droite $L\subset V$. Soit $P\in L$ le point $z_3=0$.

 \begin{lem}
  La vari\'et\'e $V'$ est lisse en tout point de $V'$ au-dessus de $L\setminus P$. Le diviseur exceptionnel $E'$ de $V'\to V$ est une surface rationnelle, les fibres de l'application $E'\to L$ sont des coniques et la fibre g\'en\'erique est lisse.
 \end{lem}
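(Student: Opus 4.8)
The plan is to carry out everything in the standard affine charts of the blow-up, after a dehomogenisation chosen so that the chart covers exactly $L\setminus P$. Since $P=(0:0:0:0:1)$ is the point $z_3=0$ of $L=\{z_0=z_1=z_2=0\}$, the affine chart $z_3=1$ covers precisely $L\setminus P$; there $V$ is given by $f_{\mathrm{aff}}=\alpha(z_0,z_1,z_2)+\beta(z_0,z_1,z_2)+\gamma(z_0,z_1,z_2)+z_0^2z_4^2=0$ and $L$ is the $z_4$-axis $\{z_0=z_1=z_2=0\}$. The first observation is that, ordering $f_{\mathrm{aff}}$ by its vanishing along $L$ (its degree in the ideal $(z_0,z_1,z_2)$), only $\alpha$ and $z_0^2z_4^2$ are of order $2$, while $\beta$ and $\gamma$ are of orders $3$ and $4$. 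Hence the projectivised tangent cone of $V$ along $L$, i.e. the exceptional divisor, is globally the bidegree-$(2,2)$ hypersurface $E'=\{\alpha(z_0,z_1,z_2)\,z_3^2+z_0^2z_4^2=0\}$ inside $\mathbb{P}^1_{(z_3:z_4)}\times\mathbb{P}^2_{(z_0:z_1:z_2)}$. This already identifies the fibres of $E'\to L=\mathbb{P}^1_{(z_3:z_4)}$ as the conics $\alpha\,z_3^2+z_0^2z_4^2=0$ of $\mathbb{P}^2$.

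For the smoothness of $V'$ over $L\setminus P$ I would cover the exceptional $\mathbb{P}^2$ by its three standard charts and write the strict transform in each. In the chart $u_0=1$ (with $z_1=z_0v_1$, $z_2=z_0v_2$) one divides $f_{\mathrm{aff}}$ by $z_0^2$ to get $\tilde f=\alpha(1,v_1,v_2)+z_4^2+z_0\,\beta(1,v_1,v_2)+z_0^2\gamma(1,v_1,v_2)$, the exceptional divisor being $z_0=0$. Along $z_0=0$ the derivative $\partial\tilde f/\partial z_4=2z_4$ is nonzero once $z_4\neq 0$, so smoothness there is automatic; the remaining relevant partials are $\beta(1,v_1,v_2)$ together with $\alpha_{z_1}(1,v_1,v_2)$ and $\alpha_{z_2}(1,v_1,v_2)$, where $\alpha_{z_i}=\partial\alpha/\partial z_i$. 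If $V'$ were singular at such a point with $z_4=0$, one would have $\alpha=\alpha_{z_1}=\alpha_{z_2}=0$ at $(1,v_1,v_2)$, and Euler's relation $2\alpha=z_0\alpha_{z_0}+z_1\alpha_{z_1}+z_2\alpha_{z_2}$ would force $\alpha_{z_0}=0$ as well, contradicting the nondegeneracy of the smooth conic $A=\{\alpha=0\}$. In the charts $u_1=1$ and $u_2=1$ it suffices to treat the points with $u_0=0$ (the others overlap the first chart); there the $z_4$-derivative vanishes identically, but the very same Euler argument rules out singular points for \emph{every} value of $z_4$. This chart-by-chart computation is the technical heart of the lemma, and its crux is exactly this Euler-relation argument: the extra monomial $z_0^2z_4^2$ perturbs $\alpha$ without ever destroying its nondegeneracy over $L\setminus P$, whereas over $P$ (where $z_3=0$) the conic degenerates to the double line $z_0^2=0$ and singularities do appear, which is precisely why $P$ must be excluded.

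For the generic fibre of $E'\to L$ I would observe that the conic $\alpha(z_0,z_1,z_2)+\lambda^2z_0^2=0$, with $\lambda=z_4/z_3$, has Gram matrix $M_\alpha+\lambda^2\,\mathrm{diag}(1,0,0)$, whose determinant is a nonzero polynomial in $\lambda^2$ since it specialises to $\det M_\alpha\neq 0$ at $\lambda=0$; hence for all but finitely many $\lambda$ the fibre is a smooth conic. For the rationality of $E'$ I would use hypothesis (\ref{bonhyperplan}): since the hyperplane $z_0=0$ is not tangent to $A$, the intersection $A\cap\{z_0=0\}$ consists of two distinct points $(0:b_1:b_2)$, and each satisfies $\alpha(0,b_1,b_2)\,z_3^2+0=0$ identically, hence defines a constant section $(z_3:z_4)\mapsto\bigl((z_3:z_4),(0:b_1:b_2)\bigr)$ of the conic bundle $E'\to\mathbb{P}^1$. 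A conic bundle over $\mathbb{P}^1$ whose generic fibre is a smooth conic carrying a rational point is birational to $\mathbb{P}^1\times\mathbb{P}^1$, so $E'$ is a rational surface. I expect the only genuine labour to be the chart-by-chart smoothness verification; the statements on the fibres and on rationality then follow directly from the explicit equation of $E'$.
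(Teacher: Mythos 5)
Your proof is correct, and its technical heart coincides with the paper's: one passes to affine charts of the blow-up and rules out singular points of $V'$ above $L\setminus P$ by showing that such a point would force $\alpha$ and all its partial derivatives to vanish at a point of $\mathbb{P}^2$, contradicting the smoothness of the conic $A$ --- the Euler relation you invoke is exactly what the paper uses implicitly each time it concludes that one would get a singular point of $A$. You organize the check more economically: since only smoothness above $L\setminus P$ is claimed, you work entirely in the dehomogenization $z_3=1$ and, by the $z_1\leftrightarrow z_2$ symmetry, reduce to the chart $u_0=1$ plus the locus $u_0=0$ of the other two exceptional charts; the paper also runs through the chart $z_4\neq 0$, which is where the actual singular line above $P$ shows up. The genuine differences are in the secondary claims. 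First, you write $E'$ globally as the bidegree-$(2,2)$ hypersurface $\{\alpha(z_0,z_1,z_2)z_3^2+z_0^2z_4^2=0\}\subset \mathbb{P}^1_{(z_3:z_4)}\times\mathbb{P}^2_{(z_0:z_1:z_2)}$ (the projectivized normal cone of $V$ along $L$), which the paper only ever presents chart by chart; this makes the conic-bundle structure, and the degeneration to the double line over $P$, immediately visible. Second, you obtain smoothness of the generic fibre from the Gram determinant $\det\bigl(M_\alpha+\lambda^2\,\mathrm{diag}(1,0,0)\bigr)$, a nonzero polynomial in $\lambda$ because it specializes to $\det M_\alpha\neq 0$ at $\lambda=0$, whereas the paper redoes a singular-point computation over $\C(y_3)$, resp.\ $\C(y_4)$. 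Third, for rationality the paper invokes Max Noether or Tsen, while you exhibit an explicit constant section of $E'\to L$ through a point of $A\cap\{z_0=0\}$; this is a nice touch, as it directly produces the rational point on the generic fibre asserted later in Proposition \ref{lemodele} (iib), and for it you do not even need the non-tangency condition of l'hypoth\`ese \ref{bonhyperplan} --- any point of $A\cap\{z_0=0\}$, nonempty over $\overline{\Q}$, gives such a section.
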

 \begin{proof}
 Soit $Q'\in V'$ un point singulier au-dessus d'un point $Q=(0:0:0:z_3:z_4)\in L$. On a donc soit $z_3\neq 0$, soit $z_4\neq 0$.
 \begin{enumerate}
  \item Si $z_4\neq 0$, on a une \'equation de $V$ dans les coordonn\'ees affines $x_i=\frac{z_i}{z_4}$, $0\leq i\leq 3$
  $$\alpha(x_0,x_1,x_2)x_3^2+\beta(x_0,x_1,x_2)x_3+\gamma(x_0,x_1,x_2)+x_0^2=0.$$
  Comme les r\^oles de $x_1$ et $x_2$ sont sym\'etriques, on n'a que deux cartes de l'\'eclatement \`a consid\'erer :
  \begin{enumerate}
  \item $x_0=y_0$, $x_1=y_1y_0, x_2=y_2y_0$, $x_3=y_3$. L'\'equation de $V'$ s'\'ecrit
  $$\alpha(1,y_1, y_2)y_3^2+\beta(1, y_1, y_2)y_0y_3+\gamma(1, y_1, y_2)y_0^2+1=0.$$
  Pour tout point de $V'$ au-dessus de $L$ on a $y_0=0$ et on obtient l'\'equation de $E'$
   $$\alpha(1,y_1, y_2)y_3^2+1=0.$$
  Les fibres au-dessus des points de $L$ sont donc des coniques et la fibre au-dessus du point g\'en\'erique 
  $\Spec \C(y_3)$ 
  de $L$ est lisse.

   Si $Q'=(0,y_1,y_2,y_3)$ est un point singulier de $V'$ au-dessus d'un point de $L$, on a  donc $\alpha(Q')y_3^2+1=0$ de l'\'equation ci-dessus et $\alpha(Q')\cdot 2 y_3=0,$ en d\'erivant par rapport \`a $z_3$, ce qui n'est pas possible.

    \item $x_0=y_0y_1$, $x_1=y_1$, $x_2=y_1y_2$, $x_3=y_3$. L'\'equation de $V'$ s'\'ecrit
    \begin{equation}\label{cartesinguliere}
  \alpha(y_0, 1, y_2)y_3^2+\beta(y_0, 1, y_2)y_1y_3+\gamma(y_0, 1, y_2)y_1^2+y_0^2=0.
  \end{equation}
   Pour tout point de $V'$ au-dessus de $L$ on a 
   $y_1=0$ et l'\'equation de $E'$ s'\'ecrit
   $$\alpha(y_0, 1, y_2)y_3^2+y_0^2=0.$$
  Les fibres au-dessus des points de $L$ sont donc des coniques. Pour un point singulier de la fibre au-dessus du point g\'en\'erique $\Spec\C(y_3)$ de $L$, on v\'erifie d'abord que $y_0=0$, puis que $(0:1:y_2)$ donne un point singulier de $A$ (d\'efini sur  le corps $\C(y_3)$), ce qui n'est pas possible. La fibre g\'en\'erique  est donc une conique lisse.

  Soit $Q'=(y_0,0,y_2,y_3)$ un point singulier de $V'$.
  \begin{enumerate} \item Si $y_3\neq 0$, on a $\alpha(Q')\cdot 2y_3=0$, d'o\`u $\alpha(Q')=0$. On d\'eduit de l'\'equation de $V'$ que $y_0=0$ et puis que $\frac{\partial\alpha(y_0, 1, y_2)}{\partial y_i}= \frac{\partial\alpha}{\partial z_i}(y_0, 1, y_2)=0$ pour $i=0, 2$. Comme 
  $\alpha(0, 1, y_2)=0$, on en d\'eduit que le point $(0:1:y_2)$ est un point singulier de $A$, contradiction.
  \item Si $y_3=0$, alors $y_0=0$ de l'\'equation de $V'$.  On v\'erifie qu'effectivement tous les points de la droite $y_0=y_1=y_3=0$ sont des points singuliers.
  \end{enumerate}
     \end{enumerate}
  \item Supposons   $z_3\neq 0$. On a donc l'\'equation de $V$ en  coordonn\'ees affines :
  $$\alpha(x_0,x_1,x_2)+\beta(x_0,x_1,x_2)+\gamma(x_0,x_1,x_2)+x_0^2x_4^2=0.$$
  Il suffit d'analyser deux cartes de l'\'eclatement:
  \begin{enumerate}
  \item $x_0=y_0$, $x_1=y_1y_0, x_2=y_2y_0$, $x_4=y_4$. L'\'equation de $V'$ s'\'ecrit
  $$\alpha(1,y_1, y_2)+\beta(1, y_1, y_2)y_0+\gamma(1, y_1, y_2)y_0^2+y_4^2=0.$$
   Pour tout point de $V'$ au-dessus de $L$, on a $y_0=0$, et l'\'equation de $E'$ s'\'ecrit
   $$\alpha(1, y_1, y_2)+y_4^2=0.$$
   Le fibres au-dessus des points de $L$ sont donc des coniques et la fibre au-dessus du point g\'en\'erique $\Spec \C(y_4)$ de $L$ est lisse.

    Si $Q'=(0,y_1,y_2,y_4)$ est un point singulier de $V'$ au-dessus d'un point de $L$, on a donc $2y_4=0$, d'o\`u $y_4=0$ et puis $\alpha(Q')=0$. Comme dans le cas $1(b)(i)$, on montre que $Q'$ est un point singulier de $A$, contradiction.
   \item $x_0=y_0y_1$, $x_1=y_1, x_2=y_2y_1$, $x_4=y_4$. L'\'equation de $V'$ s'\'ecrit
   $$\alpha(y_0, 1, y_2)+\beta(y_0, 1, y_2)y_1+\gamma(y_0, 1, y_2)y_1^2+y_0^2y_4^2=0$$ et au-dessus de $L$ on a $y_1=0$ et on obtient l'\'equation
   $$\alpha(y_0, 1, y_2)+y_0^2y_4^2=0.$$
   Ici encore pour un point singulier au-dessus du point g\'en\'erique
    $\Spec \C(y_4)$ de $L$
     on voit d'abord que $y_0=0$, puis que le point $(0:1:y_2)$ correspond \`a un point singulier de $A$ (d\'efini sur le corps $\C(y_4)$), contradiction.

    Si $Q'=(y_0, 0,y_2,y_4)$ est un point singulier de $V'$, on a $2y_4y_0^2=0$. On a donc $\alpha(Q)=0$ et $\frac{\partial\alpha(y_0, 1, y_2)}{\partial y_i}=0, i=0,2$, on obtient donc un point singulier de $A$, contradiction.

  \end{enumerate}
 \end{enumerate}
   Comme la surface $E'$ est fibr\'ee en coniques au-dessus d'une droite, le th\'eor\`eme de  Max Noether, ou de Tsen, montre que  c'est une
  surface rationnelle.

 \end{proof}

  Soit $L'\subset V'$ la droite image r\'eciproque du point $P$. Soit $V''\to V'$ l'\'eclatement de la droite $L'$.

 \begin{lem}
 Les seules singularit\'es de $V''$ sont des 
 singularit\'es quadratiques 
  ordinaires. La vari\'et\'e $V''$ est lisse en tout point au-dessus de $L$.  Le diviseur exceptionnel $E''$ de $V''\to V'$ est une surface rationnelle lisse et  les fibres de l'application $E''\to L'$ sont des coniques.
 \end{lem}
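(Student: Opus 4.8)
Le plan est de travailler dans les cartes affines au voisinage de $L'$ et d'analyser l'\'eclatement $V'' \to V'$. D'apr\`es le lemme pr\'ec\'edent et sa d\'emonstration, $V'$ est lisse au-dessus de $L \setminus P$, et son lieu singulier le long de la fibre exceptionnelle est exactement la droite $L'$, donn\'ee dans la carte (\ref{cartesinguliere}) par $y_0 = y_1 = y_3 = 0$ ; les neuf points doubles ordinaires de $V$ situ\'es hors de $L$ subsistent comme points singuliers de $V'$. Comme les \'eclatements de $L$ puis de $L'$ sont des isomorphismes au-dessus du compl\'ementaire de $L$, ces neuf points subsistent \`a l'identique sur $V''$ ; il suffit donc d'\'etudier $V''$ au-dessus de $P$ pour \'etablir que ce sont l\`a les seules singularit\'es, que $V''$ est lisse au-dessus de $L$, et pour d\'ecrire le diviseur exceptionnel $E''$.

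Je calcule d'abord le c\^one tangent de $V'$ le long de $L'$. En regroupant dans (\ref{cartesinguliere}) les termes de plus bas degr\'e en $(y_0, y_1, y_3)$, on trouve la forme quadratique
\[
Q = y_0^2 + \alpha(0,1,y_2)\, y_3^2 + \beta(0,1,y_2)\, y_1 y_3 + \gamma(0,1,y_2)\, y_1^2,
\]
dont les coefficients d\'ependent du param\`etre $y_2$ parcourant $L'$. Le diviseur exceptionnel $E''$ de $V'' \to V'$ au-dessus d'un point de $L'$ est la conique $\{Q = 0\} \subset \mathbb{P}^2_{[Y_0:Y_1:Y_3]}$, de sorte que $E'' \to L'$ est un fibr\'e en coniques. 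Le discriminant de $Q$ vaut $\alpha\gamma - \tfrac14\beta^2 = -\tfrac14(\beta^2 - 4\alpha\gamma) = -\tfrac14\,\epsilon_1\epsilon_2$, \'evalu\'e en $(0,1,y_2)$, d'apr\`es la relation $\beta^2 - 4\alpha\gamma = \epsilon_1\epsilon_2$. La conique $Q$ est donc lisse hors de l'ensemble fini des $y_2$ pour lesquels le point $(0:1:y_2)$ appartient \`a $E_1 \cup E_2$.

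Il reste \`a \'etablir la lissit\'e de $V''$ au-dessus de $L$. On \'eclate le centre $\{y_0 = y_1 = y_3 = 0\}$ dans les trois cartes standard, le transform\'e strict s'obtenant en divisant l'\'equation par le carr\'e du param\`etre exceptionnel. Dans la carte $Y_0 \neq 0$, l'\'equation du transform\'e strict contient un terme constant $1$ issu de $y_0^2$, qui interdit tout point singulier au-dessus de $L'$. Dans les cartes $Y_3 \neq 0$ et $Y_1 \neq 0$, un calcul des d\'eriv\'ees partielles montre que les seuls points singuliers candidats sont situ\'es au-dessus des z\'eros du discriminant, au sommet de la conique d\'eg\'en\'er\'ee (par exemple $p_0 = 0$ et $2\gamma p_1 + \beta = 0$ dans la carte $y_3 = w$, $y_0 = wp_0$, $y_1 = wp_1$, o\`u le transform\'e strict a pour \'equation $F'' = \alpha(wp_0,1,y_2) + \beta(wp_0,1,y_2)p_1 + \gamma(wp_0,1,y_2)p_1^2 + p_0^2$). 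En un tel point, en d\'erivant par rapport \`a $y_2$ l'identit\'e $4\gamma(\gamma p_1^2 + \beta p_1 + \alpha) = (2\gamma p_1 + \beta)^2 - \epsilon_1\epsilon_2$, obtenue en compl\'etant le carr\'e comme en (\ref{alphag}), on trouve
\[
4\gamma(0,1,y_2)\,\frac{\partial F''}{\partial y_2} = -\frac{\partial(\epsilon_1\epsilon_2)}{\partial z_2}(0,1,y_2).
\]
Le membre de droite est non nul d\`es que la droite $z_0 = 0$ rencontre transversalement $E_1 \cup E_2$, condition g\'en\'erique que l'on peut imposer en m\^eme temps que (\ref{bonhyperplan}), les droites tangentes \`a $E_1$ ou $E_2$, ou passant par $E_1 \cap E_2$, formant un ferm\'e propre du plan dual. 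Le cas de rang $1$, o\`u $\alpha$, $\beta$ et $\gamma$ s'annulent simultan\'ement en $(0,1,y_2)$, forcerait $(0:1:y_2) \in A \cap (E_1 \cup E_2)$ et est exclu par (\ref{bonhyperplan}) ; ainsi l'une au moins des formes $\alpha, \gamma$ ne s'annule pas en $(0,1,y_2)$, ce qui place le sommet de la conique dans la carte correspondante (carte $Y_3 \neq 0$ si $\gamma \neq 0$, carte $Y_1 \neq 0$ si $\alpha \neq 0$) et rend $\partial F'' / \partial y_2$ non nul. On conclut que $V''$ est lisse au-dessus de $L$ ; le m\^eme calcul, appliqu\'e \`a l'\'equation de $E''$ dans ces cartes, montre que $E''$ est lisse.

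Enfin, $E''$ est un fibr\'e en coniques lisse au-dessus de $L' \simeq \mathbb{P}^1$, de fibre g\'en\'erique une conique lisse ; c'est donc une surface rationnelle, par le th\'eor\`eme de Noether, ou de Tsen, comme au lemme pr\'ec\'edent. Jointe \`a la persistance des neuf points doubles ordinaires de $V$, qui sont alors les seules singularit\'es de $V''$, cette analyse \'etablit l'\'enonc\'e. Le point d\'elicat est l'\'etude en cartes de l'\'eclatement au-dessus des z\'eros du discriminant : c'est l'identit\'e $\beta^2 - 4\alpha\gamma = \epsilon_1\epsilon_2$, jointe \`a (\ref{bonhyperplan}) et \`a la transversalit\'e de $z_0 = 0$ avec $E_1 \cup E_2$, qui ram\`ene la d\'eg\'en\'erescence du c\^one tangent \`a un simple passage au rang $2$, sans singularit\'e du transform\'e strict.
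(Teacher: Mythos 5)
Votre d\'emonstration est correcte dans sa substance et suit la m\^eme strat\'egie que celle de l'article : analyse des trois cartes de l'\'eclatement de $L'$ au-dessus de la carte (\ref{cartesinguliere}), exclusion de la carte $Y_0\neq 0$ par le terme constant $1$ joint \`a la relation d'Euler, puis, dans les deux autres cartes, v\'erification qu'un point singulier candidat --- n\'ecessairement $p_0=0$ et $2\gamma p_1+\beta=0$, donc au sommet d'une fibre conique d\'eg\'en\'er\'ee --- est exclu par la non-nullit\'e de la d\'eriv\'ee en $y_2$. Les deux textes ne diff\`erent que par le m\'ecanisme d'exclusion : l'article rel\`eve le point candidat en un point de $M\setminus\{P_0\}$ ou de $A\cap(E_1\cup E_2)$ situ\'e sur l'hyperplan $z_0=0$, ce qui contredit directement (\ref{bonhyperplan}), tandis que vous passez par le discriminant $-\frac{1}{4}\epsilon_1\epsilon_2$ de la conique et par la transversalit\'e de la droite $z_0=0$ avec $E_1\cup E_2$, que vous imposez comme condition g\'en\'erique \emph{suppl\'ementaire}.

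C'est l\`a le point \`a corriger : le lemme est \'enonc\'e sous la seule hypoth\`ese (\ref{bonhyperplan}), et votre r\'edaction ne l'\'etablit que sous une hypoth\`ese renforc\'ee. Or cette hypoth\`ese suppl\'ementaire est redondante, et la r\'eduction tient en quelques lignes : si la droite $z_0=0$ rencontre $E_1\cup E_2$ non transversalement en $q=(0:1:y_2)$, c'est-\`a-dire si $\epsilon_1\epsilon_2(q)=0$ et $\frac{\partial(\epsilon_1\epsilon_2)}{\partial z_2}(q)=0$, alors $\alpha(q)\neq 0$ (sinon $q\in A\cap(E_1\cup E_2)$, exclu par (\ref{bonhyperplan})), et le point $Q=(0:1:y_2:z_3)$ avec $z_3=-\beta(q)/(2\alpha(q))$ v\'erifie $g(Q)=\frac{\partial g}{\partial z_3}(Q)=\frac{\partial g}{\partial z_2}(Q)=0$, gr\^ace \`a l'identit\'e (\ref{alphag}) et \`a sa d\'eriv\'ee en $z_2$ ; c'est donc un point de $M\setminus\{P_0\}$ sur l'hyperplan $z_0=0$, ce qu'exclut pr\'ecis\'ement (\ref{bonhyperplan}). (C'est le calcul de la preuve du lemme \ref{surfacequartique} (ii), lu \`a l'envers ; le point $(0:0:1)$ se traite de m\^eme avec la d\'eriv\'ee en $z_1$.) En ins\'erant cette r\'eduction, votre preuve donne bien le lemme tel qu'\'enonc\'e, sans modifier la condition (\ref{bonhyperplan}). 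Un point mineur pour finir : vous ne travaillez que dans la carte singuli\`ere (\ref{cartesinguliere}) ; le point de $L'$ correspondant \`a la direction $(0:0:1)$ n'y appara\^{\i}t pas et demande la seconde carte singuli\`ere, obtenue en \'echangeant $z_1$ et $z_2$, o\`u l'analyse est identique --- l'article signale explicitement l'existence de cette seconde carte.
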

 \begin{proof}

 On reprend l'\'equation de la carte singuli\`ere (\ref{cartesinguliere}) de $V'$ :
  $$ \alpha(y_0, 1, y_2)y_3^2+\beta(y_0, 1, y_2)y_1y_3+\gamma(y_0, 1, y_2)y_1^2+y_0^2=0.$$

  Notons qu'on a une deuxi\`eme carte singuli\`ere, en \'echangeant les r\^oles de $x_1$ et $x_2$ dans le lemme pr\'ec\'edent.

  On a trois cartes affines dans l'\'eclatement $V''$ de la droite $L': y_0=y_1=y_3=0$ au-dessus de la carte (\ref{cartesinguliere}).
  \begin{enumerate}
   \item $y_0=u_0, y_1=u_0u_1, y_2=u_2, y_3=u_0u_3$. L'\'equation de $V''$ s'\'ecrit
   $$\alpha(u_0,1,u_2)u_3^2+\beta(u_0,1,u_2)u_1u_3+\gamma(u_0,1,u_2)u_1^2+1=0.$$
 Pour tout point au-dessus de $L'$ on a $u_0=0$ et obtient l'\'equation de $E''$
  $$\alpha(0,1,u_2)u_3^2+\beta(0,1,u_2)u_1u_3+\gamma(0,1,u_2)u_1^2+1=0.$$

  Pour un point singulier  de $E''$, on a $\alpha(0,1,u_2)\cdot 2u_3+\beta(0,1,u_2)\cdot u_1=0$ et  $\beta(0,1,u_2)\cdot u_3+\gamma(0,1,u_2)\cdot 2u_1=0$, d'o\`u    
  $$\alpha(0,1,u_2)u_3^2+\beta(0,1,u_2)u_1u_3+\gamma(0,1,u_2)u_1^2=0\neq -1.$$
De m\^eme, pour un point singulier $Q''$ de $V''$ on montre que $$\alpha(Q'')u_3^2+\beta(Q'')u_1u_3+\gamma(Q'')u_1^2=0\neq -1.$$
\item $y_0=u_0u_1, y_1=u_1, y_2=u_2, y_3=u_1u_3$.  L'\'equation de $V''$ s'\'ecrit
   $$\alpha(u_0u_1,1,u_2)u_3^2+\beta(u_0u_1,1,u_2)u_3+\gamma(u_0u_1,1,u_2)+u_0^2=0.$$
   Pour tout point au-dessus de $L'$ on a $u_1=0$ et on obtient l'\'equation de $E''$
   $$\alpha(0,1,u_2)u_3^2+\beta(0,1,u_2)u_3+\gamma(0,1,u_2)+u_0^2=0.$$
   Pour un point singulier de $E''$ on a $2u_0=0$, donc $$\alpha(0,1,u_2)u_3^2+\beta(0,1,u_2)u_3+\gamma(0,1,u_2)=0.$$  Puis on obtient  $2\alpha(0,1,u_2)u_3+\beta(0,1,u_2)=0$ en d\'erivant par rapport \`a $u_3$. Comme $\frac{\partial\alpha(0, 1, u_2)}{\partial u_2}=\frac{\partial\alpha}{\partial z_3}(0, 1, u_2)$ et de m\^eme pour $\beta$ et $\gamma$, en d\'erivant par rapport \`a $z_2$, on d\'eduit que 
     $(0:1:u_2:u_3)\in  M\setminus\{P_0\}$, ce qui n'est pas possible d'apr\`es l'hypoth\`ese~\ref{bonhyperplan}.

   Pour un point singulier $Q''$ de $V''$ on a une analyse similaire :  $\frac{\partial\alpha(u_0u_1, 1, u_2)}{\partial u_0}=0$ et de m\^eme pour $\beta$ et $\gamma$, d'o\`u $u_0=0$ en prenant la d\'eriv\'ee  par rapport \`a $u_0$. En prenant la d\'eriv\'ee par rapport \`a $u_2$  on en d\'eduit que la projection $(0:1:u_2:u_3)$ de $Q''$ est dans $ M\setminus\{P_0\}$, contradiction avec le choix de l'hyperplan $z_0=0$.
\item $y_0=u_0u_3, y_1=u_1u_3, y_2=u_2, y_3=u_3$.  L'\'equation de $V''$ s'\'ecrit
   $$\alpha(u_0u_3,1,u_2)+\beta(u_0u_3,1,u_2)u_1+\gamma(u_0u_3,1,u_2)u_1^2+u_0^2=0.$$
    Ici, pour  tout point au-dessus de $L'$ on a $u_3=0$ et on obtient l'\'equation de $E''$
   $$\alpha(0,1,u_2)+\beta(0,1,u_2)u_1+\gamma(0,1,u_2)u_1^2+u_0^2=0.$$
    Pour un point singulier de $E''$ on a $2u_0=0$, d'o\`u  $$\alpha(0,1,u_2)+\beta(0,1,u_2)u_1+\gamma(0,1,u_2)u_1^2=0.$$ Si $u_1\neq 0$, on montre de la m\^eme fa\c{c}on que dans la carte pr\'ec\'edente que 
    $(0:1:u_2:\frac{1}{u_1})\in M\setminus P_0$ et on obtient une contradiction. Si $u_1=0$, on a $\alpha(0,1,u_2)=0$ de l'\'equation, et puis $\beta(0,1,u_2)=-2u_1\gamma(0,1,u_2)=0$. On a donc  
    $(0:1:u_2)\in A\cap(E_1\cup E_2)$, contradiction avec l'hypoth\`ese \ref{bonhyperplan}.

     Pour $Q''$ un point singulier de $V''$ on a  $\frac{\partial\alpha(u_0u_3, 1, u_2)}{\partial u_0}=0$ et de m\^eme pour $\beta$ et $\gamma$, d'o\`u $u_0=0$.  On d\'eduit de m\^eme que soit $u_1\neq 0$ et $(0:1:u_2:u_1^{-1})\in M\setminus\{P_0\}$,  soit $u_1=0$ et 
     $(0:1:u_2)\in A\cap(E_1\cup E_2)$, ce qui n'est pas possible d'apr\`es le choix de l'hyperplan $z_0=0$.
  \end{enumerate}
  Comme la surface $E''$ est fibr\'ee en coniques au-dessus d'une droite, le th\'eor\`eme de Max Noether, ou de Tsen, montre que c'est une
  surface rationnelle.
  
\end{proof}

   Soit $W\to V''$ l'\'eclatement des 
   singularit\'es quadratiques 
  ordinaires de $V''$.

 \begin{prop}\label{lemodele}

 (i) La $\overline{\Q}$-vari\'et\'e $W$ est projective et lisse.

 (ii) Le diviseur  exceptionnel $E$ de la r\'esolution $\pi: W\to V$ s'\'ecrit comme l'union disjointe $E=\sqcup_{i=1}^{10} E_i$, o\`u :

  (iia) les  composantes $E_1,\ldots E_9$ sont des quadriques lisses au-dessus des points 
  singuliers  quadratiques 
   ordinaires;

  (iib) la composante $E_{10}$ est l'union de deux surfaces   $E'\cup E''$,   le morphisme $\pi$ induit  une fibration $E'\to L$ dont les fibres sont des coniques et la fibre g\'en\'erique est lisse et poss\`ede un point rationnel, la surface $E''$ est rationnelle lisse et   $\pi(E'')=P$.

(iii) Le morphisme $W\to V$ est un $CH_0$-isomorphisme universel.

 (iv) Le groupe $H^4(W_{\C},\Z)_{tors} \simeq H^3(W_{\C},\Z)_{tors}  \simeq \Br\,W_{\C} $ est non nul.
  \end{prop}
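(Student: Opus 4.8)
Le plan est de lire les assertions (i) et (ii) sur les lemmes qui précèdent, d'établir (iii) en factorisant la résolution $\pi$ comme la suite des trois éclatements $W \to V'' \to V' \to V$ et en appliquant le critère de la proposition \ref{chowisolissesing} à chacun d'eux, puis de déduire (iv) d'identifications cohomologiques classiques jointes à l'invariance birationnelle du groupe de Brauer et au résultat d'Artin--Mumford. Pour (i), je noterais d'abord que $W$ est projective, comme obtenue par éclatements de centres fermés dans la variété projective $V \subset \P^4$, puis que $W \to V''$ résout les dernières singularités : d'après les lemmes qui précèdent, $V''$ n'a que des points doubles ordinaires, et l'éclatement d'un tel point sur un solide fournit une variété lisse dont la fibre exceptionnelle est une quadrique lisse de dimension $2$. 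La variété $W$ est donc lisse.

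Pour (ii), je rassemblerais les descriptions déjà obtenues. Les neuf singularités de $V$ hors de $L$ ne sont pas touchées par les deux premiers éclatements, de sorte que $W \to V''$ les résout en neuf quadriques lisses $E_1, \dots, E_9$, d'où (iia). Comme les deux premiers éclatements ont lieu au-dessus de $L$ et que $W \to V''$ est un isomorphisme au-dessus de ce lieu, la composante exceptionnelle $E_{10}$ au-dessus de $L$ est l'union $E' \cup E''$ décrite plus haut : $E' \to L$ est une fibration en coniques à fibre générique lisse et $E''$ est une surface rationnelle lisse contractée sur le point $P$, d'où (iib). Le seul point non immédiat est l'existence d'un point rationnel sur la fibre générique de $E' \to L$ ; elle résulterait du théorème de Tsen, le corps $\overline{\Q}(L) \simeq \overline{\Q}(t)$ étant $C_1$, toute conique lisse sur un tel corps y possédant un point.

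Pour (iii), j'utiliserais qu'une composée de morphismes universellement $CH_0$-triviaux l'est encore, et je traiterais chacun des trois éclatements par la proposition \ref{chowisolissesing}. Les fibres au-dessus d'un point hors du centre sont des points réduits. Au-dessus des centres, les fibres sont les neuf quadriques $\P^1 \times \P^1$, universellement $CH_0$-triviales, et les coniques des fibrations $E' \to L$ et $E'' \to L'$. Au-dessus d'un point fermé de $L$ ou de $L'$, de corps résiduel $\overline{\Q}$, une telle conique, même dégénérée, possède un point rationnel et est universellement $CH_0$-triviale (pour une conique réductible, par l'exemple du \S\ref{algebrique}) ; au-dessus du point générique, de corps résiduel $C_1$ par Tsen, la conique lisse est isomorphe à $\P^1$. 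Le critère de la proposition \ref{chowisolissesing} s'applique alors à chaque éclatement, et $\pi$ est universellement $CH_0$-trivial.

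Pour (iv), la désingularisation lisse $W$ d'une hypersurface quartique de $\P^4_{\C}$ est rationnellement connexe ; on a donc $H^i(W_{\C}, \sO_{W_{\C}}) = 0$ pour $i > 0$, d'où les identifications $H^4(W_{\C}, \Z)_{tors} \simeq H^3(W_{\C}, \Z)_{tors} \simeq \Br\,W_{\C}$ (dualité de Poincaré pour la première, identification de Grothendieck \cite{grbr} pour la seconde). Comme $\Br$ est un invariant birationnel des variétés projectives lisses sur $\C$ et que $W$ est birationnelle à un solide d'Artin--Mumford, dont une désingularisation a un groupe de Brauer non nul \cite{artinmumford} (voir aussi le calcul birationnel \cite[Exemple 2.5]{CTOj}), on conclurait $\Br\,W_{\C} \neq 0$. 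L'obstacle principal se situe en (iii) et (iv) : non dans les vérifications de lissité, qui se lisent sur les lemmes précédents, mais dans le contrôle de la $CH_0$-trivialité universelle de toutes les fibres de la résolution — notamment des fibres génériques des fibrations en coniques, où le théorème de Tsen est décisif — et dans la non-nullité du groupe de Brauer, qui repose in fine sur l'exemple d'Artin--Mumford.
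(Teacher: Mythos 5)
Votre démonstration est correcte et suit pour l'essentiel la même route que l'article : (i) et (ii) se lisent sur les lemmes précédents, (iii) repose sur la proposition \ref{chowisolissesing} avec la même analyse des fibres (quadriques lisses déployées, coniques avec point rationnel — Tsen pour la fibre générique —, coniques réductibles, surface rationnelle lisse $E''$), et (iv) résulte de l'invariance birationnelle du groupe de Brauer et du résultat d'Artin--Mumford. La seule différence, purement organisationnelle, est que l'article applique la proposition \ref{chowisolissesing} une seule fois au morphisme composé $\pi : W \to V$, dont les fibres sont exactement les variétés de votre liste, au lieu de traiter séparément chacun des trois éclatements puis de composer.
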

 \begin{proof}
 Les propri\'et\'es (i) et (ii)  r\'esultent de la construction et des lemmes pr\'ec\'edents.
D'apr\`es  la proposition \ref{chowisolissesing}, pour \'etablir (iii) il suffit de v\'erifier que sur tout corps $F$  les fibres de $W_{F}\to V_{F}$ au-dessus des $F$-points  sont
universellement
 $CH_{0}$-triviales, ce qui r\'esulte de $(ii)$ car ces fibres sont soit la $F$-surface rationnelle projective et lisse $E''_{F}$, soit une $F$-conique lisse avec un point rationnel, 
 soit
une conique r\'eductible sur $F$, soit une $F$-surface quadrique lisse d\'eploy\'ee.
  Puisque $W$ est birationnelle \`a la vari\'et\'e d'Artin-Mumford, la propri\'et\'e (iv) r\'esulte de \cite{artinmumford}.
 \end{proof}

\section{Le lieu de la d\'ecomposition de la diagonale}

Cet appendice donne des d\'etails sur  la preuve du th\'eor\`eme suivant (\cite[Theorem 1.1 et Proposition 1.4]{voisin}).

\thmd\label{Den}{{\it Soit $k$ un corps alg\'ebriquement clos de caract\'eristique z\'ero. Soient $B$ un $k$-sch\'ema lisse et $X\to B$ un  morphisme projectif qui admet une section $\sigma:B\to X$.  Alors  il existe une famille d\'enombrable $\{B_i\}_{i \in \N}$,  
de
sous-sch\'emas ferm\'es de $B$, telle que  
$$\{b\in B(k) \,| \, X_b\mbox{ admet une d\'ecomposition de Chow de la diagonale}\}=\cup_{i \in \N} B_i(k).$$}}

Pour d\'emontrer ce  th\'eor\`eme, on utilise un ``argument  de sch\'emas de Hilbert''. Plus pr\'ecis\'ement, on consid\`ere des sch\'emas qui param\`etrent des sous-sch\'emas et des sch\'emas qui param\`etrent des cycles sur $X$, ainsi que les familles universelles correspondantes. Cela  
 utilise des r\'esultats profonds d'existence de sch\'emas  $Hilb$ et $Chow$.

 {\it Dans tout cet appendice,  $k$ est un corps alg\'ebriquement clos de caract\'eristique z\'ero.}\\

 Rappelons d'abord la d\'efinition des sch\'emas de Chow  (voir \cite[Def I.3.11; Thm. I.3.2.1]{kollar}). Soient $S$ un $k$-sch\'ema et  $X/S$ un sch\'ema sur $S$. Soit $W/S$ un sch\'ema r\'eduit. Une {\bf famille bien d\'efinie 
 de cycles alg\'ebriques propres} de $X/S$ sur $W/S$  est la donn\'ee de
 \begin{enumerate}
 \item  un cycle $U=\sum m_i [U_i]$, i.e. $U_i$ sont des sch\'emas int\`egres, avec $Supp\, U\subset X\times_S W$, $m_{i} \in \mathbb Z $.
 \item $g:Supp\,U\to W$ propre, tel que chaque fibre de $g_i:=g|_{U_i}$ est soit vide, soit de dimension $d$, et tel que l'image  de $g_i$ est une composante irr\'eductible de $W$.
 \end{enumerate}
 Dans le cas o\`u sch\'ema $W$ n'est pas normal, on impose une condition suppl\'ementaire dont on n'aura pas besoin ici.
 
 Si $X/S$ est un sch\'ema projectif, alors (\cite[Thm. I.3.2.1]{kollar})
 on dispose d'une famille de $S$-sch\'emas projectifs et semi-normaux $Chow_{X/S}^{d,d'}$ (param\'etr\'es par des donn\'ees d\'enombrables de degr\'es $(d,d')$) qui repr\'esente le foncteur des cycles {\bf non-n\'egatifs}, sur la cat\'egorie des $S$-sch\'emas semi-normaux. Dans la suite on va souvent omettre les indices $(d, d')$. On a aussi 
 une  famille universelle  $Univ_{X/S}\to Chow_{X/S}$.\\

 \subsection{Lemmes pr\'eliminaires}
 \lem\label{suppDiv}{ Soient $B$ un $k$-sch\'ema lisse et $X\to B$ un  morphisme projectif. Il existe une famille d\'enombrable $\mathcal F$  de  $B$-sch\'emas
 lisses $F_i$, $i\in \N$  \'equip\'es d'une famille 
de
cycles $V_i/B$ sur  $F_i/B$ de $Y=X\times_B X/B$, telle que, pour tout $b\in B(k)$ 
et tout  
cycle effectif  $Z$ dans $Y_b$ de dimension $d$ dont l'image par la premi\`ere projection est
incluse
 dans le support d'un diviseur de $X_b$, il existe un point $x\in F_i$ au-dessus de $b$ tel que $Z$ s'identifie \`a $V_{i,x}$.}
\proof{Soit $H$ une composante 
du sch\'ema
 de Hilbert de $X/B$ qui param\`etre les diviseurs  effectifs (plats et de codimension relative 1), $D'\to H$ la famille universelle  
 et $D := D' \times_{B} X$
  (les sous-sch\'emas de $Y$ qui sont   produits d'un diviseur sur $X$ et  de $X$) : comme c'est une famille projective, on dispose des sch\'emas de Chow de $D/H$.  Soient $C$ une  composante  du sch\'ema de Chow $Chow_{D/H}$ des cycles de dimension $d$ et $V$  la famille universelle,  on a  $Supp V\subset C\times_H D$ : la fibre de $V$ au-dessus de  $c\in C$ d'image $h\in H$ est  un cycle inclus dans $D_h$. La famille $V$ peut \^etre aussi vue comme une famille 
 de
  cycles $V/B$
   param\'etr\'ee
   par $C/B$ dans $Y/B$ : en effet, $Supp V\subset C\times_H D\subset C\times _H (Y\times_B H)= C\times _B Y.$  
     Soient $\tilde{C}$
   une  d\'esingularisation (Hironaka) de $C$  et     $\tilde{V}$ le tir\'e-en-arri\`ere de la famille $V$ (cela ne change pas les fibres). On trouve une famille   d\'enombrable  $\mathcal F =  \{\tilde{C}\}$    (o\`u l'union est sur toutes les composantes $H$ et sur toutes les composantes $C$ de  $Chow_{D/H}$.)\\
\qed}

\lem{Soient $B$ un $k$-sch\'ema lisse  et $Y\to B$ un  morphisme projectif. Il existe une famille d\'enombrable de sch\'emas normaux $T_i, i\in \N$ munis d'une famille de cycles $\mathcal D_i\to T_i$ telle que, pour tout $b\in B$ 
et pour toute sous-vari\'et\'e int\`egre $W$ de dimension $d+1$ dans $Y_b$, il existe une d\'esingularisation $\tilde W$ de $W$ telle que
pour tout diviseur de fonction $D=D_1-D_2$ sur $\tilde W$, avec $D_1, D_2$ effectifs, il existe $i$ et un point $t\in T_i(k)$ au-dessus de $b$ tel que la fibre $\mathcal D_{i, t}$ s'identifie \`a $(D_1, D_2)$.}
\proof{Soit $G$ un sch\'ema quasi-projectif qui est une composante irr\'eductible, munie de sa structure r\'eduite,
 de l'union des sch\'emas (cf.  \cite[9.7.7]{EGAIV}) qui param\`etrent les sous-vari\'et\'es int\`egres de $Y/B$ (plats sur $B$) de dimension $d+1$ et soit $W\to G$ une famille universelle : c'est un morphisme projectif, plat, \`a fibres g\'eom\'etriques int\`egres.  
La fibre g\'en\'erique $W_{k(G)}$ est int\`egre. 
Soit $\tilde W_{k(G)}$ une r\'esolution des singularit\'es de $W_{k(G)}$. On peut supposer que le morphisme $\tilde W_{k(G)}\to W_{k(G)}$ s'\'etend sur un ouvert $G_1$ de $G$. Quitte \`a changer $G_1$ par un ouvert plus petit, on peut supposer que pour tout $t\in G_1(k)$ la fibre $\tilde W_t$ est lisse et birationnelle \`a $W_t$, puisque ces propri\'et\'es sont v\'erifi\'ees 
au point g\'en\'erique. On pose $W_1=W|_{G_1}$ et $\tilde W_1=\tilde W_{G_1}$. Par r\'ecurrence (sur $dim\, G$), on peut donc trouver une d\'ecomposition $G=\cup_{j=1}^m G_j$ avec $G_j$ des sch\'emas quasi-projectifs,  localement ferm\'es dans $G$, des familles $\tilde W_j\to G_j$ \`a fibres  projectives lisses, telles que pout tout $t\in G(k)$, la fibre $\tilde W_{j, t}$ est une r\'esolution de $W_t$. Quitte \`a changer $G_j$ par une r\'esolution et $\tilde W_{j}$ par le tir\'e-en-arri\`ere de la famille $W_j$, on peut m\^eme supposer que les  $G_j$ sont lisses.

D'apr\`es \cite[FGA  revisited, Chap. 9,  3.7 et 4.8]{kleiman}), sous les hypoth\`eses que 
$\tilde W_j/G_j$ est projectif, plat, \`a fibres g\'eom\'etriques int\`egres, on dispose alors
 de sch\'emas $\Div_{\tilde W_j/G_j}$ (qui param\`etre les diviseurs de Cartier effecifs) et $\Pic_{\tilde W_j/G_j}$, munis de familles universelles.  On a en plus un morphisme $Ab: \Div_{X/S}\to \Pic_{X/S}.$
Soit $\Delta_j$,  vu comme $B$-sch\'ema,    l'image r\'eciproque par le morphisme $(Ab, Ab): \Div_{\tilde W_j/G_j}\times \Div_{\tilde W_j/G_j} \to \Pic_{\tilde W_j/G_j}\times \Pic_{\tilde W_j/G_j}$ de la diagonale. On a alors que $\Delta_j$ est l'union (finie) de 
 ses composantes irr\'eductibles. Soit $T$ une de ces composantes (munie de sa structure r\'eduite) et soit $\tilde T$  la normalisation de $T$. Soit $\mathcal T$ la famille $\mathcal T=\{\tilde T\}$ o\`u l'on prend l'union sur toutes les composantes $G$, sur tout $j$ correspondant \`a la stratification de $G$ comme ci-dessus et sur toutes les composantes $\tilde T$ obtenues \`a partir de $\Delta_j$.
  Alors $T_i\in \mathcal T$ et $\mathcal D_i$  la famille universelle induite par celle sur les sch\'emas $\Div$ conviennent. Un
point
 $t\in T_i$ au-dessus de $b\in B$ correspond \`a
 un sous-sch\'ema 
 int\`egre de dimension $d+1$
 dans $Y_b$ et un diviseur d'une fonction sur une r\'esolution de ce sch\'ema.\qed\\}

\lem\label{famDiv}{Soient $B$ un $k$-sch\'ema lisse  et $Y\to B$ un  morphisme projectif. Il existe une  famille d\'enombrable de  sch\'emas normaux $H_i, i\in \N$ munis d'une famille de cycles $\mathcal S_i$ sur $H_i$ telle que, pour tout $b\in B$ et pour tout cycle $D=\sum_{i=1}^nD_{i1}-D_{i2}$ de dimension $d$ dans $Y_b$, avec $D_{i1}, D_{i2}$ effectifs, qui sont des diviseurs d'une fonction sur  une sous-vari\'et\'e int\`egre $W_{it}$ de dimension $d+1$ dans $Y_t$, il existe $i$ et un point $t\in H_i(k)$ au-dessus de $b$ tel que la fibre $\mathcal S_{i, t}$ s'identifie \`a $\cup (D_{i1} \cup D_{i2})$. }
\proof{ On prend la r\'eunion d\'enombrable sur tous les $n$-uplets $(T_1,\ldots, T_n)$ avec $T_i$ dans la famille du lemme pr\'ec\'edent. On pose $H_i$ le normalis\'e du produit $\prod_{j=1}^nT_j$ (avec $T_j$ comme dans le lemme pr\'ec\'edent) et  $S_i$ l'union des tir\'es-en-arri\`ere des $\mathcal D_j\to T_j$. \qed\\} 

Le lemme ci-dessus permet de montrer

\prop\label{fu}{ Soient $B$ un $k$-sch\'ema lisse  et $Y\to B$ un  morphisme projectif.  Soit $Z=(Z_1, Z_2)\in Chow_{Y/B}(B)\times Chow_{Y/B}(B)$ deux
familles bien d\'efinies  de 
cycles alg\'ebriques propres effectifs,
 de dimension $d$, au  sens de Koll\'ar.  Alors  il existe une famille d\'enombrable $\pi_i:M_i\to B$ ($i\in \N$) de sch\'emas quasi-projectifs au-dessus de $B$  munis de familles $U_i\to M_i$ projectives, 
telle que :
\begin{itemize}
\item[i)] L'union des images 
$B_i : = \pi_i(M_i(k))$ 
dans $B(k)$ est exactement le lieu
$$\{b\in B(k) \,| \, Z_{1, b}-Z_{2,b}\mbox{ est rationnellement \'equivalent \`a z\'ero dans } Y_b.\}$$
\item[ii)] Pour tout point $b\in B(k)$ et pour toute donn\'ee 
$S=(W_{i,b}, D_{i1}, D_{i2})_{i=1}^n$
avec $n \geq 1$ un entier, $W_{ib}$ des sous-sch\'emas int\`egres de dimension $d+1$
 de $Y_b$,
 $D_{i1}, D_{i2}$ deux diviseurs effectifs de Weil sur (la normalisation de) $W_{ib}$ tels que $D_{i1}-D_{i2}$ est 
 le diviseur d'une fonction rationnelle sur  $W_{ib}$,
  tel que l'on ait  l'\'egalit\'e de
cycles $$Z_{1b}+\sum D_{i1}= Z_{2b}+\sum D_{i2}\mbox{ dans } Chow_{Y/B}(\kappa(b))$$ il existe $l$ et un point $t\in M_i$ au-dessus de $b$ tel que la fibre $U_t$ s'identifie \`a la donn\'ee  $S$.\\ 
\end{itemize}
}
\proof{On prend $H_i$ comme dans le lemme \ref{famDiv}, avec la famille $S_i$ des cycles sur $H_i$ dans $Y/B$.  Soit $Z'=(Z'_1, Z'_2)\in Chow_{Y/B}(H_i)\times Chow_{Y/B}(H_i)$ la famille 'constante' $Z_{H_i}$. 
Par la construction de la famille  $\mathcal S_i$, elle param\`etre les couples $(C_1, C_2)$ de  cycles effectifs, dont la diff\'erence est rationnellement \'equivalente \`a z\'ero (est somme de  diviseurs de  fonctions).
Soit $M_i\subset H_i$ l'image r\'eciproque de la diagonale dans $Chow_{Y/B}\times Chow_{Y/B}$ par le morphisme 
\begin{gather*}H_i\to Chow_{Y/B}\times Chow_{Y/B}\\
(C_1, C_2)\mapsto (Z_1+C_1, Z_2+C_2).
\end{gather*}
Autrement dit, soit $\mathcal S_i\cup Z'$ une famille de cycles sur $H_i$ dans $Y/B$ et soit $H_i\to Chow_{Y/B}\times Chow_{Y/B}$ le morphisme induit. On d\'efinit $M_i$ comme le sous-sch\'ema ferm\'e de $H_i$, l'image r\'eciproque de la diagonale dans $Chow_{Y/B}\times Chow_{Y/B}$.
Alors $M_i$ avec la famille universelle induite par $\mathcal S_i$ convient.
\qed\\}

\subsection{Preuve du th\'eor\`eme} 

La proposition \ref{fu} permet de montrer l'\'enonc\'e suivant (cf. \cite[Prop 1.4]{voisin}) 

\prop\label{1.4}{Soient $B$ un $k$-sch\'ema lisse  et $Y\to B$ un  morphisme projectif. Soit $Z=(Z_1, Z_2)\in Chow_{Y/B}(B)\times Chow_{Y/B}(B)$ deux 
familles bien d\'efinies  de 
cycles alg\'ebriques propres, de dimension $d$, 
au sens
de Koll\'ar.  Alors  il existe une famille d\'enombrable $\{B_i\}_{i\in \N}$, de sous-sch\'emas ferm\'es de $B$, telle que  
$$\{b\in B(k) \,| \, Z_{1, b}-Z_{2,b}\mbox{ est rationnellement \'equivalent \`a z\'ero dans } Y_b\}=\cup B_i(k).$$}

\proof{Notons qu'ici  $Z_{i, b}$ 
est
soit vide, soit de dimension $d$ dans $Y_b$, d'apr\`es la d\'efinition d'un cycle bien d\'efini.\\
L'implication $\ref{fu}\Rightarrow \ref{1.4}$ suit la preuve dans \cite{voisin} (pp. 8-9), on ne la refait pas ici. 
Elle utilise la sp\'ecialisation de Fulton, comme dans les arguments du \S 1.\\
}

{\it D\'emonstration du th\'eor\`eme \ref{Den}}. 
On  d\'eduit le th\'eor\`eme \ref{Den} de la proposition \ref{1.4}. Soit $F_i$ une composante  et $V_i$ la famille universelle comme dans le lemme \ref{suppDiv}  pour $d=dim\,X$.  Soit $F_i^2=F_i\times _B F_i$.  Soient $V_i'=V_i\cup (\Delta_X)_{F_i}$  et $V_i''=V_i\cup (X\times \sigma(B))_{F_i}$ les translat\'es de cette famille. On applique la proposition \ref{1.4} \`a $Y=(X\times_B X)\times_B F_i^2$ et $Z_1=V'_i\times F_i$, $Z_2=F_i\times V''_i$ et  $B=F_i^2$. 
Si $t=(t_1, t_2)\in F_{i}^2$ au-dessus de $b$, la fibre de $Z_1$ en $t$ s'identifie au cycle effectif $\Delta_{X_b}+z_1$ o\`u $pr_{1*}Supp (z_1)$ est un sous-sch\'ema propre de $X_t$, de codimension au moins $1$ et la fibre de $Z_2$ en $t$ s'identifie au cycle effectif $X_b\times \sigma(b)+z_2$ o\`u $pr_{1*}Supp (z_2)$ est un sous-sch\'ema propre de $X_t$, de codimension au moins $1$.

On trouve une union d\'enombrable de ferm\'es $B_i$ comme dans la proposition \ref{1.4}, qui correspondent au lieu o\`u l'on a  l'\'egalit\'e $\Delta_{X_b}+z_1=X_b\times \sigma(b)+z_2$. Ensuite on prend encore l'union d\'enombrable sur tous les $i$ (correspondant
aux $F_i$).\qed

\section{Un calcul de groupe de Brauer}

\begin{prop}\label{brauernonramifienontrivial}
Soit $K/k$ une extension cubique galoisienne de corps,
de groupe de Galois $G$.
Supposons $k$ de caract\'eristique $p\neq  3$.
Soit $K=k(\theta)$. Soit $X \subset \P^5_{k}$ l'hypersurface cubique
donn\'ee par l'\'equation homog\`ene
$$\Norm_{K/k}(u+v\theta + w \theta^2)=xy(x-y).$$
Pour tout $k$-mod\`ele projectif et lisse $Y$ de $X$,  le groupe   $\Br(Y)/\Br(k)$
est la somme directe de $\Z/3$ et d'un groupe de torsion $p$-primaire.
En particulier $X$ n'est pas r\'etracte $k$-rationnelle.
\end{prop}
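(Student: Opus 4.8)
Le plan est d'utiliser l'invariance birationnelle du groupe de Brauer : pour tout modèle projectif lisse $Y$ on a $\Br(Y) \iso \Br_{\nr}(k(X)/k)$, et il suffit donc de calculer le groupe de Brauer non ramifié du corps $k(X)$, qui ne dépend de $X$ qu'à équivalence birationnelle près. Posons $\eta = u + v\theta + w\theta^2$, de sorte que la relation définissant $X$ s'écrive $\Norm_{K/k}(\eta) = xy(x-y)$, et soit $\xi = (K/k, x/y) \in \Br(k(X))$ la classe de l'algèbre cyclique, tuée par $3$. Comme une algèbre cyclique dont le second argument est une norme est déployée, on obtient dans $\Br(k(X))$
$$(K/k, x) + (K/k, y) + (K/k, x-y) = (K/k, xy(x-y)) = (K/k, \Norm_{K/k}(\eta)) = 0.$$

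Le point central est de montrer que $\xi$ est non ramifiée. Soit $\chi \in H^1(k,\Z/3)$ le caractère de $K/k$. Pour une valuation divisorielle $\nu$ de $k(X)$ triviale sur $k$, le caractère $\chi$ est non ramifié et la formule des résidus donne $\partial_\nu(\xi) = \nu(x/y)\,\chi|_{\kappa(\nu)}$. Si $K$ est déployée dans $\kappa(\nu)$, alors $\chi|_{\kappa(\nu)} = 0$ et $\partial_\nu(\xi) = 0$. Sinon $K\otimes_k \kappa(\nu)$ est un corps, $\nu$ se prolonge de façon unique et $G$-invariante en une valuation $\tilde\nu$ de $K(X)$, et l'on a $\nu(\Norm_{K/k}(\eta)) = \sum_{\sigma\in G} \tilde\nu(\sigma\eta) = 3\,\tilde\nu(\eta) \equiv 0 \pmod 3$. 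Joint à $\nu(\Norm_{K/k}(\eta)) = \nu(x)+\nu(y)+\nu(x-y)$ et au fait que $\nu(x-y) = \min(\nu(x),\nu(y))$ dès que $\nu(x) \neq \nu(y)$, un examen de cas immédiat force $\nu(x) \equiv \nu(y) \pmod 3$, donc $3 \mid \nu(x/y)$ et $\partial_\nu(\xi) = 0$. Ainsi $\xi \in \Br_{\nr}(k(X)/k) = \Br(Y)$.

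Il reste à voir que $\xi$ engendre un $\Z/3$ dans $\Br(Y)/\Br(k)$ et qu'il n'y a rien d'autre en dehors de $p$. Pour cela je passerais au point de vue géométrique. La variété $X$ admet, après résolution de ses trois points singuliers conjugués sous $G$ (comme dans la construction du Théorème \ref{padiquestable}), un modèle projectif lisse géométriquement rationnel $Z$, pour lequel $\Br(\bar Z)\{\ell\} = 0$ pour $\ell \neq p$. La suite spectrale de Hochschild--Serre fournit alors une injection de la partie première à $p$ de $\Br(Z)/\Br(k)$ dans $H^1(k,\Pic \bar Z)$, qui est un isomorphisme lorsque $\cd(k) \leq 1$. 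Toute l'action galoisienne sur $\Pic(\bar Z)$ se factorisant par $G = \Z/3$, le groupe $H^1(k,\Pic \bar Z)$ coïncide avec $H^1(G,\Pic \bar Z)$ ; en tenant compte de la classe hyperplane, des classes exceptionnelles permutées cycliquement et des relations imposées par la rationalité géométrique, ce calcul se ramène à $H^1(G, I_G) \simeq \Z/3$ pour l'idéal d'augmentation $I_G$ de $\Z[G]$. On vérifie que $\xi$ correspond à un générateur ; la torsion $p$-primaire échappe à ce calcul, d'où $\Br(Y)/\Br(k) = \Z/3 \oplus (\text{$p$-primaire})$.

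Le principal obstacle est précisément cette dernière étape : déterminer $\Pic(\bar Z)$ avec son action galoisienne à partir de la résolution explicite, et identifier $\xi$ à la classe non nulle de $H^1(k,\Pic \bar Z)$. Sur un corps résiduel fini cette identification cohomologique est incontournable, car l'évaluation de $\xi$ aux points fermés y tombe toujours dans un groupe de Brauer nul et ne peut détecter la non-trivialité. Enfin le ``en particulier'' est formel : une variété rétracte $k$-rationnelle est universellement $CH_0$-triviale (Lemme \ref{retractCh0trivial}), et une telle variété lisse et projective vérifie $\Br(k) \iso \Br(Y)$ ; ayant exhibé une classe non nulle dans $\Br(Y)/\Br(k)$, on conclut que $X$ n'est pas rétracte $k$-rationnelle.
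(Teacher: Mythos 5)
Your first half is correct, and it genuinely differs from the paper's argument. The direct residue computation---splitting into the case where $K\otimes_{k}\kappa(\nu)$ is split (so $\chi|_{\kappa(\nu)}=0$) and the case where it is a field (so $\nu$ has a unique, $G$-invariant extension $\tilde\nu$, the norm side has valuation $3\tilde\nu(\eta)$, and your case analysis forces $3\mid\nu(x/y)$)---does show $\xi\in\Br_{\nr}(k(X)/k)\subset \Br(Y)$. Two points to tighten: valuations of homogeneous coordinates are not defined, so work with $t=x/y$ and the relation $\Norm_{K/k}(\eta/y)=t(t-1)$ in $k(X)$; and in the non-split case you must rule out that $\tilde\nu$ is ramified over $\nu$ (if $e=3$ the norm gives no divisibility), which follows because $\tilde\nu$ is trivial on $K$, so $K$ embeds in the residue field of $\tilde\nu$, forcing $f=3$, $e=1$. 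The paper obtains unramifiedness by a different route: a valuative argument showing that the divisor of $y/x$ on any smooth model is a norm from $Y_{K}$, combined with the smooth $k$-point and \cite[Lemmes 14 et 15]{CTS}. Your route is more self-contained.

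The genuine gap is the second half, and you have named it yourself: nothing in your text proves that $\xi$ is non-constant, nor that the prime-to-$p$ part of $\Br(Y)/\Br(k)$ is exactly $\Z/3$. Both are reduced to ``d\'eterminer $\Pic(\bar Z)$ avec son action galoisienne'' and ``identifier $\xi$ \`a un g\'en\'erateur de $H^1(G,\Pic \bar Z)$'', which you assert (``ce calcul se ram\`ene \`a $H^1(G,I_G)$'', ``on v\'erifie que'') but do not carry out; for the resolution of Th\'eor\`eme \ref{padiquestable} this is not routine, since the three nodes are not factorial (so $\Pic(\bar Z)$ is not simply spanned by the hyperplane class and one class per exceptional component), one must check that all of $\Pic(\bar Z)$ is defined over $K$, and one must trace $\xi$ through the Hochschild--Serre map. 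The paper does precisely this missing work, but in a way that avoids projective models: it works with the smooth locus $U\subset X$, complement of the three conjugate singular points. Using the split equation $uvw=xy(x-y)$ and the fibration given by $x$ over $\G_{m}$, it computes $K[U]^{\times}=K^{\times}$ and an exact sequence of $G$-r\'eseaux $0\to\Z\to\Z[G]\oplus\Z[G]\to\Pic(U_{K})\to 0$, in which $1$ maps to the divisor of $y/x$. This yields $\hat{H}^{-1}(G,\Pic(U_{K}))\simeq \hat{H}^{0}(G,\Z)=\Z/3$, generated by the class of $y/x$, and a diagram chase using Shapiro's lemma shows that $y/x$ is nontrivial in $\hat{H}^{0}(G,K(U)^{\times}/K^{\times})$: that is exactly the non-constancy of $\xi$. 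The structure statement then follows from the inclusion $\Br(Y)/\Br(k)\subset\Br(U)/\Br(k)$, the identification of the kernel of $\Br(U)/\Br(k)\to\Br(U_{K})/\Br(K)$ with $H^{1}(G,\Pic(U_{K}))\simeq\Z/3$, and the fact that $\Br(Y_{K})/\Br(K)$ is $p$-primary because $Y_{K}$ is $K$-rational. Until you supply an analogous computation (for $U$, or for your resolution $Z$), your argument establishes only $\xi\in\Br(Y)$, which is strictly weaker than the proposition.
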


\medskip
\begin{proof}

On consid\`ere d'abord le cas d\'eploy\'e, qu'on peut d\'efinir par l'\'equation homog\`ene
$$uvw=xy(x-y).$$
 Les  trois $k$-points d\'efinis par l'annulation de $x,y$ et de deux
des trois coordonn\'ees $(u,v,w)$ sont les seuls points non lisses de $X$.
Soit $U \subset X$ leur compl\'ementaire.

Soit $V$ le compl\'ementaire dans $X$ du ferm\'e d\'efini par $xy=0$.

On a $V\subset U$, le compl\'ementaire \'etant form\'e des 6 diviseurs
g\'eom\'etriquement  irr\'eductibles 
$\Delta_{u,x}$, 
$\Delta_{v,x}$, $\Delta_{w,x}$,
$\Delta_{u,y}$, $\Delta_{v,y}$, $\Delta_{w,y}$, 
o\`u par exemple $\Delta_{u,x}$
est  le diviseur sur $U$ d\'efini par $u=x=0$.

En coordonn\'ees affines $(u,v,w,x)$, la $k$-vari\'et\'e $V$
est    d\'efinie par :
$$ uvw=x(x-1),  \hskip2mm x \neq 0.$$
Soit $p = V \to \G_{m,k}$ la fl\`eche donn\'ee par la coordonn\'ee $x$.
La fibre g\'en\'erique de $p$ est un $k(x)$-tore d\'eploy\'e. Son groupe
de Picard est nul, et les fonctions inversibles sur cette fibre sont 
de forme $f(x)u^nv^m$ avec $f(x) \in k(x)$ et $n,m \in \Z$.
Les fibres de $p$ au-dessus des points ferm\'es distincts de $x=1$
 sont toutes g\'eom\'etriquement int\`egres, leur classe dans le groupe de Picard
 de $W$ est nulle. La seule fibre non int\`egre est donn\'ee par $x=1$,
 c'est la somme de trois diviseurs, chacun est principal, car d\'efini par
 $u=0$, resp. $v=0$, resp. $w=0$. Comme le groupe de Picard de $\G_{m}$
 est nul, on conclut $\Pic(V)=0$. Par ailleurs, une fonction inversible
 sur $V$ est de la forme $cx^r(x-1)^su^nv^m$ avec $c \in k^{\times}$ et $r,s,n,m \in \Z$.
 La consid\'eration de son diviseur montre que l'on a $s=n=m=0$.
 On a donc \'etabli que la suite 
  $$1 \to  k[W]^{\times}/k[U]^{\times} \to \Div_{U \setminus W}(U) \to \Pic(U) \to \Pic(V)$$
 est une suite exacte de r\'eseaux :
$$ 0 \to \Z \to  \Z \Delta_{u,y} \oplus  \Z \Delta_{v,y} \oplus \Delta_{w,y} \oplus
\Z \Delta_{u,x} \oplus  \Z \Delta_{v,x} \oplus \Z \Delta_{w,x} \to \Pic(U) \to 0,$$
la fl\`eche issue de $\Z$ envoyant $1$, classe de la fonction rationnelle $y/x \in k(U)$ sur 
$$  \Delta_{u,y} + \Delta_{v,y} + \Delta_{w,y} -
 \Delta_{u,x} - \Delta_{v,x} -\Delta_{w,x}.$$

 Soit maintenant $K/k$ extension galoisienne de corps comme dans l'\'enonc\'e.
Soit $\overline k$ une cl\^oture s\'eparable de $k$ contenant $K$. Soit $g=Gal({\overline k}/k)$
 et $h=Gal({\overline k}/K)$.
Soit $G=Gal(K/k) \simeq \Z/3$.
Les points non lisses de l'hypersurface cubique $X_{K}$ sont les trois points
d'intersection des 3 droites d\'efinies dans $x=y=0$ par 
$\Norm_{K/k}(u+v\theta + w\theta^2)=0$. 

Soit $U$ la $k$-vari\'et\'e  compl\'ementaire  de ces trois points.
Sur $K=k(\theta)$, on retrouve la situation d\'eploy\'ee, avec $K^{\times}=K[U]^{\times}$.
Soit $V$ le compl\'ementaire de $xy=0$.
Soit $D_{x,K}$ le diviseur irr\'eductible d\'efini sur $U_{K}$ 
par $u+\theta v+ \theta ^2 w=0,  x=0$
et $D_{y,K}$ le diviseur d\'efini par $u+\theta v+ \theta ^2 w=0, y=0$.
 La suite exacte
 ci-dessus donne alors une suite exacte de $G$-r\'eseaux
  $$ 1 \to K[V]^{\times}/K^{\times}  \to \Div_{U_{K} \setminus V_{K}} (U_{K}) \to  \Pic(U_{K}) \to 0. $$
soit 
 $$0 \to \Z \to \Z[G] \oplus \Z[G] \to \Pic(U_{K}) \to 0,$$
 la fl\`eche $ \Z \to \Z[G] \oplus \Z[G] \subset \Div(U_{K})$ envoyant $1$ sur la classe du diviseur de la fonction rationnelle $(y/x)$ sur son diviseur  $$div_{U_{K}}(y/x)= \Norm_{K/k}(D_{y,K}) -  \Norm_{K/k}(D_{y,K}).$$
 
 De cette suite  on d\'eduit un isomorphisme 
 $$\hat{H}^{-1}(G,\Pic(U_{K})) \simeq   \hat{H}^0(G,K[V]^{\times}/K^{\times}) =  \hat{H}^0(G,\Z)= \Z/3,$$
 le groupe $ \hat{H}^0(G,K[V]^{\times}/K^{\times})$ \'etant engendr\'e par la classe de $y/x \in k[V]^{\times}$.

On a le diagramme de suites exactes de $G$-modules

\begin{equation}
\xymatrix@C=16pt{
0 \ar[r] & K[V]^{\times}/K^{\times}   \ar[r]\ar[d] & \Div_{U_{K}\setminus V_{K}}(U_{K} )  \ar[r] \ar[d]&   \Pic(U_{K})   \ar[r] \ar[d]^{=}  & 0 \\
0 \ar[r] & K(U)^{\times}/K^{\times}   \ar[r] & \Div(U_{K})  \ar[r] &   \Pic(U_{K})   \ar[r]  & 0
}
\end{equation}

Par le lemme de Shapiro, $\hat{H}^{-1}(G,P)=0$ pour tout $G$-module de permutation $P$,
en particulier pour les $G$-modules $\Div_{U_{K}\setminus V_{K}}(U_{K}) $ et  $\Div(U_{K})$.

On a donc  les suites exactes compatibles

\xymatrix@C=16pt{
0 \ar[r] & \hat{H}^{-1}(G,\Pic(U_{K}))     \ar[r]^{\simeq}  \ar[d]^{=} &   \hat{H}^0(G, K[V]^{\times}/K^{\times}   )  \ar[d] & \\
0 \ar[r] & \hat{H}^{-1}(G,\Pic(U_{K}))   \ar[r] &  \hat{H}^0(G, K(U)^{\times}/K^{\times}) \ar[r] &  \hat{H}^0(G, \Div(U_{K})  )   
}

Il r\'esulte de ce diagramme que l'image de $y/x$ dans $ \hat{H}^0(G, K(U)^{\times}/K^{\times})= 
 \hat{H}^0(G, K(Y)^{\times}/K^{\times})$ est {\it non \'egale \`a $1$}.

Par ailleurs,   un argument valuatif 
sur l'\'equation de la vari\'et\'e $X$ montre que
 que la fonction $y/x \in k(X)^{\times}$ a sur tout mod\`ele   lisse $Y$ de $V$ son
diviseur qui est la norme d'un diviseur sur $Y_{K} $ pour l'extension $K/k$.

Observons que $X$ poss\`ede le $k$-point lisse $u=v=w=x=0, y=1$.

On conclut alors (\cite[Lemme 14 et Lemme 15]{CTS}) que la classe de l'alg\`ebre cyclique $(K/k, y/x) \in \Br (k(X))=\Br(k(Y))$
est non ramifi\'ee, et qu'elle est non constante.

On peut aussi observer  que  la suite exacte de $G$-modules attach\'ee \`a un mod\`ele projectif et lisse $Y$
donne naissance \`a la suite exacte
$$ 0 \to \hat{H}^{-1}(G,\Pic(Y_{K})) \to \hat{H}^0(G, K(Y)^{\times}/K^{\times} )  \to  \hat{H}^0(G, \Div(Y_{K})).$$
et que l'on a ainsi $\hat{H}^{-1}(G,\Pic(Y_{K}))\neq 0$,
 et donc $ H^{1}(G,\Pic(Y_{K})) \neq 0$. Comme $Y_{K}$ est $K$-rationnelle, ce qui est clair sur l'\'equation de $X$,
 on a un isomorphisme
 $ H^1(G,\Pic(Y_{K})) \simeq H^1(k,\Pic(X\times_{k}{\overline k}))$,
 o\`u $\overline k$ est une cl\^oture s\'eparable de $k$ contenant $K$.
 L\`a encore, l'existence d'un $k$-point lisse sur $X$ permet de conclure $$\Br(Y)/\Br(k) \neq 0.$$
 
 Pour \'etablir l'\'egalit\'e plus pr\'ecise $\Br(Y)/\Br(k) = \Z/3$ \`a la torsion $p$-primaire pr\`es,
 on observe que  comme $Y_{K}$ est $K$-rationnelle, le quotient   $\Br(Y_{K})/\Br(K) $ 
 est un groupe de torsion $p$-primaire, et que l'on a  l'inclusion
 $$\Br(Y)/\Br(k)  \subset \Br(U)/\Br(k).$$
 L'argument ci-dessus montre que la classe $(K/k,y/x)$ d'une part appartient
 \`a $\Br(Y)/\Br(k)$, d'autre part engendre le noyau de $\Br(U)/\Br(k) \to \Br(U_{K})/\Br(K)$,
 qui est isomorphe \`a $H^1(G,\Pic(U_{K})) \simeq \Z/3$.
 \end{proof}
 
 \medskip

\rema{
Une fois \'etabli  $K^{\times}=K[U]^{\times}$ et
$H^1(G,\Pic(U_{K}))\simeq \Z/3$,
on peut aussi \'etablir $H^1(G,\Pic(Y_{K}) )\neq 0$ en supposant qu'on conna\^{\i}t
une d\'esingularisation $Y \to X$ induisant un isomorphisme au-dessus de $U$.
On alors  la suite exacte de $G$-modules
$$ 0 \to \Div(Y_{K}\setminus U_{K}) \to \Pic(Y_{K}) \to \Pic(U_{K}) \to 0.$$
Comme $Y\setminus U$ est au-dessus des trois points singuliers  
d\'efinis sur $K$ et conjugu\'es transitivement par l'action de $G$, 
on voit que le $G$-module de permutation $\Div(Y_{K})$
est forc\'ement une somme directe d'exemplaires de $\Z[G]$.
Mais alors la suite exacte donne
$$H^1(G,\Pic(Y_{K})) \simeq H^1(G,\Pic(U_{K}))\simeq \Z/3.$$}

\medskip

\end{document}